\newcommand{\triangleA}{\stackon[0.5pt]{A}{\kern0.2em\scalebox{0.6}{$\bm{\triangle}$}}}
\newcommand{\triangleAa}{\stackon[0.5pt]{A}{\kern0.2em\scalebox{0.6}{$\bm{\triangle}$}}\,\!^{(1)}}
\newcommand{\triangleAb}{\stackon[0.5pt]{A}{\kern0.2em\scalebox{0.6}{$\bm{\triangle}$}}\,\!^{(2)}}
\newtheorem{theorem}{Theorem}[section]
\newtheorem{proposition}[theorem]{Proposition}
\newtheorem{lemma}[theorem]{Lemma}
\theoremstyle{definition}
\newtheorem{condition}[theorem]{Condition}
\newtheorem{model}[theorem]{Model}
\theoremstyle{remark}
\newtheorem{remark}[theorem]{Remark}
\numberwithin{equation}{section}
\newcommand{\R}{\mathbb{R}}
\newcommand{\N}{\mathbb{N}}
\newcommand{\eps}{\varepsilon}
\newcommand{\Eb}{\mathbb {E}}
\newcommand{\Mb}{\mathbb {M}}
\newcommand{\Sb}{\mathbb {S}}
\newcommand{\Prob}{\mathbb{P}}   
\newcommand{\Exp}{\operatorname{E}}
\newcommand{\Var}{\operatorname{Var}}
\newcommand{\argmin}{\operatornamewithlimits{\arg\min}}
\newcommand{\diff}{\mathrm{d}}
\newcommand{\kn}{k_n}
\newcommand{\knon}{k}
\newcommand{\Acp}{\mathcal A_{c+}}
\newcommand{\Ap}{\mathcal A_{+}}
\newcommand{\Arp}{\mathcal A_{r+}}
\newcommand{\Apure}{\mathcal A_{\mathrm{pure}}}
\newcommand{\supp}{\textrm{supp}}
\newcommand{\hyperpurevar}{\kappa}
\newcommand{\hypersparsity}{\bar\kappa}
\begin{document}

\title{\fontsize{16}{19} Structured linear factor models for tail dependence}

\author{
Alexis Boulin\thanks{Ruhr-Universität Bochum, Fakultät für Mathematik, 44780 Bochum, Germany. Email: \href{mailto:alexis.boulin@rub.de}{alexis.boulin@rub.de}}
~\orcidlink{0000-0003-0548-2726}
\and
Axel B\"ucher\thanks{Ruhr-Universität Bochum, Fakultät für Mathematik, 44780 Bochum, Germany. Email: \href{mailto:axel.buecher@rub.de}{axel.buecher@rub.de}}
~\orcidlink{0000-0002-1947-1617}
}

\date{\today}

\maketitle

\begin{abstract}
A common object to describe the extremal dependence of a $d$-variate random vector $\bm X$ is the stable tail dependence function $L$. Various parametric models have emerged, with a popular subclass consisting of those stable tail dependence functions that arise for linear and max-linear factor models with heavy tailed factors. The stable tail dependence function is then parameterized by a $d \times K$ matrix $A$, where $K$ is the number of factors and where $A$ can be interpreted as a factor loading matrix. We study estimation of $L$ under an additional assumption on $A$ called the `pure variable assumption'.
Both $K \in \{1, \dots, d\}$ and $A \in [0, \infty)^{d \times K}$ are treated as unknown, which constitutes an unconventional parameter space that does not fit into common estimation frameworks. We suggest two algorithms that allow to estimate $K$ and $A$, and provide finite sample guarantees for both algorithms. Remarkably, the guarantees allow for the case where the dimension $d$ is larger than the sample size $n$. The results are illustrated with numerical experiments and two case studies. 
\end{abstract}

\noindent\textit{Keywords.} Extremal direction, high dimensional estimation, multivariate heavy tails, rank-based estimation, stable tail dependence function, statistical guarantees.

\smallskip

\noindent\textit{MSC subject classifications.} 
Primary
% 62F40, %Bootstrap, jackknife and other resampling methods
62G32; %Statistics of extreme values; tail inference
% 62G20, %Asymptotic properties of nonparametric inference
% % %62G09 %Nonparametric statistical resampling methods
% % %60F17 %Functional limit theorems; invariance principles
% % 62H05; %Characterization and structure theory for multivariate probability distributions; copulas
% 62G30; %Order statistics; empirical distribution functions
Secondary
62H25.  % Factor analysis and principal components; correspondence analysis
% % 62G09.  %Nonparametric statistical resampling methods
% 62E20. %Asymptotic distribution theory in statistics

%\tableofcontents

\section{Introduction}

Extreme value statistics is concerned with analyzing and modeling rare, extreme events, with vital applications in fields like finance, insurance, hydrology, and meteorology, among others \citep{Kat02, Bei04}. 
A primary challenge is understanding tail dependence, i.e, the interconnectedness of extreme events, which is particularly complex in high dimensional settings. 
To address this complexity, recent research aimed at integrating modern statistical and machine learning techniques into the field, with a particular focus on dimension reduction \citep{ chautru2015dimension, goix2017sparse, cooley2019decompositions, drees2021principal}, clustering \citep{janssen2020k, meyer2024multivariate, medina2024spectral} or (sparse) parametric modeling and estimation in high dimensions \citep{engelke2021learning}; see also \cite{Eng21} for a recent review.

This work goes in a similar direction, and relates to several of the aforementioned concepts. The key assumption is semiparametric: we assume that, after (nonparametric) marginal standardization, the tail dependence structure of an observable random vector $\bm X$ coincides with that obtained from a heavy-tailed linear factor model; this approach is akin to factor copula modeling in general dependence modeling \citep{OhPat17}. More specifically, the heavy-tailed linear factor model relies on the assumption that a random vector $\bm{Y} =(Y_1,\dots,Y_d)^\top$ either satisfies $Y_j = \sum_{a=1}^K A_{ja}Z_a + E_j$ or $Y_j = (\bigvee_{a=1}^K A_{ja}Z_a)\vee E_j$ for all $j\in\{1,\dots,d\}$, for some factor number $K\in \N$, some nonnegative factor loading matrix $A=(A_{ja}) \in [0,\infty)^{d\times K}$, some asymptotically independent, heavy-tailed random variables $Z_a$ called factors, and some random noise variables $E_j$ whose tails are lighter than the ones of the factors. 
Building on \cite{Wan11} and \cite{einmahl2012mestimator}, the model has been used in various contexts in extremes, with notable recent advances in graphical modeling \citep{gissibl2018tail, kluppelberg2021estimating}.
Beyond extreme value analysis, linear factor models have been widely used across scientific disciplines for nearly a century, generating extensive literature.
We refer to \cite[Chapter 15]{izenman2008modern} for a survey and applications. 

The semiparametric model assumption mentioned in the the previous paragraph is equivalent to a specific parametric assumption on the stable tail dependence function $L$ of $\bm X$. In the case where $K$ is assumed to be known, the parameter of $L=L_{\bar A}$ is a $d \times K$ loading matrix $\bar A$ which has non-zero coloumn sums and row sums equal to 1. 
Estimating that matrix turns out to be challenging, partly because of the lack of differentiability of $L_{\bar A}$. Existing approaches are based on minimizing suitable distances between $L_{\bar A}$ and the empirical stable tail dependence function $\hat L$ \citep{einmahl2012mestimator, einmahl2018continuous}, on connecting clustering methods to the parametric model \citep{janssen2020k, medina2024spectral} or on exploiting a relationship of $\bar A$ to the tail pairwise dependence matrix \citep{kiriliouk2022estimating}. However, all these methods rely on the assumption that $K$ is known, which is hardly the case in practice. 
Moreover, the methods often lack statistical guarantees,
in particular for the case where the dimension $d$ (and possibly the unknown number of factors $K$) is large and potentially exceeding the (effective) sample size; such situations may for instance arise in spatial or financial applications (e.g., when assessing tail dependence between the $d=500$ constituents of the S\&P500-index). These limitations are ongoing challenges in the field, and this paper offers partial steps toward addressing them on mathematical, modeling, and methodological fronts.

The contributions of this work are the following. 
Inspired by recent advances on statistical guarantees for high-dimensional linear factor models in the non-extreme setting in \cite{Bin20},
we prove identifiability of both the number of latent factors $K$ and the loading matrix $\bar{A}$ from pairwise extremal dependence measures under a condition called the ``pure variable assumption'', with uniqueness up to permutation. Building on this theoretical foundation, we develop a tailored estimator $\triangleA$ for the structured matrix $\bar{A}$ that depends on 
three interpretable tuning parameters: a threshold-exceedance number $k \in \mathbb{N}_{\geq 2}$, a latent factor control parameter $\hyperpurevar \in (0,1/2)$, and a sparsity parameter $\hypersparsity\in (0,1/2)$, with $\hypersparsity = \hyperpurevar$ being a reasonable choice.
As an important by-product, the methods allow for estimating extremal directions \citep{Sim20} and, hence, for soft-clustering of the variables.
Our estimation approach mirrors the constructive techniques used in our identifiability proof, and we provide non-asymptotic bounds ``in probability'' for both the exact recovery of $K$ and the estimation error between $\triangleA$ and the true $\bar{A}$. Moreover, we prove almost sure large-sample consistency for the case where $d=d_n$, $K = K_n$ may depend on $n$ and actually be larger than $n$, with properly specified tuning parameters. Extensive simulations confirm our theoretical results and showcase the estimator's practical performance. Finally, two case studies illustrate our methods: one where the model fits well, and another revealing potential limitations.

The remainder of this paper is organized as follows. Section \ref{subsec:tail-dependence} provides mathematical preliminaries on tail dependence and multivariate regular variation, followed by an extensive discussion of the tail dependence properties induced by a linear factor model in Section~\ref{subsec:linear-factor-model}. In Section \ref{sec:statistics-stdf-pure-variable}, we propose algorithms for estimating the model parameters and provide statistical guarantees for both finite samples and asymptotic regimes. The results of a Monte Carlo simulation study and two case studies are presented in Section \ref{sec:simulations} and \ref{sec:case_studies}, respectively, followed by a conclusion and outlook in Section~\ref{sec:conclusion}. 
All proofs are deferred to Sections \ref{sec:proofs}-\ref{sec:additional}, and some additional numerical results are presented in Sections~\ref{sec:additional-simulation-results} and \ref{sec:supplement_case_studies}.

\section{Mathematical Preliminaries}
\label{sec:preliminaries}

\subsection{Tail dependence and regular variation on the non-negative orthant}\label{subsec:tail-dependence}

Tail dependence of a $d$-variate random vector $\bm X=(X_1, \dots, X_d)^\top$ with continuous marginal cdfs $F_1, \dots, F_d$ is typically assessed by a standardization of the margins. A key object is the \emph{stable tail dependence function}  $L:[0,\infty)^d \to [0, \infty)$, which is defined by
\begin{align} 
\label{eq:def-stable-tail}
        L(\bm{x}) &= \lim_{n \to \infty} n \mathbb{P}\left\{ F_1(X_1) > 1-\frac{x_1}{n} \textrm{ or } \dots \textrm{ or } F_d(X_d) > 1-\frac{x_d}{n} \right\},
\end{align}
provided the limit exists (which we implicitly assume throughout this paper). Often, attention is restricted to simple bivariate coefficients like the \emph{tail correlation between $X_j$ and $X_\ell$}, which is defined, for $j, \ell \in [d]$, by
\begin{align*} 
\chi(j,\ell) = \lim_{n \to \infty} n \mathbb{P}\left\{ F_j(X_j)>1-1/n, F_\ell(X_\ell)>1-1/n\right\} = \begin{cases} 2 - L(\bm e_j + \bm e_\ell), & j \ne \ell, \\ 1, & \text{else}, \end{cases}
\end{align*}
where $\bm e_j$ denotes the $j$th unit vector in $\R^d$. 
These coefficients will be collected in a matrix $\mathcal X=(\chi(j,\ell))_{j,\ell\in[d]} \in [0,1]^{d \times d}$; here and throughout, we write $[n]=\{1, \dots, n\}$ for an integer $n$.

Existence of the stable tail dependence function $L$ in \eqref{eq:def-stable-tail} can equivalently be expressed through the concept of regular variation on the non-negative orthant $[0,\infty)^d$ of $\mathbb{R}^d$, as formally stated in Lemma~\ref{lemma:stable-tail-dependence-spectral-measure} below. Write $\mathbb{E}_0 = [0,\infty)^d \setminus \{\bm{0}\}$, which is a Polish space.  
Let $\mathbb{M}(\mathbb{E}_0)$ be the set of Borel measures on $\mathbb{E}_0$ which are finite on sets bounded away from zero. We denote by $\mathcal{C}(\mathbb{E}_0)$ the set of continuous, bounded, positive functions on $\mathbb{E}_0$ whose supports are bounded away from zero. For $\nu_n, \nu \in \mathbb{M}(\mathbb{E}_0)$, we say $\nu_n \rightarrow \nu$ in $\mathbb{M}(\mathbb{E}_0)$ if $\int f d \nu_n \rightarrow \int f d\nu$ for all $f \in \mathcal{C}(\mathbb{E}_0)$ (see Section 2 in \citealp{hult2006regular}, Section 1.4.1.2 in \citealp{resnick2024art} or Definition B.1.16 in \citealp{kulik2020heavy}, where the convergence is referred to as vague$^\#$-convergence with the boundedness consisting of all subsets of $\mathbb{E}_0$ that are bounded away from zero). A random vector $\bm{Y} \in [0,\infty)^d$ is regularly varying  on $\mathbb{E}_0$ if there exists a sequence $c_n \rightarrow \infty$ and a non-degenerate measure $\nu_{\bm{Y}} \in \mathbb{M}(\mathbb{E}_0)$, called the \emph{exponent measure associated with $(c_n)_n$}, such that, as $n \rightarrow \infty$,
\begin{equation}
    \label{eq:regular_variation}
    n \mathbb{P}\left\{ c_n^{-1}\bm{Y} \in \cdot \right\} \rightarrow \nu_{\bm{Y}}(\cdot), \quad \textrm{in } \mathbb{M}(\mathbb{E}_0)
\end{equation}
(Definition B.2.1 in \citealp{kulik2020heavy}). In this case, Theorem B.2.2 in \cite{kulik2020heavy} implies that
\begin{compactenum}[(a)]
    \item there exists an $\alpha>0$, called the tail index of $\bm{Y}$, such that $(c_n)_n$ is regularly varying of index $\alpha^{-1}>0$ and such that the limit measure $\nu_{\bm{Y}}$ satisfies the homogeneity property $\nu_{\bm{Y}}(tB) =t^{-\alpha} \nu_{\bm{Y}}(B)$ for all constants $t > 0$ and all Borel sets $B \subset \mathbb{E}_0$ bounded away from zero;
    \item the scaling sequence $(c_n)_n$ and the associated exponent measure $\nu_{\bm Y}$ are unique up to a factor in the sense that, if \eqref{eq:regular_variation} holds for another choice $(c_n', \nu_{\bm Y}')$, then there exists a constant $\xi>0$ such that $\nu_{\bm Y}= \xi \nu_{\bm Y}'$ and $\lim_{n \to \infty}c_n'/c_n= \xi^{1/\alpha}$;
    \item the above definition of regular variation is equivalent to the one given in Definition  2.1 in \cite{resnick2024art}.
\end{compactenum}
Next, fix a norm $\| \cdot\|$ on $\R^d$, let $\mathbb{S}_+^{d-1} = \{ \bm{y} \in [0, \infty)^d : \|\bm{y}\| =1\}$ and consider the bijective transform $T : \mathbb{E}_0 \mapsto (0,\infty) \times \mathbb{S}_+^{d-1}$ defined by
\[
    T(\bm{y}) = \Big(\| \bm{y} \|, \frac{\bm{y}}{\|\bm{y}\|} \Big),
\]
whose inverse $T^{-1} : (0,\infty) \times \mathbb{S}_+^{d-1} \mapsto \mathbb{E}_0$ satisfies $T^{-1}(r,\bm{\lambda}) = r \bm{\lambda}$. The homogeneity of $\nu_{\bm X}$ implies that, for any $r>0$ and any Borel set $A$ in $\mathbb{S}_+^{d-1}$,
\begin{align} \label{eq:relation-exponent-spectral-measure}
\nu_{\bm Y} \circ T^{-1} \big( (r,\infty) \times A \big)
&=
\nu_{\bm Y} \Big( \Big\{ \bm y \in \mathbb E_0: \| \bm y \| > r, \frac{\bm y}{\| \bm y\|} \in A \Big\} \Big)
\\&=\nonumber
r^{- \alpha}\nu_{\bm Y} \Big( \Big\{ \bm y \in \mathbb E_0: \| \bm y \| > 1, \frac{\bm y}{\| \bm y\|} \in A \Big\} \Big)
= \varsigma \cdot (\nu_\alpha \otimes \Phi_{\bm Y}) \big((r,\infty) \times   A \big),
\end{align}
where $\varsigma$ is the constant $\varsigma = \varsigma^{\| \cdot\|}  = \nu_{\bm Y} ( \{ \bm y \in \mathbb E_0: \| \bm y \| > 1 \})$,
$\nu_\alpha$ is the measure on $(0,\infty)$ defined by $\nu_\alpha(dr) = \alpha r^{-\alpha-1} dr$, 
\begin{align}
\label{eq:def-spectral-measure-new}
\Phi_{\bm Y}(A) := \Phi_{\bm Y}^{\|\cdot\|}(A)  := \varsigma^{-1} \nu_{\bm Y} \Big( \Big\{ \bm y \in \mathbb E_0: \| \bm y \| > 1, \frac{\bm y}{\| \bm y\|} \in A \Big\} \Big)
\end{align}
is the \emph{spectral measure of $\bm Y$ associated with the norm $\| \cdot \|$}, and $\otimes$ is the product measure. Note that the spectral measure is a probability measure on $\mathbb{S}_+^{d-1}$, and that, by property (a) above, it does not depend on the sequence $(c_n)_n$; see also Theorem 2.2.1 in \cite{kulik2020heavy}.

The claimed equivalence at the beginning of the previous paragraph can now be formally stated as follows; see also Chapter 6.1 in \cite{Res07} or Chapter 5.4.2 in \cite{Resnick1987} for similar statements based on a slightly different definition of regular variation.
For completeness, we give a proof in the appendix.

\begin{lemma} \label{lemma:stable-tail-dependence-spectral-measure}
Suppose $\bm X$ is a $d$-variate random vector with continuous marginal cdfs $F_1, \dots, F_d$. Then, the stable dependence function $L$ of $\bm X$ exists if and only if the random vector $\bm Y=(Y_1, \dots, Y_d)^\top$ defined by $Y_j=1/(1-F_j(X_j))$ is regularly varying. In that case:
\begin{compactenum}[(i)]
    \item the index of regular variation of $\bm Y$ is $1$;
    \item the scaling sequence $(c_n)_n$ can be chosen as $c_n=n$, and the exponent measure $\nu_{\bm Y}$ associated with $(c_n)_n$ is related to $L$ by $L(\bm x) = \nu_{\bm Y}([0, \bm 1/ \bm x]^c)$ for all $\bm x \in [0, \infty)^d$, where $\bm 1/\bm x$ has coordinates $1/x_j$;
    \item the spectral measure $\Phi_{\bm Y}$ of $\bm Y$ associated with the $\| \cdot \|$-norm is related to $L$ by
        \begin{align*} 
        L(\bm x) 
        = \varsigma
         \int_{\mathbb S_{+}^{d-1}} \max_{j \in [d]} \big( \lambda_j x_j \big)\, \diff  
        \Phi_{\bm Y}(\bm \lambda), \qquad \bm x \in [0,\infty)^d,
    \end{align*}
    with $\varsigma=\lim_{y \to \infty} y \Prob(\| \bm Y \|> y) = \nu_{\bm Y}(\{ \bm y : \| \bm y \| > 1 \})$, where $\nu_{\bm Y}$ is from (ii). For the $\| \cdot \|_\infty$-norm, we have $\varsigma =L(\bm 1)$, and for the $\| \cdot \|_1$-norm, we have $\varsigma=d$. Moreover, $\Phi_{\bm Y}$ is uniquely determined by $L$. 
    \end{compactenum}
\end{lemma}

Finally, inspired by \cite{Sim20} and \cite{Mou25}, a non-empty subset $D \subset [d]$ will be called an \textit{extremal direction of $\bm X$} if the spectral measure $\Phi_{\bm Y}$ of $\bm Y$ from Lemma~\ref{lemma:stable-tail-dependence-spectral-measure} satisfies $\Phi_{\bm Y}(\Sb_D)>0$, where $\Sb_D = \{ \bm x \in \Sb_{+}^{d-1}: x_j>0 \text{ iff } j \in D\}$.

\subsection{Linear factor models and the pure variable assumption}
\label{subsec:linear-factor-model}

For $K\in \N$, let $\Acp(K)$ and $\Arp(K)$ denote the set of matrices in $[0,\infty)^{d \times K}$ with non-zero column sums and non-zero row sums, respectively, and define $\Ap(K) = \Acp(K) \cap \Arp(K)$. Moreover, write $A_{\cdot a} \in \R^d$ for the $a$th column of $A$ and $A_{j\cdot} \in \R^K$ for the $j$th row of $A$.
The central model assumption in this paper is the following specific parametric model for the stable tail dependence function.

\begin{condition}[Max-linear Stable Tail Dependence Function]
\label{cond:stdf-max-linear}
The random vector $\bm X=(X_1, \dots, X_d)^\top$  has continuous marginal distribution functions $F_1, \dots, F_d$ and its stable tail dependence function $L$ exists and satisfies
\begin{align} \label{eq:stable-tail-dp-A}
    L(\bm x) = L_{K, \bar A}(\bm x) = \sum_{a \in [K]} \bigvee_{j \in [d]} \bar A_{ja} x_j, \qquad \bm x \in [0, \infty)^d,
\end{align}
for some unknown parameter $\theta = (K, \bar A) \in \Theta_L \subset \Theta$, where $\Theta=\big\{ (K,\bar A) \mid K \in [d], \bar A \in \Ap(K) \text{ has row sums } 1\big\}$ (the set $\Theta_L$ will be specified below). 
\end{condition}

In the remaining parts of this section, we will examine the model assumption in detail. In particular, we show that $L$ is indeed a stable tail dependence function, and that its parameter matrix $\bar A$ may be interpreted as a factor loading matrix in certain heavy tailed (max-) linear models. Note that this is akin to the interpretation of factor copula models \citep{OhPat17}. 

We start by recalling linear and max-linear factor models, and then show that their stable tail dependence function is of the form given in \eqref{eq:stable-tail-dp-A}.

\begin{model}[Linear Factor Model with Noise] \label{cond:factormodel}
The random vector $\bm Y \in [0,\infty)^d$ is said to follow a linear factor model with noise if there exists $\alpha>0$, $K \in \N$ and $A \in [0,\infty)^{d\times K}$ such that
\begin{align} \label{eq:factormodel}
\bm Y =_d A \bm Z + \bm E
\end{align}
for some random vectors $\bm Z=(Z_1, \dots, Z_K)^\top \in [0, \infty)^K$ and $\bm E=(E_1, \dots, E_d)^\top \in [0, \infty)^d$ that satisfy
\begin{compactenum}[(i)]
    \item $Z_1, \dots, Z_K$ are identically distributed, regularly varying of order $\alpha>0$, i.e., the exists a scaling sequence $c_n\to\infty$ such that, for all $a \in [K]$, $n \mathbb{P}\left\{ c_n^{-1} Z_a \in \cdot \right\} \rightarrow \nu_\alpha(\cdot)$ in $\mathbb{M}((0,\infty))$. 
    Moreover, $Z_1, \dots, Z_K$ are pairwise asymptotically independent, meaning that, for $a,b\in[K]$ with $a \ne b$
    \[
        n \mathbb{P}\left\{ Z_a > c_n x_a, Z_b > c_n x_b  \right\} \rightarrow 0, \quad \textrm{ for all } x_a, x_b > 0.
    \]
    \item $\bm E$ has a lighter tail than $\bm{Z}$, i.e., $\mathbb{P}\{ \|\bm{Z}\| > x \} / \mathbb{P}\{ \|\bm{E}\| > x \} = o(1)$ as $x \rightarrow \infty$.
\end{compactenum}
\end{model}

\begin{remark}\label{rem:fix-scale}
Under the above model assumption, the matrix $A$ is clearly not identifiable (not even up to column permutations). Indeed, if \eqref{eq:factormodel} holds with $\bm Z$ and $\bm E$ as in (i) and (ii), then we also have $\bm X=_d A' \bm Z' + \bm E$ with $A'=2A$ and $\bm Z'=\bm Z/2$, and $\bm Z'$ and $\bm E$ also satisfy (i) and (ii). To overcome this issue, the scale of $Z_i$ needs to be fixed. The following assumption is sufficient:

\smallskip
\begin{compactenum}[(i)']
    \item $Z_1, \dots, Z_K$ are identically distributed with tail function $\bar F_{Z}$ satisfying $\bar F_Z(x) = \ell(x) x^{-\alpha}$ for some function $\ell$ for which $\lim_{x \to \infty} \ell(x)=1$.
    Moreover, $Z_1, \dots, Z_K$ are pairwise asymptotically independent.     \label{cond_(i)'}
\end{compactenum}
\smallskip

\noindent Note that assuming $\bar F_Z(x) = \ell(x) x^{-\alpha}$ for some slowly varying function $\ell$ is equivalent to regular variation of $Z_i$ with index $\alpha$. Functions whose limit $\lim_{x\to\infty} \ell(x)$ exist are always slowly varying; in that sense, (i)' implies (i), but not vice versa. By fixing the limit to one, we essentially fix the scale of $Z_i$ as required.
\end{remark}

Slightly rephrasing \cite[Proposition 2.1.8]{kulik2020heavy} or \cite[Proposition 2.2]{resnick2024art}, the assumption on $\bm{Z}$ in (i) implies that $\bm{Z}$ is regularly varying with tail index $\alpha$ and exponent measure $\nu_{\bm{Z}}$ associated with $(c_n)_n$ given by
\begin{equation}
    \label{eq:Lambda_z}
    \nu_{\bm{Z}} = \sum_{a \in [K]} \delta_0^{\otimes(a-1)} \otimes \nu_\alpha \otimes \delta_0^{\otimes(K-a)},
\end{equation}
where $\delta_0$ is the Dirac measure at zero, where $\otimes$ is the product measure, and where $\mu^{\otimes n}$ is the $n$-fold product measure of $\mu$ with itself (for $n = 0$, the factor is simply omitted from the previous display).

\begin{model}[Max-Linear Factor Model with Noise] \label{cond:max-factormodel} 
The random vector $\bm Y \in [0,\infty)^d$ is said to follow a max-linear factor model if there exists $\alpha>0$, $K \in \N$ and $A \in [0,\infty)^{d\times K}$ such that
\begin{equation*}
    \bm Y =_d (A \times_{\max} \bm Z)\vee \bm{E} := \left( \Big(\max_{a=1}^K A_{1a} Z_a\Big) \vee E_1, \dots, \Big(\max_{a=1}^K A_{da} Z_a\Big) \vee E_d \right)^\top,
\end{equation*}
for some random vectors $\bm Z=(Z_1, \dots, Z_K)^\top \in [0, \infty)^K$ and $\bm E=(E_1, \dots, E_d)^\top \in [0, \infty)^d$ that satisfy (i) and (ii) from Model~\ref{cond:factormodel}.
\end{model}

Both Model~\ref{cond:factormodel} and Model~\ref{cond:max-factormodel} define regular varying random vectors whose stable tail dependence function is as in \eqref{eq:stable-tail-dp-A}.

\begin{proposition}
\label{prop:spectral_measure_max_lin}
Suppose $\bm Y$ is either as in Model~\ref{cond:factormodel} or as in Model~\ref{cond:max-factormodel}, with $A \in \Acp(K)$. 
Then $\bm Y$ is regularly varying of index $\alpha$ with spectral measure
\begin{align} \label{eq:Phi_A}
\Phi_{\bm{Y}}(\cdot) 
= 
\Phi_{A,\alpha} (\cdot)
= 
\varsigma^{-1} \sum_{a\in[K]} \| A_{\cdot a} \|^\alpha \delta_{A_{\cdot a}/ \| A_{\cdot a}\|}(\cdot),
\end{align}
where $\varsigma=\varsigma_{A, \alpha} =\sum_{a \in [K]} \| A_{\cdot a}\|^\alpha$.
If, additionally, $\bm Y$ has continuous marginal distribution functions $F_1,\dots,F_d$ and if $A \in \Arp(K)$, writing $\bar A = f(A, \alpha)$ for the matrix with entries
\begin{align} \label{eq:barA}
    \bar A_{ja} = \frac{A_{ja}^\alpha}{\sum_{b=1}^K A_{j b}^\alpha} =  \frac{A_{ja}^\alpha}{s_{j}^\alpha}, \qquad s_j=s_j(\alpha, A) := \Big(\sum_{a\in[K]} A_{ja}^\alpha\Big)^{1/\alpha}
\end{align}
(note that $\bar A$ has row sums 1),
the stable tail dependence function of $\bm Y$ satisfies $L =L_{K, \bar A}$ with $L_{K, \bar A}$ from \eqref{eq:stable-tail-dp-A}.
\end{proposition}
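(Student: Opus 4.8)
The statement has two parts — the spectral-measure formula \eqref{eq:Phi_A} and the identity $L=L_{K,\bar A}$ — and I would establish them in that order. For the spectral measure, the plan is first to strip off the noise. Both $\bm Y_0:=A\bm Z$ and $\bm Y_0:=A\times_{\max}\bm Z$ are images of $\bm Z$ under a map $T:[0,\infty)^K\to[0,\infty)^d$ that is continuous, positively homogeneous of degree $1$, and \emph{proper} on the orthant in the sense that $T(\bm z)=\bm 0$ forces $\bm z=\bm 0$: indeed $\sum_a A_{ja}z_a=0$ (resp.\ $\max_a A_{ja}z_a=0$) for all $j$ forces $A_{ja}z_a=0$ for all $j,a$ by nonnegativity, so $z_a>0$ would empty column $a$, contradicting $A\in\Acp(K)$. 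Combining the description \eqref{eq:Lambda_z} of $\nu_{\bm Z}$ with the continuous-mapping theorem for regular variation (e.g.\ \citealp{hult2006regular}) then shows $\bm Y_0$ is regularly varying of index $\alpha$ with the same scaling $(c_n)_n$ and exponent measure $\nu_{\bm Y_0}=\nu_{\bm Z}\circ T^{-1}$; since $\nu_{\bm Z}$ is carried by the coordinate axes and $T(s\bm e_a)=sA_{\cdot a}$ for both maps, this is the measure $B\mapsto\sum_{a\in[K]}\nu_\alpha(\{s>0:sA_{\cdot a}\in B\})$. Inserting this into \eqref{eq:def-spectral-measure-new} — each ray $\{sA_{\cdot a}:s>0\}$ contributes $\nu_\alpha((1/\|A_{\cdot a}\|,\infty))=\|A_{\cdot a}\|^\alpha$ to the set $\{\|\bm y\|>1,\ \bm y/\|\bm y\|\in\mathcal A\}$ precisely when $A_{\cdot a}/\|A_{\cdot a}\|\in\mathcal A$ — gives $\varsigma=\sum_a\|A_{\cdot a}\|^\alpha$ and $\Phi_{\bm Y_0}=\Phi_{A,\alpha}$. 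Then I add the noise back by a ``regular-variation Slutsky'' step: since $\|\bm Z\|$ is regularly varying of index $\alpha$ and $\bm E$ has lighter tails (condition~(ii)), one has $n\,\Prob\{\|\bm E\|>\eps c_n\}\to 0$ for every $\eps>0$; a standard enlargement/shrinkage argument on bounded-away-from-zero $\nu_{\bm Y_0}$-continuity sets shows that $\bm Y_0+\bm E$ — and, via the sandwich $\bm Y_0\le\bm Y_0\vee\bm E\le\bm Y_0+\bm E$ and the monotonicity of the sets $[0,\bm v]^c$, also $\bm Y_0\vee\bm E$ — is regularly varying with the same scaling, exponent measure $\nu_{\bm Y}:=\nu_{\bm Y_0}$, and spectral measure $\Phi_{A,\alpha}$. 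This proves the first claim.

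For the identity $L=L_{K,\bar A}$ I now add continuity of $F_1,\dots,F_d$ and $A\in\Arp(K)$; the latter gives $s_j=(\sum_a A_{ja}^\alpha)^{1/\alpha}>0$ for all $j$, so each $Y_j$ is itself regularly varying of index $\alpha$ with $\nu_{\bm Y}(\{y_j>u\})=s_j^\alpha u^{-\alpha}$ (read off the explicit $\nu_{\bm Y}$; equivalently $\bar F_j\in\RV_{-\alpha}$). Because the stable tail dependence function is a copula functional, $L$ equals the stable tail dependence function of the standardized vector $\widetilde{\bm Y}$ with $\widetilde Y_j=1/(1-F_j(Y_j))$, which has standard Pareto margins. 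Writing $v_j(t):=\bar F_j^{\leftarrow}(x_j/t)$ and using regular variation of $\bm Y$ together with the marginal tail asymptotics, $v_j(t)/b(t)\to s_j x_j^{-1/\alpha}$ for a scaling function $b\in\RV_{1/\alpha}$ of $\bm Y$ — the slowly varying marginal corrections cancel in these ratios, so this holds already under the weaker condition~(i). Hence, with $\bm w(t):=(v_j(t)/b(t))_j\to\bm w^\ast:=(s_j x_j^{-1/\alpha})_j$,
\begin{align*}
L(\bm x)
&=\lim_{t\to\infty} t\,\Prob\{\exists\, j:\ F_j(Y_j)>1-x_j/t\}
=\lim_{t\to\infty} t\,\Prob\big\{b(t)^{-1}\bm Y\in[0,\bm w(t)]^c\big\}
=\nu_{\bm Y}\big([0,\bm w^\ast]^c\big)
\\&=\sum_{a\in[K]}\Big(\max_{j\in[d]} A_{ja}/w^\ast_j\Big)^{\!\alpha}
=\sum_{a\in[K]}\max_{j\in[d]}\frac{A_{ja}^\alpha}{s_j^\alpha}\,x_j
=\sum_{a\in[K]}\bigvee_{j\in[d]}\bar A_{ja}x_j
=L_{K,\bar A}(\bm x),
\end{align*}
where the passage to the limit is justified by squeezing $[0,\bm w(t)]^c$ between the fixed sets $[0,\bm w^\ast\pm\eps\bm 1]^c$, using that $\nu_{\bm Y}$, being supported on the finitely many rays through the $A_{\cdot a}$, does not charge $\partial[0,\bm w^\ast]^c$, and letting $\eps\downarrow0$. (Alternatively, under~(i)$'$ one may show $\widetilde{\bm Y}$ is tail-equivalent to $g(\bm Y)$ with $g(\bm y)=(y_j^\alpha/s_j^\alpha)_j$, a proper degree-$\alpha$ homogeneous map, push $\nu_{\bm Y}$ forward, and apply Lemma~\ref{lemma:stable-tail-dependence-spectral-measure}(ii).)

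I expect the last part to be the main obstacle. One cannot simply substitute $\Phi_{A,\alpha}$ into the representation of Lemma~\ref{lemma:stable-tail-dependence-spectral-measure}(iii), because that formula uses the spectral measure of the \emph{marginally standardized} vector, which is not $\Phi_{A,\alpha}$ unless $\alpha=1$ and all $s_j$ coincide. The real work is to track the standardization $Y_j\mapsto 1/(1-F_j(Y_j))$ carefully — this is exactly where the normalization $\bar A_{ja}=A_{ja}^\alpha/s_j^\alpha$ with row sums $1$ materializes, through raising $A_{ja}/s_j$ to the power $\alpha$ — and to verify that the slowly varying parts of the marginal tails (nontrivial under~(i), trivial under~(i)$'$) do not affect the ratios $v_j(t)/v_\ell(t)$, so that the scaling needed for Lemma~\ref{lemma:stable-tail-dependence-spectral-measure}(ii) is undisturbed. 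The pushforward computation and the noise step, by contrast, are routine within the regular-variation framework already set up in the paper.
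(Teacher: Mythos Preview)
Your proposal is correct and follows essentially the same route as the paper. For the spectral measure you do exactly what the paper does: apply the continuous mapping theorem for regular variation to $T(\bm z)=A\bm z$ or $A\times_{\max}\bm z$ (the paper cites Theorem~B.1.21 in \cite{kulik2020heavy}), then remove the noise via a Slutsky-type lemma (the paper's Lemma~\ref{lemma:vanishing_noise}, which proves joint regular variation of $(\bm Y_0,\bm E)$ and pushes forward; your sandwich for the max-case is a valid shortcut). For the stable tail dependence part, the paper packages your quantile-ratio computation $v_j(t)/b(t)\to s_j x_j^{-1/\alpha}$ into an auxiliary result, Lemma~\ref{lemma:stable-tail-dependence-spectral-measure-old}, which expresses $L$ directly through $\Phi_{\bm Y}$ and the marginal constants $t_j=\kappa_j/\varsigma$ with $\kappa_j=s_j^\alpha$; plugging in $\Phi_{A,\alpha}$ then gives $L_{K,\bar A}$ in one line. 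Your direct argument and the proof of that lemma are the same computation, so the difference is purely organizational --- your anticipated ``main obstacle'' is precisely what Lemma~\ref{lemma:stable-tail-dependence-spectral-measure-old} is there to absorb.
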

 
Among other things, Proposition~\ref{prop:spectral_measure_max_lin} shows that the parameter matrix $\bar A$ from Condition~\ref{cond:stdf-max-linear} may indeed by interpreted as a factor loading matrix for tail dependence in the following sense: the same tail dependence function $L_{K, \bar A}$ also arises in a (max-)linear factor model as in Model~\ref{cond:factormodel} or \ref{cond:max-factormodel}, with factor loading matrix $A=\bar A$ and with tail index $\alpha=1$. This follows from the fact that $\bar A=f(A, \alpha)$ defined in \eqref{eq:barA} satisfies $\bar A = f(\bar A,1)$.

We collect some further properties that can be deduced from Condition~\ref{cond:stdf-max-linear}.

\begin{lemma} \label{lem:tail-probabilities-maxlinear-stdf}
Suppose $\bm X$ satisfies Condition~\ref{cond:stdf-max-linear}, let $\bm x \in [0,\infty)^d$ and let $\mathcal J$ be a non-empty set of non-empty subsets of $[d]$. Then
\begin{align*}
R^\cup_{\mathcal J}(\bm x) \equiv  \lim_{n \to \infty} 
n
\mathbb P \Big (\exists\, J \in \mathcal J\,  \forall j \in J: F_j(X_j) > 1-\frac{x_j}n \Big)
&=
\sum_{a \in [K]} \bigvee_{J \in \mathcal J} \bigwedge_{j \in J} \bar A_{ja}{x_j},
\\
L^\cap_{\mathcal J}(\bm x) \equiv  \lim_{n \to \infty} 
n
\mathbb P \Big (\forall\, J \in \mathcal J\,  \exists j \in J: F_j(X_j) > 1-\frac{x_j}n \Big)
&=
\sum_{a \in [K]} \bigwedge_{J \in \mathcal J} \bigvee_{j \in J} \bar A_{ja}{x_j}.
\end{align*}
In particular, for all $\varnothing \ne J \subset [d]$, the tail copula and the stable tail dependence function of the subvector $\bm X_J=(X_j)_{j \in J}$ exist and satisfy
\begin{align*}
R_J(\bm x_J)  
&\equiv 
\lim_{n \to \infty} n \Prob\Big( \forall j \in J: F_j(X_j) > 1-\frac{x_j}n \Big) =  \sum_{a \in [K]}  \bigwedge_{j \in J} \bar A_{ja} x_j,
\\
L_J(\bm x_J) 
&\equiv 
\lim_{n \to \infty} n \Prob\Big( \exists j \in J: F_j(X_j) > 1-\frac{x_j}n \Big) = 
\sum_{a \in [K]}  \bigvee_{j \in J} \bar A_{ja} x_j.
\end{align*}
Moreover,  for each $j,\ell \in [d]$ with $j \ne \ell$, the tail correlation between $X_j$ and $X_\ell$ satisfies
\begin{align} \label{eq:chiA}
        \chi(j,\ell) = 
        R_{\{j,\ell\}}(1,1) = \sum_{a\in[K]} \bar{A}_{ja} \wedge \bar{A}_{\ell a}, \qquad  j \ne \ell.
\end{align}
Finally, $\bm X$ has at most $K$ extremal directions, and these are given by $D_1, \dots, D_K$, where
\begin{equation} \label{eq:softcluster}
D_a := D_a(A) := \big\{ j \in [d]: A_{ja}>0 \big\} \subseteq [d], \qquad a \in [K].
\end{equation}
\end{lemma}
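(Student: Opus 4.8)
The plan is to establish Lemma~\ref{lem:tail-probabilities-maxlinear-stdf} in four stages, each building on the previous one, with the first identity $R^\cup_{\mathcal J}$ being the main computational step from which everything else follows by specialization or a short additional argument.

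\textbf{Step 1: the master formula for $R^\cup_{\mathcal J}$.} Writing $\bm Y=(Y_1,\dots,Y_d)^\top$ with $Y_j=1/(1-F_j(X_j))$ as in Lemma~\ref{lemma:stable-tail-dependence-spectral-measure}, the event $\{F_j(X_j)>1-x_j/n\}$ equals $\{Y_j>n/x_j\}$, so $n\Prob(\exists J\in\mathcal J\ \forall j\in J: Y_j>n/x_j)$ converges to $\nu_{\bm Y}(B_{\mathcal J}(\bm x))$ where $B_{\mathcal J}(\bm x)=\bigcup_{J\in\mathcal J}\{\bm y\in\mathbb E_0: y_j>1/x_j\ \forall j\in J\}$ is a finite union of rectangles bounded away from zero (a continuity set of $\nu_{\bm Y}$, since $\nu_{\bm Y}$ puts no mass on the hyperplanes $\{y_j=1/x_j\}$ by the explicit form of the exponent measure below). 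By Proposition~\ref{prop:spectral_measure_max_lin} with $\alpha=1$ and loading matrix $\bar A$, the spectral measure of $\bm Y$ is $\Phi_{\bm Y}=\varsigma^{-1}\sum_{a\in[K]}\|\bar A_{\cdot a}\|\,\delta_{\bar A_{\cdot a}/\|\bar A_{\cdot a}\|}$; equivalently, via \eqref{eq:relation-exponent-spectral-measure} the exponent measure is $\nu_{\bm Y}=\sum_{a\in[K]}\int_0^\infty \delta_{r\bar A_{\cdot a}}\,\alpha r^{-\alpha-1}\diff r$ with $\alpha=1$, i.e.\ a sum of ``rays'' through the columns $\bar A_{\cdot a}$. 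Integrating the indicator of $B_{\mathcal J}(\bm x)$ against this measure, the $a$th ray contributes $\int_0^\infty \mathbf{1}\{\exists J\in\mathcal J\ \forall j\in J: r\bar A_{ja}>1/x_j\}\,r^{-2}\diff r = \int_0^\infty \mathbf 1\{r>\min_{J\in\mathcal J}\max_{j\in J}(\bar A_{ja}x_j)^{-1}\}\,r^{-2}\diff r$, which evaluates to $\bigvee_{J\in\mathcal J}\bigwedge_{j\in J}\bar A_{ja}x_j$. Summing over $a$ gives the claimed formula. (One should double-check the corner case where $\bar A_{ja}x_j=0$ for all $j$ in every $J$ along ray $a$, where the inner term is $0$ and the integral vanishes, consistent with the convention.)

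\textbf{Step 2: $L^\cap_{\mathcal J}$ and the subvector formulas.} For $L^\cap_{\mathcal J}$ the event is $\{\forall J\in\mathcal J\ \exists j\in J: Y_j>n/x_j\}$, whose limiting measure is $\nu_{\bm Y}(\bigcap_{J\in\mathcal J}\bigcup_{j\in J}\{y_j>1/x_j\})$; running the same ray integration, the $a$th ray now contributes $\int_0^\infty\mathbf 1\{r>\max_{J\in\mathcal J}\min_{j\in J}(\bar A_{ja}x_j)^{-1}\}r^{-2}\diff r=\bigwedge_{J\in\mathcal J}\bigvee_{j\in J}\bar A_{ja}x_j$, giving the second display. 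The two ``in particular'' formulas are immediate specializations: taking $\mathcal J=\{\{j\}:j\in J\}$ (all singletons from $J$) in the $L^\cap$ formula yields $R_J$ (and in the $R^\cup$ formula it also yields $R_J$, consistently), while taking $\mathcal J=\{J\}$ in the $R^\cup$ formula yields $L_J$; alternatively $L_J(\bm x_J)=\lim n\Prob(\exists j\in J: Y_j>n/x_j)$ is just $L(\tilde{\bm x})$ with $\tilde x_j=x_j$ for $j\in J$ and $\tilde x_j=0$ otherwise, and \eqref{eq:stable-tail-dp-A} gives $\sum_a\bigvee_{j\in J}\bar A_{ja}x_j$ directly. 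Formula \eqref{eq:chiA} for $\chi(j,\ell)$ then follows from $\chi(j,\ell)=R_{\{j,\ell\}}(1,1)=\sum_a \bar A_{ja}\wedge\bar A_{\ell a}$, which also matches $2-L(\bm e_j+\bm e_\ell)=2-\sum_a(\bar A_{ja}\vee\bar A_{\ell a})$ using the row-sum-$1$ identity $\bar A_{ja}+\bar A_{\ell a}=(\bar A_{ja}\vee\bar A_{\ell a})+(\bar A_{ja}\wedge\bar A_{\ell a})$ summed over $a$ — worth noting as a cross-check.

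\textbf{Step 3: extremal directions.} By definition, $D\subset[d]$ nonempty is an extremal direction iff $\Phi_{\bm Y}(\Sb_D)>0$, where $\Sb_D=\{\bm x\in\Sb^{d-1}_+: x_j>0\iff j\in D\}$. Since $\Phi_{\bm Y}$ is the discrete measure supported on the $K$ points $\bar A_{\cdot a}/\|\bar A_{\cdot a}\|$, $a\in[K]$ (columns of $\bar A$ are nonzero because $\bar A\in\Acp(K)$, so each atom is well-defined and has positive mass $\varsigma^{-1}\|\bar A_{\cdot a}\|$), we have $\Phi_{\bm Y}(\Sb_D)>0$ iff some normalized column lies in $\Sb_D$, i.e.\ iff $D=\{j:\bar A_{ja}>0\}=\{j:A_{ja}>0\}=D_a(A)$ for some $a$ (equality of the two index sets holds since $\bar A_{ja}>0\iff A_{ja}>0$ by \eqref{eq:barA}). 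Hence the extremal directions are exactly the \emph{distinct} sets among $D_1,\dots,D_K$, so there are at most $K$ of them, as claimed.

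\textbf{Main obstacle.} Nothing here is deep, but the one place to be careful is the measure-theoretic bookkeeping in Step~1: justifying that the relevant sets are $\nu_{\bm Y}$-continuity sets (so that the vague$^\#$ convergence of \eqref{eq:regular_variation} with $c_n=n$ transfers to convergence of the probabilities), handling the cases where some $x_j=0$ or some $\bar A_{ja}=0$ uniformly across the max/min/union/intersection structure, and confirming that the exponent-measure representation $\nu_{\bm Y}=\sum_a\int_0^\infty\delta_{r\bar A_{\cdot a}}r^{-2}\diff r$ really is the push-forward form implied by combining \eqref{eq:def-spectral-measure-new}, \eqref{eq:relation-exponent-spectral-measure} and \eqref{eq:Phi_A} at $\alpha=1$. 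Once that representation is in hand, every displayed identity reduces to the elementary one-dimensional integral $\int_0^\infty\mathbf 1\{r>c\}r^{-2}\diff r=1/c$ (with $1/c=0$ when $c=\infty$), applied with $c$ equal to the appropriate max–min combination of $(\bar A_{ja}x_j)^{-1}$.
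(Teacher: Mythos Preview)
Your proposal is correct and follows essentially the same approach as the paper: identify the exponent/spectral measure of the standardized vector $\bm Y$ via Lemma~\ref{lemma:stable-tail-dependence-spectral-measure} and Proposition~\ref{prop:spectral_measure_max_lin}, then evaluate the relevant set against this discrete measure to reduce everything to the elementary integral $\int_0^\infty \mathbf 1\{r>c\}\,r^{-2}\,\diff r = 1/c$. The only cosmetic difference is that the paper routes through the polar decomposition \eqref{eq:relation-exponent-spectral-measure} and the spectral measure $\Phi_{\bm Y}$, whereas you write $\nu_{\bm Y}$ directly in its ``ray'' form $\sum_a\int_0^\infty\delta_{r\bar A_{\cdot a}}\,r^{-2}\,\diff r$; these are the same computation and yield the same one-line integral for each summand.
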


In the remaining part of this section, we introduce an additional assumption on the matrix $\bar A$ called \emph{the pure variable assumption} under which we can `invert the relationship in \eqref{eq:chiA}'. More precisely, the number of columns of $\bar A$ (i.e., the number of factors) and the matrix $\bar A$ from \eqref{eq:barA} will be shown to be identifiable from the tail correlation matrix, which will be the basis for the estimators studied in Section~\ref{sec:statistics-stdf-pure-variable}. The resulting model's flexibility and the corresponding restrictiveness of the assumption is examined in Section~\ref{sec:case_studies} via empirical applications. These applications also motivate potentially more flexible, though mathematically more challenging, extensions that shall be studied in future work.

\begin{condition}[Pure Variable Assumption] \label{cond:purevar}
A matrix $A \in [0,\infty)^{d \times K}$ is said to satisfy the \textit{pure variable assumption} if,
for any factor index $a\in[K]$, there exists a variable index $j \in [d]$ such that $A_{ja}>0$ and $A_{jb}=0$ for all $b \ne a$.  
In other words, we have
\begin{align} \label{eq:Ia}
I_a := I_a(A) := \big\{ j \in [d]: A_{ja}>0 \text{ and } A_{jb}=0 \ \forall b \ne a \big\} \ne \emptyset.
\end{align}
The set of all matrices that satisfy the pure variable assumption and have positive row sums is denoted by $\Apure(K) \subset \Arp(K) \cap \Acp(K)$, and $I=I(A) := \bigcup_{a \in [K]} I_a$ denotes the \textit{pure variable set} and $J=[d] \setminus I$ the \textit{non-pure variable set}.
\end{condition}

Note that the pure variable assumption implies that $K\le d$. 
To formalize the identifiability statement from the above, let
\begin{align} \label{eq:def-phi-l}
     \Phi_L: &\Theta_L \to [0,1]^{d \times d}, \qquad  (K, \bar A) \mapsto \mathcal X = \Big(\sum_{a\in[K]} \bar A_{ja} \wedge \bar A_{\ell a} \Big)_{j,\ell \in [d]},
\end{align}
where
\begin{align}
\Theta_{L} = \Theta_L(d) 
    &= \label{eq:Theta-L}
    \big\{ (K,\bar A) \mid K \in [d], \bar A \in \Apure(K) \text{ has row sums } 1\big\}.
\end{align}
The following lemma shows that $\Phi_L$ is injective. In statistical terms, the parameter $\theta \in \Theta_L$ is identifiable from its image under $\Phi_L$, which will eventually allow for its estimation. Two matrices $A^{(1)}, A^{(2)} \in \mathbb{R}^{d \times K}$ will be called equivalent, notation $A^{(1)} \sim A^{(2)}$, if $A^{(1)} = A^{(2)}P$ for some permutation matrix $P \in \R^{K \times K}$.

\begin{proposition}[Identifiability]
\label{prop:injective-new}
The function $\Phi_{L}$ is injective up to column permutations of $ \bar A$, that is, $\Phi_{L}(K^{(1)}, \bar A^{(1)}) = \Phi_{L}(K^{(2)}, \bar A^{(2)})$ implies $K^{(1)}= K^{(2)}$ and $\bar {A}^{(1)} \sim \bar {A}^{(2)}$. 
\end{proposition}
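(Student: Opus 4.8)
The plan is to prove identifiability constructively: I will show that from the matrix $\mathcal X = \Phi_L(K,\bar A)$ alone one can recover $K$ and the matrix $\bar A$ up to a column permutation, through a procedure that will also underlie the estimator of Section~\ref{sec:statistics-stdf-pure-variable}. Injectivity is then immediate, since two parameters with the same image under $\Phi_L$ are sent by this procedure to the same $K$ and the same $\sim$-class of loading matrices. Two elementary observations will be used repeatedly: since $(K,\bar A)\in\Theta_L$, each row $\bar A_{j\cdot}$ is a probability vector on $[K]$ (nonnegative entries, row sum $1$), so in particular $\mathcal X_{jj}=\sum_{a\in[K]}\bar A_{ja}=1$; and if $j\in I_a$ is a pure variable, then $\bar A_{j\cdot}=\bm e_a$, so that $\mathcal X_{j\ell}=\sum_{b\in[K]}\bar A_{jb}\wedge\bar A_{\ell b}=\bar A_{\ell a}$ for every $\ell\in[d]$.

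First I would introduce, in terms of $\mathcal X$ only, the notion of a \emph{pure representative set}: a subset $S\subseteq[d]$ with (i) $\mathcal X_{j\ell}=0$ for all distinct $j,\ell\in S$, and (ii) $\sum_{j\in S}\mathcal X_{jm}=1$ for all $m\in[d]$. The aim is to prove that the pure representative sets are precisely the sets obtained by choosing one index from each $I_a$, $a\in[K]$. The ``easy'' inclusion is a short computation from the two observations above: if $S=\{p_1,\dots,p_K\}$ with $p_a\in I_a$, then $\mathcal X_{p_a p_b}=\bar A_{p_b a}=0$ for $a\ne b$ (giving (i)) and $\sum_{a\in[K]}\mathcal X_{p_a m}=\sum_{a\in[K]}\bar A_{ma}=1$ (giving (ii)), and the $p_a$ are pairwise distinct because $I_a\cap I_b=\emptyset$ for $a\ne b$; also such $S$ exist, by Condition~\ref{cond:purevar}.

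For the converse inclusion — the step I expect to be the main obstacle, and where the pure variable assumption and the nonzero-column-sum property $\bar A\in\Acp(K)$ are both essential — I would argue as follows. Given $S$ satisfying (i)--(ii), condition (i) forces $\bar A_{ja}\bar A_{\ell a}=0$ for every $a$ and every distinct $j,\ell\in S$, i.e.\ the supports $\supp(\bar A_{j\cdot})$, $j\in S$, are pairwise disjoint; set $U=\bigcup_{j\in S}\supp(\bar A_{j\cdot})\subseteq[K]$, and for $a\in U$ let $j(a)\in S$ be the unique index with $a\in\supp(\bar A_{j(a)\cdot})$. For arbitrary $m\in[d]$, condition (ii) and the disjointness of supports yield
\[
1=\sum_{j\in S}\mathcal X_{jm}
=\sum_{j\in S}\sum_{a\in\supp(\bar A_{j\cdot})}\bar A_{ja}\wedge\bar A_{ma}
=\sum_{a\in U}\bar A_{j(a)a}\wedge\bar A_{ma}
\le\sum_{a\in U}\bar A_{ma}
\le\sum_{a\in[K]}\bar A_{ma}=1,
\]
so every inequality is an equality. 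Equality in the last step, for every $m$, forces $\bar A_{ma}=0$ whenever $a\notin U$; since each column of $\bar A$ has nonzero sum, this gives $U=[K]$. Equality in the first inequality gives $\bar A_{j(a)a}\ge\bar A_{ma}$ for all $a\in[K]$ and all $m\in[d]$; taking $m=p_a\in I_a$ (which exists by Condition~\ref{cond:purevar} and satisfies $\bar A_{p_a a}=1$) forces $\bar A_{j(a)a}=1$, hence $\bar A_{j(a)\cdot}=\bm e_a$ and $j(a)\in I_a$. Moreover $a\mapsto j(a)$ is a bijection of $[K]$ onto $S$ (surjective because every row of $\bar A$ has nonempty support contained in $U$, injective because $\supp(\bar A_{j(a)\cdot})=\{a\}$), so $|S|=K$ and $S$ has the claimed form.

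To conclude, note that being a pure representative set is a property of $\mathcal X$ only, that such sets exist, and that they all have cardinality $K$. Hence one may define $\widehat K:=|S|$ for any pure representative set $S$, then fix such an $S$, enumerate it as $\{j_1,\dots,j_{\widehat K}\}$, and put $\widehat A_{ma}:=\mathcal X_{j_a m}$. By the previous paragraph $j_a\in I_{\sigma(a)}$ for some bijection $\sigma$ of $[K]$, so $\widehat A_{ma}=\bar A_{m\sigma(a)}$, i.e.\ $\widehat A=\bar A P_\sigma\sim\bar A$; a different choice of $S$ or enumeration only changes $\widehat A$ within its $\sim$-class, and does not change $\widehat K$. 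Applying this to $\mathcal X=\Phi_L(K^{(1)},\bar A^{(1)})=\Phi_L(K^{(2)},\bar A^{(2)})$ gives $K^{(1)}=\widehat K=K^{(2)}$ and $\bar A^{(1)}\sim\widehat A\sim\bar A^{(2)}$, which is the assertion. The only computations I would leave out are the two one-line verifications in the easy inclusion and the routine bookkeeping identifying the permutation matrix $P_\sigma$.
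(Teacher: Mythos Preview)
Your argument is correct. The two ingredients you isolate --- disjoint supports from condition (i), and the squeeze argument from condition (ii) forcing both $U=[K]$ and $\bar A_{j(a)a}=1$ --- do exactly what is needed, and the conclusion follows cleanly.

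The route, however, is genuinely different from the paper's. Both approaches start from the zero-pattern of $\mathcal X$ (your condition (i) is precisely the clique condition in the graph $G$ of Proposition~\ref{prop:pur_var_ide-new2}), but you single out the right cliques by imposing the additional sum constraint $\sum_{j\in S}\mathcal X_{jm}=1$, whereas the paper singles them out by \emph{maximality}: it shows that every clique has size at most $K$, that maximum cliques have size exactly $K$ and consist only of pure variables, and then uses the further fact $\chi(j,\ell)=1$ iff $j,\ell$ lie in the same $I_a$ to recover the full partition $\{I_1,\dots,I_K\}$. Your argument is more self-contained and arguably more elegant for identifiability per se --- it never needs to touch the ``$\chi=1$'' characterisation --- but the paper's version proves strictly more (namely Proposition~\ref{prop:pur_var_ide-new2}), and it is that graph-theoretic characterisation which drives the \textsc{PureVar} algorithm in Section~\ref{sec:statistics-stdf-pure-variable}: the estimator searches for a maximum clique in a thresholded graph and then groups variables with $\hat\chi\ge 1-\kappa$. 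So your remark that the procedure ``will also underlie the estimator'' is a slight overclaim: your sum condition (ii) does not translate directly into the algorithm, whereas the paper's maximum-clique-plus-$\chi=1$ characterisation does.
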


The previous proposition is a consequence of the following result, a sample version of which will be the basis for the \textsc{PureVar}-algorithm introduced in Section~\ref{sec:statistics-stdf-pure-variable}.

\begin{proposition}
\label{prop:pur_var_ide-new2} 
Fix $d \in  \N$ and suppose that $\Xi =(\xi_{j,\ell})_{j,\ell=1}^d$ is an extremal correlation matrix. Recall $\Phi_L$ from \eqref{eq:def-phi-l}, and define
\begin{align*}
M_\Xi = \big\{\theta = (K, \bar A) \in \Theta_L \mid \Xi = \Phi_L(K, \bar A) \big\}
\end{align*}
as the set of parameters which are compatible with the tail correlation matrix $\Xi$. 
Let $G=G(\Xi) = (V,E)$ denote the undirected graph with vertex set $V=[d]$ and edge set $E$ consisting of all $(j,\ell)$ such that $\xi_{j,\ell}=0$. 
Let $C$ be a maximum clique in $[d]$ (that is, there does not exist any clique with more vertices), and write $L=|C|$.
Then, for any $\theta =(K, \bar A)\in M_\Xi$:
\begin{compactenum}[(a)]
\item We have $K=L$.
\item The clique $C$ only contains pure variables of $\bar A$.
\item For any $a\in[K]$ with $|I_a(\bar A)| \ge 2$ and any $j\in I_a(\bar A)$ we have, for all $\ell \ne j$,
\[
\ell \in I_a(\bar A) \Longleftrightarrow \xi_{j,\ell} =1.
\]
\item 
The set of pure variables $I=I(\bar A)$ can be uniquely determined from $\Xi$.
Moreover, up to label permutations, the partition $\{I_1(\bar A), \dots, I_K(\bar A)\}$  can be uniquely determined from $\Xi$.
\end{compactenum}
\end{proposition}

\section{Estimating max-linear stable tail dependence functions under a pure variable assumption}
\label{sec:statistics-stdf-pure-variable}

In this section, we assume that we observe an i.i.d.\ sample $\bm X_1, \dots, \bm X_n$ from a random vector $\bm X$ satisfying Condition~\ref{cond:stdf-max-linear}, that is, $\bm X$ has  continuous marginal cdfs and its stable tail dependence function $L$ is of the form given in \eqref{eq:stable-tail-dp-A}:
\[
L(\bm x) = L_{K, \bar A}(\bm x) = \sum_{a \in [K]} \bigvee_{j \in [d]} \bar A_{ja} x_j, \quad \bm x \in [0, \infty)^d
\]
for some unknown parameter $\theta_L = (K, \bar A) \in \Theta_L$, where the parameter space $\Theta_L=\big\{ (K,\bar A) \mid K \in [d], \bar A \in \Apure(K) \text{ has row sums } 1\big\}$ is as in \eqref{eq:Theta-L}; i.e., $\bar A$ satisfies Condition~\ref{cond:purevar}.
This setup provides a tail-dependence model that is free of any marginal assumptions (except for continuity), and it can be considered a special instance of the semiparametric models considered in \cite{einmahl2012mestimator}, but with a non-standard parameter space that does not fit into their framework.

Our goal is to define estimators for $K$ and $\bar A$, and to equip them with finite-sample statistical guarantees in arbitrary dimensions. We proceed in two steps, each of which is dealt with in a separate section.

\subsection{The PureVar-Algorithm}

We start by describing how to estimate $K, I$ and $\mathcal I=\{ I_1, \dots, I_K\}$. Inspired by the result in Proposition~\ref{prop:injective-new}, the method depends on the estimated tail correlation matrix, $\hat{\mathcal X}=(\hat{\chi}_{n, \knon}(j,\ell))_{j,\ell \in [d]}$, and a small threshold $\hyperpurevar \in (0,1/2)$ whose choice will be specified later. Here,  $\hat{\chi}_{n, \knon}(j,\ell)$ denotes the empirical extremal correlation, which is defined by
\begin{align}
\label{eq:empirical_correlation}
    \hat{\chi}_{n, \knon}(j,\ell) = \frac1{\knon} \sum_{i=1}^n  \bm 1 ( R_{i,j}>n-\knon, R_{i,\ell}>n-\knon ), \qquad j, \ell\in[d],
\end{align}
where $\knon \in [n]$ is a hyperparameter balancing bias and variance (in an asymptotic framework, it is typically required to satisfy $k_n \to \infty$ and $k_n/n = o(1)$ as $n \to \infty$) and $R_{i,j}$ denotes the rank of $X_{i,j}$ among $X_{1,j}, \dots, X_{n,j}$, breaking ties at random if necessary. Note that $\hat\chi_{n,\knon}(j,j)=1$ for all $j\in[d]$. 
Now, inspired by the population level result in Proposition~\ref{prop:pur_var_ide-new2}, we propose to proceed as follows: first, construct the graph $\hat G=(V,E)$ with $V=[d]$ and edge set $\hat E$ consisting of all $(j,\ell)$ such that $\hat{\chi}_{n, \knon}(j,\ell) \le \hyperpurevar$. Then, find a maximum clique $\hat C$ in that graph, and define $\hat K = |\hat C|$. Moreover, for any $j \in \hat C$, define $\hat I_j = \{ j \} \cup \{\ell \in [d]: \hat{\chi}_{n, \knon}(j, \ell) \ge 1-\hyperpurevar\}$, and let $\hat{\mathcal I} = \{ \hat I_1, \dots, \hat I_{\hat K}\}$. The method is summarized in Algorithm~\ref{alg:purevar}.

\begin{algorithm}
\caption{(Estimating $K, I$ and $\mathcal I$)} \label{alg:purevar}
\begin{algorithmic}[1]
\Require{$\hat {\mathcal X} \in[0,1]^{d \times d}, \hyperpurevar \in (0,1/2)$}
\Function{PureVar}{$\hat{\mathcal X}, \hyperpurevar$}
    \State{Construct the graph $\hat G=(V,\hat E)$ with $V=[d]$ and edge set $\hat E=\{ (j,\ell): \hat{\chi}_{n, \knon}(j,\ell) \le \hyperpurevar \}$}
    \State{Find a maximum clique $\hat C\subseteq [d]$, define  $\hat K=|\hat C|$, and write $\hat C=\{\hat j_1, \dots, \hat j_{\hat K}\}$}
    \For{$a = 1, \ldots, \hat K$} 
        \State{$\hat I_a \gets \{ \hat j_a \} \cup \{\ell \in [d]: \hat{\chi}_{n, \knon}(\hat j_a, \ell) \ge 1-\hyperpurevar\}$}
    \EndFor
    \State{$\hat{\mathcal I} \gets \{ \hat I_1, \dots, \hat I_{\hat K}\}$}
    \State{$\hat{I} \gets \hat I_1 \cup \dots \cup \hat I_{\hat K}$}
    \State{\Return $(\hat K, \hat I, \hat{\mathcal I})$}
    \EndFunction
\end{algorithmic}
\end{algorithm}

\begin{remark}
    Note that the maximum clique in Line 3 of Algorithm~\ref{alg:purevar} is not unique in general, meaning that a (possibly random) choice has to be made. For instance, in the case $d=2$ with $\hat \chi_{n,\knon}(1,2)=1/2$ and $\hyperpurevar=1/4$, both $\hat C=\{1\}$ and $\hat C=\{2\}$ are possible choices for the maximum clique. The former choice results in $\textsc{PureVar}(\hat {\mathcal X}, \hyperpurevar)=(1,\{1\}, \{\{1\}\})$, while the latter results in $\textsc{PureVar}(\hat {\mathcal X}, \hyperpurevar)=(1,\{2\}, \{\{2\}\})$. In the results to follow, we will show that the choice does not matter, provided that $\hyperpurevar$ is chosen appropriately. 
\end{remark}

Statistical guarantees for the \textsc{PureVar}-Algorithm can be obtained if the tuning parameter $\hyperpurevar$ is chosen appropriately, respecting the bias of $\hat{\mathcal X}$ and the size of certain entries in $\bar A$. The latter will be measured by the following signal strength parameter $\eta$:
\begin{align}
\label{eq:signal-strength-eta}
\eta := \eta(\bar A) := \min\big\{ \bar A_{ja} \wedge (1-\bar A_{ja}) \mid j \in [d], a \in [K] \text{ s.t. } \bar A_{ja} \notin\{ 0,1\}\big\} \in (0,1/2].
\end{align}
Note that $\bar{A}_{ja}\in \{0\} \cup [\eta, 1-\eta] \cup \{1\}$ for all $j \in [d]$ and $a \in [K]$. 
For controlling the bias of the empirical tail correlation, let
$\chi_{n,\knon}(j,\ell)$ with $j,\ell \in [d]$ and $j\ne \ell$ denote the pre-asymptotic extremal coefficient between $X_j$ and $X_\ell$, i.e.,
\begin{equation*}
    \chi_{n, \knon}(j,\ell) = \frac{n}{\knon} \mathbb{P}\left\{ F_j(X_j) > 1-\knon /n , F_\ell(X_\ell) > 1- \knon/n \right\},
\end{equation*}
and define
\begin{equation} \label{eq:dkn}
   D(\knon/n) := \underset{j, \ell \in [d], j \ne \ell}{\sup} \, \big|\chi_{n, \knon}(j,\ell) - \chi(j,\ell) \big|
\end{equation}
as the maximal bias. Combined with the concentration inequality from Proposition~\ref{prop:concentration-tail-correlation}, we find that, for each fixed small probability level $\delta\in(0,1)$, we have $\sup_{j, \ell \in [d], j \ne \ell} \big|\hat \chi_{n, \knon}(j,\ell) - \chi(j,\ell) \big| \le \kappa_0 = \kappa_0(\delta, n, \knon, d)$ with probability at least $1-\delta$, where
\begin{equation} \label{eq:kappa0}
    \kappa_0 = D(\knon/n) + \frac{\sqrt{2}+ \sqrt{16 \ln(d/\delta)} + \sqrt{16 \ln(4d/\delta)}}{\sqrt{\knon}} + \frac{6+4\ln(d/\delta) + 8\ln(4d/\delta)}{3\knon};
\end{equation}
see the beginning of the proof of the next result for details.
As a consequence, we can expect the PureVar-Algorithm to work well if the hyperparameter $\hyperpurevar$ is at least as large $\kappa_0$; this is one of the conditions in the following result.

\begin{theorem}[Statistical guarantees for the \textsc{PureVar}-Algorithm]
\label{thm:consistency_K_pure2}
Suppose $\bm X_1, \dots, \bm X_n$ is an i.i.d.\ sample from a random vector $\bm X$ satisfying Condition~\ref{cond:stdf-max-linear}, with unknown parameter $(K, \bar A) \in \Theta_L$ with $\Theta_L$ from \eqref{eq:Theta-L} (i.e., $\bar A$ satisfies Condition~\ref{cond:purevar}). 
Fix a small probability level $\delta \in (0,1)$.
Then, with probability at least $1-\delta$, if $\hyperpurevar$ from Algorithm~\ref{alg:purevar} is chosen such that $\hyperpurevar \in [\kappa_0, \eta/2)$,
the following statements are true for any choice of a maximum clique that is to be made in Algorithm~\ref{alg:purevar}:
\begin{compactenum}[(i)]
\item $\hat K = K$ and $\hat I = I$. \label{item:thm_consistency_K_pure_i}
\item There exists a permutation $\pi$ on the set $[K]$ such that, for any $a \in [K]$, $\hat I_a=I_{\pi(a)}$. \label{item:thm_consistency_K_pure_ii}
\end{compactenum}
\end{theorem}

The result can be visualized as follows: with high probability, provided the accuracy of the empirical tail correlation matrix as measured by $\kappa_0$ is smaller than half the signal strength (which is `easier' to satisfy for large sample size or small dimensionality), we recover $K, I$ and $\mathcal I$ for any choice of the hyperparameter $\hyperpurevar$ that falls into the range $[\kappa_0, \eta/2)$. Typically, one would choose $\knon=\kn$ as an intermediate sequence (i.e., $\kn \to \infty$ with $\kn=o(n)$), which implies that $n \mapsto \kappa_0(\delta, n, \kn, d)$ is decreasing for any fixed $(\delta, d)$. Hence, the condition $\kappa_0 < \eta/2$ is satisfied for sufficiently large $n$. Apparently, we may even allow for $d =d_n \to \infty$; details are provided in Section~\ref{subsec:high-dimensions}.

\begin{remark}
    A close look at the proof of Theorem~\ref{thm:consistency_K_pure2} shows that the same assertion holds for a (potentially) larger signal strength parameter $\eta$, namely, for $\eta$ replaced by
    \[
    \tilde \eta = \min\big\{ 1- \bar A_{ja} \mid j \in J, a \in [K] \big\} 
    \wedge 
    \min\big\{ \bar A_{j, (K-1:K)} \mid j \in J \big\},
    \]
    where $\bar A_{j,(K-1:K)}$ denotes the second largest entry in the $j$th row of $\bar A$ (which is necessarily positive for $j \in J$).
By definition, for all non-pure-variables $j\in J$, we have  $\bar A_{ja} \le 1-\tilde \eta$ for all $a\in [K]$ and $\bar A_{ja} \ge \tilde \eta$ and $\bar{A}_{jb}\ge \tilde \eta$ for at least two distinct $a,b \in [K]$. As this improvement tends to be incremental, we will not go into more detail in the following sections of this paper.
\end{remark}

\subsection{The `Hard Thresholding and Simplex Projector'-Algorithm}

Using the estimators $\hat{\mathcal X}=(\hat{\chi}_{n, \knon}(j,\ell))_{j,\ell \in [d]}$ from \eqref{eq:empirical_correlation} and $\hat{K}$, $\hat{I}$ and $\hat{\mathcal{I}}$ obtained from Algorithm~\ref{alg:purevar}, we next construct an estimator $\triangleA \in [0,\infty)^{d \times \hat K}$ for the matrix $\bar A$, where we aim to ensure that the estimator is row-sparse. The latter means that any observed variable should only be connected to a small number of latent variables. Further, in view of the fact that $\bar A$ has row sums 1, we want to impose the same constraint on the estimator.

The estimator $\triangleA$ will be constructed in three steps, where, in step 1, we construct an initial estimator $\triangleAa$ which is neither row-sparse nor has row sums 1. 
For that purpose, we proceed rowwise, and we start with $j \in \hat I$, i.e., with an index corresponding to an estimated pure variable. For such $j$, there exists a unique $a \in [\hat K]$ such that $j \in \hat I_a$, and we define
\begin{equation*}
    \triangleAa_{ja} := 1, \quad \triangleAa_{jb} := 0, \quad \text{for } b \neq a.
\end{equation*}
Next, consider $j \in \hat J = [d] \setminus \hat I$. By definition of  $\chi(j,\ell)$ and of $L=L_{K, \bar A}$, we have $\chi(j,\ell) = \bar{A}_{j a}$ for all $\ell \in I_a$ and $j\in J$; see also \eqref{eq:chiA}. As a consequence, averaging over $\ell \in I_a$, we obtain that
$
\bar{A}_{j a} = |I_a|^{-1} \sum_{\ell \in I_a} \chi(j,\ell) ,
$
which suggests the estimator
\begin{align}
\label{eq:Ahat1}
\triangleAa_{j a} 
:= 
\bar{\hat \chi}_{n,k}(j,a) 
:= 
\frac{1}{|\hat{I}_a|} \sum_{\ell \in \hat{I}_a} \hat{\chi}_{n,k}(j,\ell), \qquad j \in \hat J, a \in [\hat K].
\end{align}
Since $a\in [\hat K]$ was arbitrary, we have defined the $j$th row of $\triangleAa_{j\cdot}$ and hence the full matrix $\triangleAa$. 
Note that the row sums of $\triangleAa$ are non-zero: this is obvious for $j \in \hat I$, and for $j\in \hat J$, we claim that there exists $\ell \in \hat I$ such that $\hat{\chi}_{n, \knon}(j,\ell)\ge \hyperpurevar>0$ with $\hyperpurevar$ from Algorithm~\ref{alg:purevar}. Indeed, otherwise, if we had $\hat{\chi}_{n, \knon}(j,\ell) < \hyperpurevar$ for all $\ell \in \hat I$, then the clique $\hat C$ constructed in Algorithm~\ref{alg:purevar} cannot be maximal, since $\hat C \cup \{j\}$ is a clique of larger cardinality.

In the second step, we transform the initial estimator $\triangleAa$ into an estimator that is row-sparse. For that purpose, we employ hard thresholding: let $\hypersparsity \in [0,1]$ denote a threshold parameter controlling the level of sparsity; often, we will choose $\hypersparsity=\hyperpurevar$, but more general choices are possible as illustrated in Section~\ref{sec:case_studies}. We then define 
\begin{equation}
\label{eq:Ahat2}
\triangleAb_{j a }  = \triangleAa_{ja} \bm 1( \triangleAa_{ja}  > \hypersparsity), \qquad j \in [d], a \in [\hat K].
\end{equation}
Here, we implicitly assume that the threshold parameter is chosen sufficiently small such that $\triangleAb$ still has non-zero row sums; by construction, this holds for $\hypersparsity \in [0, \min_{j\in[d]} \max_{a \in [\hat K]} \triangleAa_{ja}]$.

In the third step, the row-sparse estimator $\triangleAb$ will be transformed into an estimator which has row sums 1, i.e., whose row vectors belong the unit simplex $\Delta_{\hat{K}-1} = \{ \bm{v} \in [0,1]^{\hat{K}} :  \sum_{a=1}^{\hat{K}} v_a = 1 \}$. For that purpose, consider the projection operator $\mathcal P: [0,1]^p \to \Delta_{p-1}$, defined, for arbitrary $p\in\N$ and  $\bm w \in [0,1]^p$, by
\begin{align}
\label{eq:P-and-tau}
\mathcal P(\bm w) = \big(\max(w_a-\tau(\bm w), 0) \big)_{a=1}^p,
\qquad
\tau(\bm w) = \frac1\rho \Big( - 1 + \sum_{b=1}^\rho w_{(b)} \Big)
\end{align}
where $w_{(1)} \ge  \dots  \ge  w_{(p)}$ are the sorted components of $\bm w$ and where $\rho = \rho(\bm w) := \max \{ b \in [p] : w_{(b)} > \frac{1}{b}(\sum_{a=1}^b w_{(a)}-1) \}$. Note that $\rho>0$ is the number of non-zero components of $\mathcal P(\bm w)$ and that
\begin{align*}
\mathcal P(\bm w) \in \argmin_{\bm x \in \Delta_{p-1}} \| \bm x - \bm w\|_2,
\end{align*}
see Section 3 in \cite{duchi2008efficient}.
Further note that $\mathcal P(\bm w)$ may have more or less zero entries than $\bm w$; for instance, we have $\mathcal P(.2, .2, .2, 0) = (.3, .3, .3, .1)$ and $\mathcal P(1, .4, .1, 0) = (.8, .2,0,0)$. The fact that the number of zeros of $\mathcal P(\bm w)$ may be larger than the number of zeros of $\bm w$ is a nuisance that can be avoided by projecting the non-zero entries only. Formally, for $\bm w \in [0,1]^p \setminus \{\bm 0\}$, let $S = \supp(\bm w)= \{ a \in [p]: w_a >0 \} \ne \emptyset$. We then define $\tilde{\mathcal P}(\bm w)$ by
\begin{align*}
\tilde{\mathcal P}(\bm w) |_S = \mathcal P(\bm w |_S), 
\qquad
\tilde{\mathcal P}(\bm w) |_{[p] \setminus S} = \bm 0.
\end{align*}
Formally, this defines another function $\tilde {\mathcal P}: [0,1]^p \setminus \{ \bm 0\} \to \Delta_{p-1}$, which has the property that $\supp(\tilde {\mathcal P}(\bm w)) \subset \supp(\bm w)$. This function will now be used to define the final estimator $\triangleA$: we set
\begin{equation}
    \label{eq:triangleA}
    \triangleA_{j \cdot} = \tilde{\mathcal P}(\triangleAb_{j\cdot}), \qquad j \in  [d].    
\end{equation}
Note that $\triangleA_{j\cdot}=\triangleAa_{j\cdot}$ for all $j\in \hat I$.
The sparsity level of $\triangleA$ is denoted by $\hat s = \hat s(\hypersparsity) = \max_{j\in[d]} |\supp(\triangleA_{j\cdot})|$, which is a decreasing function of $\hypersparsity$. 

The estimator $\triangleA$ may be called a ``Hard Thresholding and Simplex Projector'' (HTSP); the underlying algorithm is inspired by  \cite{kyrillidis2013sparse} and is summarized in Algorithm~\ref{alg:htsp}. It is worthwhile to  mention that
\begin{equation*}
    \triangleA_{j\cdot} \in \underset{\bm v \in \Delta_{\hat{K}-1}(\hat s)}{\textrm{argmin}} \, \| \bm{v} - \triangleAa_{j\cdot}\|_2, \quad \forall j \in \hat{J},
\end{equation*}
where $\Delta_{\hat{K}-1}(\hat s) = \{ \bm{v} \in \Delta_{\hat{K}-1} : |\supp(\bm v)| \leq \hat s \}$; see \cite[Theorem 1]{kyrillidis2013sparse}.

\begin{algorithm}
\caption{(Estimating $\bar{A}$: The Hard Thresholding and Simplex Projector)} \label{alg:htsp}
\begin{algorithmic}[1]
\Require{$\hat {\mathcal X} \in[0,1]^{d \times d}, \hyperpurevar, \hypersparsity  \in (0,1/2)$}
\Function{HTSP}{$\hat{\mathcal X}, \hyperpurevar$}
    \State $ (\hat{K}, \hat{I}, \hat{\mathcal I}) \gets \textsc{PureVar}(\mathcal X, \hyperpurevar)$ \Comment{See Algorithm~\ref{alg:purevar}}
    \For {$ j \in \hat{I}$}
    \For {$a \in [\hat K]$}
    \If{$j \in \hat I_a$}
		\State $\triangleA_{j\cdot} \gets e_a$ \Comment{$e_a$ the $a$th unit vector in $\R^{\hat K}$} 
    \EndIf
    \EndFor
    \EndFor
    \For {$ j \in \hat{J}=[d] \setminus \hat I$}
        \For {$a \in [\hat K]$}
		  \State $\triangleAa_{ja} \gets |\hat{I}_a|^{-1} \sum_{\ell \in \hat{I}_a} \hat{\chi}_{n,k}(j,\ell)$ 
            \State $\triangleAb_{ja} \gets \triangleAa_{ja} \bm 1(\triangleAa_{ja}>\hypersparsity)$ \Comment{Hard thresholding}
        \EndFor
        \State $\triangleA_{j} \gets \tilde{\mathcal P}(\triangleAb_{j\cdot})$ \Comment{Projection on the simplex}
    \EndFor
    \State{\Return $\triangleA$}
    \EndFunction
\end{algorithmic}
\end{algorithm}

We next derive statistical guarantees for the \textsc{HTSP}-algorithm. For that purpose, define the loss function
\begin{equation*}
        L_{\infty, 2}(A, A') = \begin{cases}
        \min_{P \in S_{K} } \| A- A'P \|_{\infty,2} & K= K', \\
        + \infty & K \ne K',
        \end{cases}
\end{equation*}
where $A \in \R^{d \times K}, A' \in \R^{d \times K'}$ are two matrices of possibly different column dimensions,  where
$S_K$ is the group of all $K \times K$ permutation matrices and where
\begin{equation*}
    \|A\|_{\infty,2} := \max_{j \in [d]} \|A_{j\cdot}\|_{2} 
    = 
    \max_{j \in [d]} \left( \sum_{a \in [K]} |A_{ja}|^2 \right)^{1/2}.
\end{equation*}
Note that the choice of the $\|\,\cdot\,\|_{\infty,2}$-norm is mostly made for mathematical convenience; by equivalence of norms, the statements to follow can be transferred to any other norm of interest at the cost of additional finite factors depending on $d$ and $K$.
Further, let
\begin{align}
\label{eq:s-sparsity}
s:=\max_{j \in [d]} |\{a \in [K]: \bar A_{ja}>0\}| \in [K]
\end{align}
denote the row-sparsity index of the population matrix $\bar A$, and, recalling Lemma~\ref{lem:tail-probabilities-maxlinear-stdf}, let $D_1 = \supp(\bar A_{\cdot 1}), \dots, D_K = \supp(\bar A_{\cdot K})$ denote the extremal directions of $\bm X$; note that these are pairwise different in view of the pure variable assumption.

\begin{theorem}[Statistical guarantees for the \textsc{HTSP}-Algorithm]
    \label{thm:stat_guarantees_Abar} 
    Suppose $\bm X_1, \dots, \bm X_n$ is an i.i.d.\ sample from a random vector $\bm X$ satisfying Condition~\ref{cond:stdf-max-linear}, with unknown parameter $(K, \bar A) \in \Theta_L$ with $\Theta_L$ from \eqref{eq:Theta-L} (i.e., $\bar A$ satisfies Condition~\ref{cond:purevar}). 
    Then, for any $\delta\in(0,1)$, if we choose $\hyperpurevar, \hypersparsity \in [\kappa_0, \eta/2)$ with $\kappa_0=\kappa_0(\delta, n, \knon, d)$ from \eqref{eq:kappa0} and $\eta$ from \eqref{eq:signal-strength-eta}, we have that the following statements are true with probability at least $1-\delta$:
    \begin{compactenum}[(i)]
        \item \label{item:thm_stat_guarantees_Abar_1}
        $L_{\infty, 2}(\triangleA, \bar{A}) \le 2\sqrt{s} \hypersparsity$; 
        \item \label{item:thm_stat_guarantees_Abar_3}
        There exists a permutation matrix $P \in \mathbb{R}^{K \times K}$ such that $\supp( (\bar{A}P)_{j \cdot}) = \supp(\triangleA_{j \cdot})$ for any $j \in J$, and hence $\hat{s} = s$. 
        \item \label{item:thm_stat_guarantees_Abar_5}
        There exists a permutation $\pi$ on the set [K] such that  $\{\hat  D_1, \dots, \hat D_{\hat K}\} = \{D_{\pi(1)}, \dots, D_{\pi(K)} \}$.
    \end{compactenum}
\end{theorem}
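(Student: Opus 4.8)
Everything becomes deterministic once we condition on the good event. I would work on the event $\Omega$ of probability at least $1-\delta$ on which the uniform bound $\sup_{j\neq\ell}|\hat\chi_{n,\knon}(j,\ell)-\chi(j,\ell)|\le\kappa_0$ holds; this is the event driving the proof of Theorem~\ref{thm:consistency_K_pure2}, so on $\Omega$ we also have $\hat K=K$, $\hat I=I$ and $\hat I_a=I_{\pi(a)}$ for some permutation $\pi$ of $[K]$. Replacing $\bar A$ by $\bar A P_\pi$ — which changes neither $L_{\infty,2}(\triangleA,\,\cdot\,)$, nor the collection $\{D_a\}_a$, nor the validity of Condition~\ref{cond:purevar} — we may assume $\hat I_a=I_a$ for all $a$. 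It then suffices to prove, on $\Omega$, the row/column identities $\max_{j\in[d]}\|\triangleA_{j\cdot}-\bar A_{j\cdot}\|_2\le 2\sqrt s\,\hypersparsity$, $\supp(\triangleA_{j\cdot})=\supp(\bar A_{j\cdot})$ for all $j\in J$, and $\supp(\triangleA_{\cdot a})=\supp(\bar A_{\cdot a})=D_a$ for all $a\in[K]$: since $\hat K=K$ the first gives \eqref{item:thm_stat_guarantees_Abar_1}; combined with $|\supp(\triangleA_{j\cdot})|=1=|\supp(\bar A_{j\cdot})|$ for $j\in I$ the second gives $\hat s=s$ and \eqref{item:thm_stat_guarantees_Abar_3}; and reading supports columnwise (with $\hat D_a:=\supp(\triangleA_{\cdot a})$) the third gives \eqref{item:thm_stat_guarantees_Abar_5} after undoing the relabeling.

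\textbf{Pure rows.} For $j\in I$ there is a unique $a$ with $j\in I_a=\hat I_a$; the pure-variable condition plus the row-sum-one normalisation force $\bar A_{j\cdot}=\bm e_a$, and by construction $\triangleA_{j\cdot}=\bm e_a$. Thus the estimator is exact on $I$ and contributes nothing to the loss.

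\textbf{Non-pure rows.} Fix $j\in J$, put $S=\supp(\bar A_{j\cdot})$ and $m=|S|\le s$. Since $\chi(j,\ell)=\bar A_{ja}$ for $\ell\in I_a$ (cf.\ \eqref{eq:chiA}), averaging and using the bound on $\Omega$ gives $|\triangleAa_{ja}-\bar A_{ja}|\le\kappa_0\le\hypersparsity$ for every $a$. In the hard-thresholding step I would invoke the dichotomy $\bar A_{ja}\in\{0\}\cup[\eta,1-\eta]$ for $j\in J$ (the value $1$ is impossible, else $j$ would be pure): if $\bar A_{ja}=0$ then $\triangleAa_{ja}\le\hypersparsity$ and the entry is zeroed; if $\bar A_{ja}\ge\eta$ then $\triangleAa_{ja}\ge\eta-\kappa_0>\hypersparsity$, precisely because $\hypersparsity<\eta/2$, and the entry survives unchanged. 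Hence $\supp(\triangleAb_{j\cdot})=S$ and $|\triangleAb_{ja}-\bar A_{ja}|\le\kappa_0$ for all $a$.

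\textbf{The projection step, and why it is the crux.} The only genuinely new point is that $\tilde{\mathcal P}$ keeps the support $S$ and perturbs the row only by a bounded additive shift. With $\bm w=\triangleAb_{j\cdot}|_S\in(0,1]^m$ and $e_a=\triangleAa_{ja}-\bar A_{ja}$ we have $\|\bm w\|_1-1=\sum_{a\in S}e_a$, so $\tfrac1m(\|\bm w\|_1-1)\le\kappa_0$, while $w_{(m)}=\min_{a\in S}\triangleAa_{ja}\ge\eta-\kappa_0>\kappa_0$; therefore the index $b=m$ satisfies the defining inequality for $\rho(\bm w)$, forcing $\rho(\bm w)=m$, so the shift $\tau(\bm w)=\tfrac1m\sum_{a\in S}e_a$ obeys $|\tau(\bm w)|\le\kappa_0$ and no coordinate is truncated. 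Consequently $\supp(\triangleA_{j\cdot})=S$ and $\triangleA_{ja}-\bar A_{ja}=e_a-\tau(\bm w)$ for $a\in S$, whence $\|\triangleA_{j\cdot}-\bar A_{j\cdot}\|_2\le 2\sqrt m\,\kappa_0\le 2\sqrt s\,\hypersparsity$. Combining the two row cases and reading off supports as above completes the proof. I expect the verification $\rho(\bm w)=m$ — that the simplex projection never sends a signal coordinate to zero, which is where $\hypersparsity<\eta/2$ re-enters — to be the only step requiring real care; everything else is Theorem~\ref{thm:consistency_K_pure2} together with the triangle inequality.
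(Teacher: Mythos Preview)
Your proof is correct and follows essentially the same route as the paper's: work on the high-probability event where $\sup_{j\neq\ell}|\hat\chi_{n,\knon}(j,\ell)-\chi(j,\ell)|\le\kappa_0$, inherit $\hat K=K$, $\hat I=I$, $\hat I_a=I_{\pi(a)}$ from Theorem~\ref{thm:consistency_K_pure2}, reduce to $\pi=\mathrm{id}$, handle pure rows exactly, and for non-pure rows bound the preliminary estimate entrywise and control the simplex-projection shift $\tau$. Your organisation is in fact slightly tighter than the paper's: by invoking the dichotomy $\bar A_{ja}\in\{0\}\cup[\eta,1-\eta]$ for $j\in J$ together with $\hypersparsity<\eta/2$ right away, you obtain $\supp(\triangleAb_{j\cdot})=\supp(\bar A_{j\cdot})$ immediately and avoid the paper's intermediate split into $E_1+E_2$ (which allows for $\hat S_\ell\subsetneq S_\ell$ and only later closes the gap when proving \eqref{item:thm_stat_guarantees_Abar_3}). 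The projection analysis---showing $\rho(\bm w)=m$ so that $\tau$ is the full average deviation and no coordinate is clipped---is exactly the paper's claim (B3) combined with the support-preservation argument in its proof of \eqref{item:thm_stat_guarantees_Abar_3}.
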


Recalling that $\kappa_0$ from \eqref{eq:kappa0} provides a uniform non-stochastic upper bound on the absolute estimation error of the pairwise tail correlations, Part (i) of Theorem~\ref{thm:stat_guarantees_Abar} essentially means that  $\bar A$ can estimated as accurately as the pairwise tail correlations (up to a linear dependence on the sparsity index $s$) if $\hypersparsity$ is chosen of the same order as $\kappa_0$. Practical advice on how to select $\hypersparsity$ (and $\hyperpurevar$ and $k$) is given in Section~\ref{sec:simulations} and \ref{sec:case_studies}. Furthermore, Part (iii) explicitly states that extremal directions are accurately estimated.

\subsection{Asymptotic statistical guarantees in high dimensions}
\label{subsec:high-dimensions}

The results in the previous two theorems were derived for data of fixed dimension and sample size. In this section, we provide consistency results for the case where the dimension $d= d_n$ may depend on $n$ and actually be larger than $n$. More precisely, we assume that, for each $n \in \N$, we observe an i.i.d.\ sample $(\bm{X}_1,\dots,\bm{X}_n) = (\bm{X}_1^{(n)},\dots,\bm{X}_n^{(n)})$ from a $d_n$-variate random vector $\bm X^{(n)}$ satisfying Condition~\ref{cond:stdf-max-linear}, that is, $\bm X^{(n)}$ has continuous marginal cdfs and its stable tail dependence $L^{(n)}$ is of the form given in \eqref{eq:stable-tail-dp-A} for some unknown parameter $\theta^{(n)} = (K^{(n)}, \bar{A}^{(n)}) \subset \Theta_L=\Theta_L(d_n)$, where $\Theta_L$ is as in \eqref{eq:Theta-L}. 
Since we are aiming for almost sure convergence statements, it is assumed that all random variables are defined on the same probability space.
For notational convenience, we occasionally suppress the upper index $n$.

The algorithms from the previous two sections depend on three hyperparameters, $k, \hyperpurevar$ and $\hypersparsity$. The consistency statements below are based on explicit choices that depend on suitable second order conditions.
To formally introduce the latter, let $\mathcal{C}_2$ denote the set of bivariate copulas for which the tail correlation coefficient $\chi$  exists. More precisely, if $\bar C(u,v) = u+v-1+C(1-u,1-v) = \Prob_{(U,V) \sim C}(1-U \le u, 1-V\le v)$ denotes the survival copula associated with $C$, it is assumed that the limit $\chi := \lim_{t \rightarrow \infty} \chi_t \in[0,1]$ exists, where $\chi_t = t \bar{C}(1/t,1/t)$ for $t >  1$. For parameters 
$\rho \in (0,\infty) $  and $M \in[0, \infty)$, let
\[
    \mathfrak{X}(\rho, M) := 
    \left\{ C \in \mathcal{C}_2 \mid \forall t > 1 : \left| \chi_t - \chi \right|  \le M t^{-\rho}\right\},
\]
Unless $M=0$ (in which case $\chi_t=\chi$ does not depend on $t$ and $\rho$ is uninformative), the parameter $\rho$ measures the speed of convergence of $\chi_t$ to $\chi$: the larger $\rho$, the faster the convergence. Note that $\mathfrak{X}(\rho, M)$ is increasing in $M$ and decreasing in $\rho$.

In a suitable subclass of the max-linear model from Condition~\ref{cond:max-factormodel} (without noise), all bivariate marginal copulas $C_{j,\ell}$ of $\bm X$ belong to $\mathfrak X(1, M_{j,\ell})$ for some $M_{j,\ell} \ge 0$.

\begin{proposition} \label{prop:max-linear-frechet-second-order}
For $d\in \N_{\ge 2}$ and $K\in \N$, consider the max-linear model defined by
\[
\bm{X} = A \times_{\max} \bm{Z}, \quad A \in \Acp(K),
\]
where \( \bm Z=(Z_1, \dots, Z_K)^\top \) has i.i.d. Fréchet random margins with tail index $\alpha > 0$. Then, for any pair of indices \( 1 \le j < \ell \le d \), the bivariate marginal copula \( C_{j,\ell} \) of $(X_j, X_\ell)$ belongs to the class 
$\mathfrak{X}(1, M_{j,\ell})$, where $M_{j,\ell} = 1 - \chi(j,\ell) = 1- \sum_{a \in [K]} \bar{A}_{ja} \wedge \bar{A}_{\ell a} \in [0,1]$, with $\bar A=f(A,\alpha)$ the matrix defined in \eqref{eq:barA}.
\end{proposition}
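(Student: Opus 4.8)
The plan is to compute the bivariate survival copula $\bar C_{j,\ell}$ of $(X_j, X_\ell)$ explicitly and then read off the rate of convergence of $\chi_t$ to $\chi$. Since $\bm Z$ has i.i.d.\ $\alpha$-Fréchet margins, $Z_a$ has cdf $F_Z(z) = \exp(-z^{-\alpha})$ on $(0,\infty)$, and $X_j = \max_{a\in[K]} A_{ja} Z_a$ is again $\alpha$-Fréchet with scale $s_j = s_j(\alpha,A) = (\sum_{a} A_{ja}^\alpha)^{1/\alpha}$, i.e.\ $\Prob(X_j \le x) = \exp(-s_j^\alpha x^{-\alpha})$. First I would compute the joint survival function: using independence of the $Z_a$,
\[
\Prob(X_j \le x, X_\ell \le y) = \prod_{a\in[K]} \Prob(A_{ja} Z_a \le x,\ A_{\ell a} Z_a \le y) = \exp\Big( - \sum_{a\in[K]} \max\big( A_{ja}^\alpha x^{-\alpha},\ A_{\ell a}^\alpha y^{-\alpha} \big)\Big).
\]
Transforming to uniform margins via $u = \exp(-s_j^\alpha x^{-\alpha})$, $v = \exp(-s_\ell^\alpha y^{-\alpha})$, so that $x^{-\alpha} = -\log(u)/s_j^\alpha$ and $y^{-\alpha} = -\log(v)/s_\ell^\alpha$, the copula of $(X_j,X_\ell)$ is the Gumbel--Hüsler--Reiss--type (in fact a symmetric-logistic / max-linear) copula
\[
C_{j,\ell}(u,v) = \exp\Big( \sum_{a\in[K]} \max\big( \bar A_{ja}\log u,\ \bar A_{\ell a}\log v\big) \Big),
\]
where $\bar A_{ja} = A_{ja}^\alpha/s_j^\alpha$ as in \eqref{eq:barA}; this is consistent with the general formula $L_{K,\bar A}$ from \eqref{eq:stable-tail-dp-A} since $-\log C_{j,\ell}(e^{-x_j}, e^{-x_\ell}) \to L_{\{j,\ell\}}(x_j,x_\ell)$ as the arguments shrink.

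Next I would evaluate $\chi_t$. By definition $\chi_t = t\,\bar C_{j,\ell}(1/t,1/t)$ with $\bar C_{j,\ell}(p,p) = 2p - 1 + C_{j,\ell}(1-p,1-p)$. Setting $p = 1/t$ and writing $w = 1-1/t$,
\[
C_{j,\ell}(w,w) = \exp\Big( \log(w) \sum_{a\in[K]} \max(\bar A_{ja}, \bar A_{\ell a}) \Big) = w^{\,\gamma}, \qquad \gamma := \sum_{a\in[K]} \max(\bar A_{ja}, \bar A_{\ell a}).
\]
Since $\bar A$ has row sums $1$, we have $\gamma = 2 - \sum_{a} \min(\bar A_{ja}, \bar A_{\ell a}) = 2 - \chi(j,\ell)$ by \eqref{eq:chiA}, so $\gamma \in [1,2]$. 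Hence
\[
\chi_t = t\Big( \tfrac{2}{t} - 1 + (1-\tfrac1t)^{\gamma}\Big) = t\Big( (1-\tfrac1t)^{\gamma} - (1 - \tfrac2t)\Big).
\]
Now $\chi = \lim_{t\to\infty}\chi_t = 2 - \gamma = \chi(j,\ell)$ (differentiate or expand: $(1-1/t)^\gamma = 1 - \gamma/t + O(1/t^2)$, giving $\chi_t = 2-\gamma + O(1/t)$). The remaining task is the quantitative bound $|\chi_t - \chi| \le M_{j,\ell}\, t^{-1}$ with $M_{j,\ell} = 1 - \chi(j,\ell) = \gamma - 1$, uniformly for all $t > 1$. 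Writing $h(t) := \chi_t - \chi = t(1-1/t)^\gamma - t(1-1/t) - 1 + (\gamma-1) = t(1-1/t)\big[(1-1/t)^{\gamma-1} - 1\big] + (\gamma-1)$, one checks that for $\gamma \in [1,2]$ the function $t \mapsto t h(t)$ is bounded; the natural route is convexity/monotonicity of $x\mapsto (1-x)^{\gamma-1}$ on $x = 1/t \in (0,1)$ together with the elementary inequality $0 \le 1 - (1-x)^{\beta} \le \beta x$ valid for $\beta = \gamma - 1 \in [0,1]$ and $x\in[0,1]$. Combining this with $t(1-1/t) \le t$ yields $|h(t)| \le (\gamma-1) \cdot \tfrac1t \cdot t \cdot \tfrac1t + \dots$; more carefully, rearranging $h(t) = (\gamma-1) - (t-1)\big(1-(1-1/t)^{\gamma-1}\big)$ and applying $0 \le 1-(1-1/t)^{\gamma-1} \le (\gamma-1)/t$ gives $0 \le (t-1)(1-(1-1/t)^{\gamma-1}) \le (\gamma-1)(t-1)/t \le \gamma-1$, so $|h(t)| \le \gamma - 1 = M_{j,\ell}$, but we need the $t^{-1}$ decay, which follows from the sharper two-sided estimate $(\gamma-1)/t - (\gamma-1)(\gamma-2)/(2t^2) + \dots \le 1-(1-1/t)^{\gamma-1}$, i.e.\ a second-order Taylor bound with controlled remainder.

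The main obstacle is precisely this last step: obtaining the clean bound $|\chi_t - \chi| \le M_{j,\ell} t^{-\rho}$ with $\rho = 1$ and the specific constant $M_{j,\ell} = \gamma - 1$, uniformly down to $t > 1$ (not just asymptotically). I expect this to require a careful elementary analysis of the scalar function $h(t)$ — most likely via the mean value theorem applied to $s \mapsto s^{\gamma-1}$ on $[1-1/t, 1]$, giving $1 - (1-1/t)^{\gamma-1} = (\gamma-1)\xi^{\gamma-2}\cdot\tfrac1t$ for some $\xi \in (1-1/t,1)$, and then bounding $\xi^{\gamma-2} \le (1-1/t)^{\gamma-2}$ (since $\gamma - 2 \le 0$) or $\le 1$ as needed, so that $h(t) = (\gamma-1)\big(1 - (t-1)\xi^{\gamma-2}/t\big)$ and one extracts the $1/t$ factor by writing $1 - (t-1)\xi^{\gamma-2}/t = 1 - \xi^{\gamma-2} + \xi^{\gamma-2}/t$ and controlling $1-\xi^{\gamma-2} = O(1/t)$ again by the mean value theorem. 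Everything else — the copula computation, the identification of $\chi$ with $\chi(j,\ell)$ via \eqref{eq:chiA}, and the membership in $\Acp(K)$ being enough for $s_j>0$ — is routine.
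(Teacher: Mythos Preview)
Your computation of the copula and of $\chi_t = 2 - t + t(1-1/t)^\gamma$ with $\gamma = 2-\chi(j,\ell)\in[1,2]$ matches the paper exactly. The only issue is in the final quantitative step, where you need $|h(t)| \le (\gamma-1)/t$ for all $t > 1$. Your proposed route---after one application of the mean value theorem, split $1-(1-1/t)\xi^{\gamma-2} = (1-\xi^{\gamma-2}) + \xi^{\gamma-2}/t$ and bound each piece by $O(1/t)$---does not work: since $\gamma-2\le 0$, the second piece $\xi^{\gamma-2}/t$ can be as large as $(1-1/t)^{\gamma-2}/t$, which is unbounded as $t\downarrow 1$. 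The two pieces have opposite signs, and bounding them separately destroys precisely the cancellation you need. A simple fix: do not split. From $\xi\le 1$ and $\gamma-2\le 0$ you get $\xi^{\gamma-2}\ge 1$, hence $(1-1/t)\xi^{\gamma-2}\ge 1-1/t$; from $\xi\ge 1-1/t$ and $2-\gamma\le 1$ you get $(1-1/t)\xi^{\gamma-2}\le (1-1/t)^{\gamma-1}\le 1$. Together these give $h(t) = (\gamma-1)\big[1-(1-1/t)\xi^{\gamma-2}\big]\in[0,(\gamma-1)/t]$, which is the claim.

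The paper handles this last step differently: it writes $|\chi_t-\chi| = t^{-1}g(t,\gamma)$ with $g(t,c) = t\big[c - t(1-(1-1/t)^c)\big]$, notes $g(1,c)=c-1$, and asserts that $t\mapsto g(t,c)$ is non-increasing on $[1,\infty)$ for $c\in[1,2]$, giving the bound immediately. A minor side remark: your statement that ``$\Acp(K)$ being enough for $s_j>0$'' is not correct---$\Acp$ only guarantees non-zero column sums, whereas $s_j>0$ needs the $j$th row of $A$ to be non-zero; this is tacitly required for $\bar A$ to be well-defined.
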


For the following result, let $C_{j,\ell}^{(n)}$ denote the bivariate copula of $(X^{(n)}_j, X_\ell^{(n)})$. Further, let
\begin{align*}
\eta = \eta_n = \eta(\bar A^{(n)}) 
&= 
\min\big\{ \bar A_{ja}^{(n)} \wedge (1-\bar A_{ja}^{(n)}) \mid j \in [d], a \in [K^{(n)}] \text{ s.t. } \bar A_{ja}^{(n)} \notin\{ 0,1\}\big\} \in (0,1/2], \\
s = s_n = s(\bar A^{(n)})
&=
\max_{j \in [d]} |\{a \in [K^{(n)}]: \bar A_{ja}^{(n)}>0\}| \in [K^{(n)}],
\end{align*}
denote the signal strength parameter and the the row-sparsity index of $\bar A^{(n)}$, respectively; see also \eqref{eq:signal-strength-eta} and \eqref{eq:s-sparsity}.

\begin{theorem}
    \label{thm:high_dim2}
    For each $n\in \N$, let $\bm X_1^{(n)}, \dots, \bm X_n^{(n)}$ be an i.i.d.\ sample from a $d_n$-variate random vector $\bm X^{(n)}$ satisfying Condition~\ref{cond:stdf-max-linear} with unknown parameter $\theta^{(n)}=(K^{(n)}, \bar A^{(n)}) \in \Theta_L(d_n)$, with $\Theta_L(d_n)$ from \eqref{eq:Theta-L}, i.e., $\bar A^{(n)}$ satisfies Condition~\ref{cond:purevar}. Here, all random variables are assumed to be defined on the same probability space. Moreover, assume that $d_n \ge 2$ satisfies $\ln(d_n)=o(n)$, that there exists $\rho_0>0$ and $M_0>0$ such that each bivariate copula $C_{j,\ell}^{(n)}$ is in $\mathfrak{X}(\rho_0,M_0) = \bigcup_{\rho \ge \rho_0}\bigcup_{M \in [0, M_0]} \mathfrak{X}(\rho,M)$ and  that $ \liminf_{n\to\infty} \eta_n>0$.
    Then, 
    for any $r=r_n \in [\rho_0, \infty)$ and $M=M_n\in (0, M_0]$ such that $C_{j,\ell}^{(n)} \in \mathfrak{X}(r_n,M_n)$ for all $1 \le j < \ell \le d_n$ (for instance, $r=\rho_0$ and $M=M_0$), choosing 
    \begin{align} \label{eq:def-k-asymptotic-2}
    k = \lfloor c_k \left( \ln(4dn^2) \right)^{\frac{1}{2r+1}} n^{\frac{2r}{2r+1}}\rfloor \qquad \text{ and }  \qquad \hyperpurevar = \hypersparsity = c_\hyperpurevar \left(\ln(4dn^2) / {n} \right)^{\frac{r}{2r+1}}
    \end{align}
    for some $c_k =c_k(n), c_\hyperpurevar=c_\hyperpurevar(n)>0$ that are bounded away from zero and infinity and satisfy $c_{\hyperpurevar} \ge M c_k^r + 9c_k^{-1/2} + 5 c_k^{-1}$ (for instance, $c_k=c_0>0$ fixed and $c_\hyperpurevar=M_0 c_0^{\rho_0}+9c_0^{-1/2}+5c_0^{-1}$), we have
    \[
    \Prob\big\{ \hat I_n = I_n, \hat K_n = K_n, \hat s_n=s_n \textnormal{ for all but finitely many $n$} \big\} = 1 
    \]
    and
\begin{align} \label{eq:rate-Ahat}
    L_{\infty,2}(\triangleA, \bar{A}) = O_{a.s.}\Big(\sqrt{s} \Big( \frac{\ln(nd)}{n} \Big)^{\frac{r}{2r+1}} \Big). 
\end{align}
\end{theorem}

Note the resemblance of the rate in \eqref{eq:rate-Ahat} with minimax-optimal rates in nonparameteric regression, and that rates of the form $n^{-r/(2r+1)}$ also appear as `optimal' estimation rates in bivariate extremes under second order conditions; see \cite{DreesHuang1998} or Section 4 in \cite{BucherZou2019}. Our procedure hence recovers such optimal rates  up to logarithmic terms and the sparsity index $\sqrt s$.
Finally, note that, in the situation from Proposition~\ref{prop:max-linear-frechet-second-order}, the condition ``$C_{j,\ell}^{(n)} \in \mathcal X(r_n,M_n)$ for all $1 \le j < \ell \le d_n$'' is for instance satisfied for the choice $r=M=1$, which is independent of $n$.

\section{Monte Carlo Simulations}
\label{sec:simulations}

A large scale Monte Carlo simulation study was conducted to evaluate the finite-sample performance of the proposed estimators, with particular attention paid to the sensitivity of the methods depending on the hyperparameters.  The simulation is based on $N_{\text{sim}} = 100$ independent replications for each parameter configuration described below.

\paragraph{Simulation design.}
The experimental framework is based on the following choice of key model parameters:
\begin{compactenum}
    \item Sample size: $n \in \{ 1000,2000,\dots, 10\,000\}$;
    \item Dimension: $d\in \{100, 1000\}$;
    \item Number of factors: $K \in \{5, 20\}$;
    \item Signal strength: $\eta = 0.2$;
    \item Sparsity index: $s=4$;
    \item Model: Linear-NoNoise, Linear-Noise, Max-Linear-NoNoise, Max-Linear-Noise.
\end{compactenum}
More precisely, regarding the last point,  we consider the linear and max-linear model with $K$ i.i.d.\ Pareto(1) factors from Model~\ref{cond:factormodel} and \ref{cond:max-factormodel}, respectively, each of the two either without noise, or with i.i.d.\ Pareto(2) noise variables. 
The true matrix $\bar{A}$ for each of the configurations is chosen randomly as follows:
\begin{compactenum}
    \item Without loss of generality, we structure $\bar{A}$ such that its first $K \times K$ block forms the identity matrix. This ensures the existence of at least $K$ pure variables, as required in Condition~\ref{cond:purevar}.
    \item The $(K+1)$-th row contains exactly $s$ non-zero entries, with their positions drawn uniformly at random from $\{1,\ldots,K\}$.
    \item For rows $i \in\{ K+2,\ldots,d\}$, the number of non-zero entries $s_i$ is drawn uniformly from $\{1,\ldots,s\}$, with their positions sampled randomly from $\{1,\ldots,K\}$. This construction may introduce additional pure variables beyond the first $K$.    
    \item For each non-pure row, all non-zero entries are sampled uniformly from the $(s_i-1)$-dimensional unit simplex. We enforce the minimum/maximum loading constraints $\eta \leq \bar{A}_{ja} \leq 1-\eta$ through rejection sampling, repeating the draws until all entries satisfy the condition.
\end{compactenum}
By construction, Condition \ref{cond:purevar} is met, the signal strength of $\bar A$ is at least $\eta$ and the sparsity index of $\bar A$ is $s$. This yields a full factorial design with
\[
|\text{Total scenarios}| = N_n \times N_d \times N_K \times N_\eta \times N_s \times N_{\text{model}} = 160,
\]
where for each scenario we generate $N_{\text{sim}} = 100$ independent replicates of sample size $n$.
\paragraph{Hyperparameter design.} For each sample, we apply our algorithms with the following hyperparameter configurations:
\begin{compactenum}
    \item \textbf{Effective number of large order statistics $k$}:
    \begin{compactitem}
        \item Fixed: $k \in \{0.01n, 0.05n\}$;
        \item Data-adaptive: $k = \lfloor 0.25 (\ln(4dn^2))^{1/3} n^{2/3} \rfloor$ (motivated by Theorem~\ref{thm:high_dim2}).
    \end{compactitem}
    
    \item \textbf{Threshold parameters $\hyperpurevar$ and $\hypersparsity$}: 
    \begin{compactitem}
        \item Fixed: $\hyperpurevar = \hypersparsity = 0.1$;
        \item Data-adaptive: $\hyperpurevar = \hypersparsity = 0.75 (\ln(4dn^2)/n)^{1/3}$ (per Theorem~\ref{thm:high_dim2}).
    \end{compactitem}
\end{compactenum}
We evaluate two distinct strategies: the fixed approach where $(k, \hyperpurevar) \in \{0.01n, 0.05n\} \times \{0.1\}$, and the data-adaptive approach where both $k$ and $\hyperpurevar$ are set via their adaptive formulas. 

\paragraph{Performance Metrics.}
Recall that the loading matrix $\bar{A}$ and our estimator $\triangleA$ are not directly comparable, since they may differ by a permutation matrix. To evaluate the performance of our method when $\hat{K} = K$, we align the estimated components $\triangleA$ with the ground truth $\bar{A}$ by solving a matching problem. Specifically, we seek the permutation matrix $P \in S_K$ that minimizes the Frobenius norm $\| \triangleA - \bar{A}P \|_F$. This corresponds to finding the optimal assignment between columns of $\triangleA$ and $\bar{A}$ that best aligns the two matrices. To solve this assignment problem efficiently, we employ the Hungarian algorithm \citep{kuhn1955hungarian}. The algorithm takes as input a cost matrix $C_H \in \mathbb{R}^{K \times K}$, where each entry is defined as
\[
    C_H(a, b) = \| \triangleA_{\cdot a} - \bar{A}_{\cdot b} \|_2^2, \quad \text{for } a, b \in [K].
\]
In this matrix, each element $C_H(a, b)$ quantifies the squared Euclidean-distance between the $a$-th column of the ground truth and the $b$-th column of the estimate. The Hungarian algorithm then identifies the column-wise permutation that minimizes the total cost, yielding the optimal mapping $P$ for evaluation. Thus, we can compare the performance of the estimator with the permuted ground truth $\tilde{A} = \bar{A} P$. Recall the notation $D_a$ for an extremal direction from \eqref{eq:softcluster}, and let $\tilde{D}_a = \{j \in [d] : \tilde{A}_{ja} > 0\}$ and $\hat{D}_a = \{j \in [d] : \triangleA_{ja} > 0\}$. The performance of the algorithms may then be assessed in terms of the following  performances metrics:
\begin{compactenum}
    \item \textbf{Exact recovery rates}:
    \begin{compactitem}
        \item Factor dimension: proportion of cases for which $\hat K = K$;
        \item Sparsity index: proportion of cases for which $\hat{s} = s$;
        \item Set of pure variables: proportion of cases for which $\hat{I} = I$.
    \end{compactitem}
    \item \textbf{Classification errors}:
    \begin{compactitem}
        \item Total False Negative Proportion (TFNP):
        \[ \text{TFNP} =\frac{\sum_{a \in [K]} |\tilde{D}_{a} \cap \hat{D}_a^c|}{\sum_{a \in [K]} |\tilde{D}_a|}, \]
        \item Total False Positive Proportion (TFPP):
        \[ \text{TFPP} = \frac{\sum_{a \in [K]} |\tilde{D}_{a}^c \cap \hat{D}_a|}{\sum_{a \in [K]} |\tilde{D}_a^c|}; \]
    \end{compactitem}
    \item \textbf{Matrix estimation error}:
    \[ \|\triangleA - \tilde{A}\|_{\infty,2}. \]
\end{compactenum}
Note that all metrics except the first two are only well-defined if $\hat{K} = K$.

\begin{figure}[thp!]
    \centering
    \includegraphics[scale=0.37]{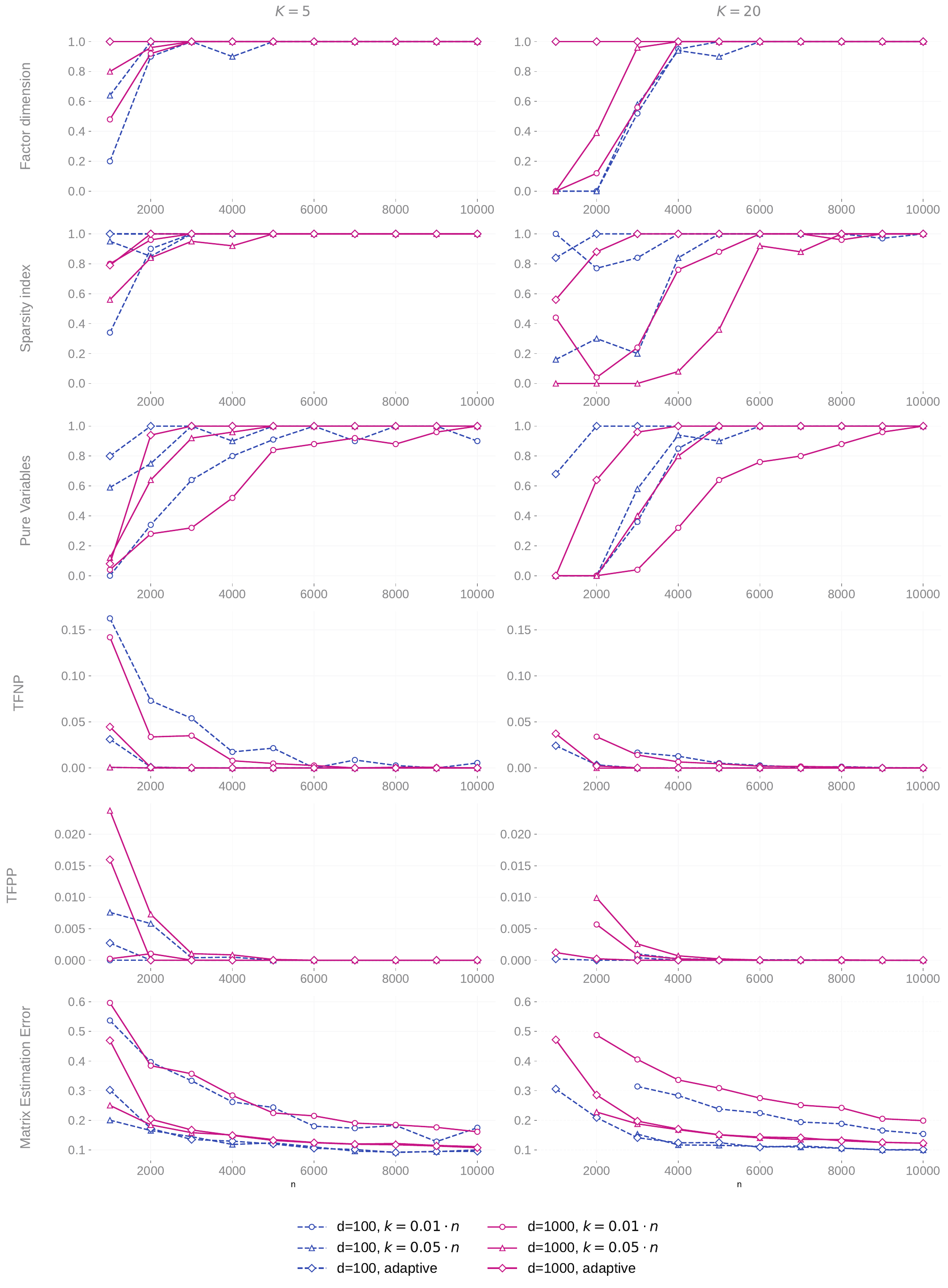}
    \caption{Performance metrics for the linear model with noise across different parameter combinations. Each rows depicts in order: (1) recovery rate of latent factors, 
        (2) recovery rate of sparsity, (3) recovery rate of pure variables, 
        (4) TFNP, (5) TFPP, and (6) matrix estimation Error. Each metric is plotted as a function of sample size $n$, 
        comparing dimensions $d \in \{100, 1000\}$, and the three choices of $(k,\hyperpurevar, \hypersparsity)$ described in the main text. Results are stratified by $K=5$ (left column) and $K=20$ (right column).}
    \label{fig:result_sum_noise}
\end{figure}

For the sake of brevity, the results are only partially reported; more specifically, we focus on the noisy linear factor model, as the other models provide qualitatively similar results (see Appendix~\ref{sec:additional-simulation-results} for details).
The results are summarized in Figure~\ref{fig:result_sum_noise}, which is a multi-panel plot where rows represent performance metrics and columns contrast number of latent factors ($K = 5$ vs.\ $K = 20$). It can be observed that the performance improves with sample size~$n$ for both $d = 100$ and $d = 1000$, though convergence is slower for $d = 1000$, reflecting higher sample complexity in high dimensions. The data-adaptive hyperparameter selection for $(k, \hyperpurevar, \hypersparsity)$ consistently matches or outperforms the two deterministic choices ($k \in \{0.01, 0.05\} \cdot n$, $\hyperpurevar=\hypersparsity = 0.1$), particularly for $d = 1000$, underscoring its advantage for scalability. These empirical trends align with Theorem~\ref{thm:high_dim2}, which guarantees dimension-free error bounds under adaptive tuning. 
The recovery task proves more challenging for $K = 20$ (right column) when using fixed tuning parameters. However, adaptive methods significantly narrow this performance gap. 
Remarkably, TFNP (false negative rate) exceeds TFPP (false positive rate) across all configurations, suggesting that the method yields smaller but more reliable extremal directions.
Under certain conditions, specifically for the sparsity recovery rate with $K=20$ and non-adaptive hyperparameters, the evaluation metrics can show non-monotonic behavior for small sample sizes. This is likely a consequence of the chosen parameters $k$ and $\kappa$ inadequately balancing the bias-variance tradeoff, thereby degrading performance. Consistent with this explanation, the adaptive estimator, which is optimized for this tradeoff, displays monotonic behavior.

\section{Case Studies}
\label{sec:case_studies}

We demonstrate our methods through two case studies. The first involves dietary data, where our model fits well and yields results, including estimated extremal directions, that align with those from other methods. The second case study focuses on wind speed gusts, revealing model limitations. We discuss these shortcomings and propose a potential model extension in the concluding Section~\ref{sec:conclusion}.

\subsection{Dietary intakes data}
\label{sec:dietary}

We evaluate our method on the NHANES 2015–2016 Day 1 total nutrient intake dataset (DR1TOT\_I; $N = 9544$), publicly available at \url{https://wwwn.cdc.gov/Nchs/Data/Nhanes/Public/2015/DataFiles/DR1TOT_I.xpt}. Prior work has examined dependence among extreme nutrient intakes due to potential links to adverse health effects \citep{janssen2020k,krali2025heavy}. Although the dataset reports intakes for 39 nutrients, additional diagnostics indicate that our model does not provide an adequate fit in the full 39-dimensional setting (see Section~\ref{sec:supplement_case_studies} for details), motivating a focused analysis on a lower-dimensional subset. Following \cite{krali2025heavy}, we focus on high intakes of the following $d=6$ specific nutrients: $\alpha$-carotene (AC), $\beta$-carotene (BC), lutein + zeaxanthin (LZ), retinol (RET), vitamin A (VA), and vitamin K (VK). This set of nutrients was identified by \cite{krali2025heavy} as the largest connected component of a directed acyclic graph generated by a Recursive Max-Linear Model under which hidden confounders, such as unobserved genetic factors jointly influencing extreme nutrient intakes, can be neglected. 

After keeping only complete observations, the sample size is reduced to $n = 8327$. The first step of the algorithm consists of selecting the threshold parameter $k$ to compute the empirical tail correlation matrix $\hat{\mathcal{X}}$. Similar as in Section~\ref{sec:simulations}, we set $k = \lfloor 0.25 (\ln(4dn^2))^{1/3} n^{2/3} \rfloor = 284$, which roughly corresponds to the largest $3\%$ of the observations. The second step is to determine suitable values for $\hyperpurevar$ and $\hypersparsity$, for which we proceed as follows. Let $\triangleA{}^{\hyperpurevar,\hypersparsity}$ and $\hat{K}{}^{\hyperpurevar,\hypersparsity}$ denote, respectively, the estimated matrix and the estimated number of factors obtained from Algorithm~\ref{alg:htsp} when using $\hyperpurevar,\hypersparsity \in (0,1/2)$ as tuning parameters. In view of \eqref{eq:chiA}, the corresponding fitted extremal correlation matrix is given by $\tilde{\mathcal{X}}{}^{\hyperpurevar,\hypersparsity}
= ( \sum_{a \in [\hat K^{\hyperpurevar,\hypersparsity}]} \triangleA{}^{\hyperpurevar,\hypersparsity}_{j a} \wedge \triangleA{}^{\hyperpurevar,\hypersparsity}_{\ell a} )_{j, \ell \in [d]}$. To select the tuning parameters, we consider a grid of candidate values for $\hyperpurevar$ and $\hypersparsity$. Specifically, we choose $100$ values for each parameter by scaling the theoretical rate $(\log(4dn^2)/n)^{1/3}$, as introduced in Section~\ref{sec:simulations}, with constants $c$ evenly spaced between $0.25$ and $1.5$. This yields candidate values for $\hyperpurevar$ and $\hypersparsity$ ranging approximately from $0.03$ to $0.2$. The optimal tuning parameters, denoted $\hyperpurevar^*$ and $\hypersparsity^*$, are then chosen as those for which the coefficient of determination $R^2$ between the empirical extremal correlation matrix $\hat{\mathcal{X}}$ and the fitted extremal correlation matrix $\tilde{\mathcal{X}}^{\hyperpurevar,\hypersparsity}$ is maximized. This procedure yields $\hyperpurevar^* = 0.092$ and $\hypersparsity^* = 0.12$; see the left panel of Figure \ref{fig:complete_analysis_dietary}.

As illustrated in the right panel of Figure \ref{fig:complete_analysis_dietary}, the adaptive choice of tuning parameters $\hyperpurevar^*$ and $\hypersparsity^*$ yields $\hat K =3$ factors, with associated pure variables $\hat I_1 = \{\textrm{RET}\}$,  $\hat I_2 = \{\textrm{AC}\}$ and $\hat I_3 = \{\textrm{LZ, VK}\}$. The extremal directions given by the algorithm are $\hat D_1 = \{\textrm{RET, VA}\}$, $\hat D_2 = \{\textrm{AC, BC, VA}\}$ and $\hat D_3 = \{\textrm{LZ, VK, BC, VA}\}$. This factor structure mirrors the source-node configuration in \cite[Figure 6]{krali2025heavy}, where RET and AC emerge as separate source nodes, while LZ and VK constitute an indistinguishable pair. Additionally, the directional pathway shown in their Figure 6 corresponds directly to our estimated extremal directions. To confirm such findings, we compare our extremal directions with two algorithm from the literature, namely CLEF \citep{chiapino2019identifying} and DAMEX \citep{goix2017sparse}. Following the same criterion for selecting extremes, we empirically transform the margins to standard Pareto and retain the $k = 284$ observations with the largest Euclidean norms. The extremal directions identified by the CLEF with tuning parameter of $0.1$ (the default choice in the implementations provided in \citealp{MLExtreme})
are 
\[
\{ \text{RET}, \text{VA} \}, \quad \{ \text{AC}, \text{VA}, \text{BC} \}, \quad \text{and} \quad \{ \text{LZ}, \text{BC}, \text{VK} \}.
\]
We note that the results are robust to the choice of the tuning parameter: any positive value smaller than 0.11 yields the same extremal directions. For the DAMEX algorithm with a tuning parameter of $0.3$, the detected extremal directions are
\[
\{ \text{RET}, \text{VA} \}, \quad \{ \text{AC}, \text{VA}, \text{BC} \}, \quad \text{and} \quad \{ \text{LZ}, \text{VA}, \text{BC}, \text{VK} \}.
\]
Again, the results are robust within a range: any tuning parameter between 0.24 and 0.41 produces the same set of extremal directions, while only two extremal directions \{RET, VA\} and \{AC,VA,BC,LZ,VK\} are found for parameters smaller than 0.24.
Overall, both alternative algorithms yield results consistent with our method: DAMEX produces identical extremal directions, and CLEF returns two matching clusters. The only divergence occurs in CLEF's third cluster, which excludes VA, precisely the feature with the weakest loading on our third factor.

\begin{figure}[htp!]
    \centering
    \begin{minipage}[t]{0.45\textwidth}
        \centering
        \includegraphics[width=\linewidth, height=0.35\textheight, keepaspectratio]{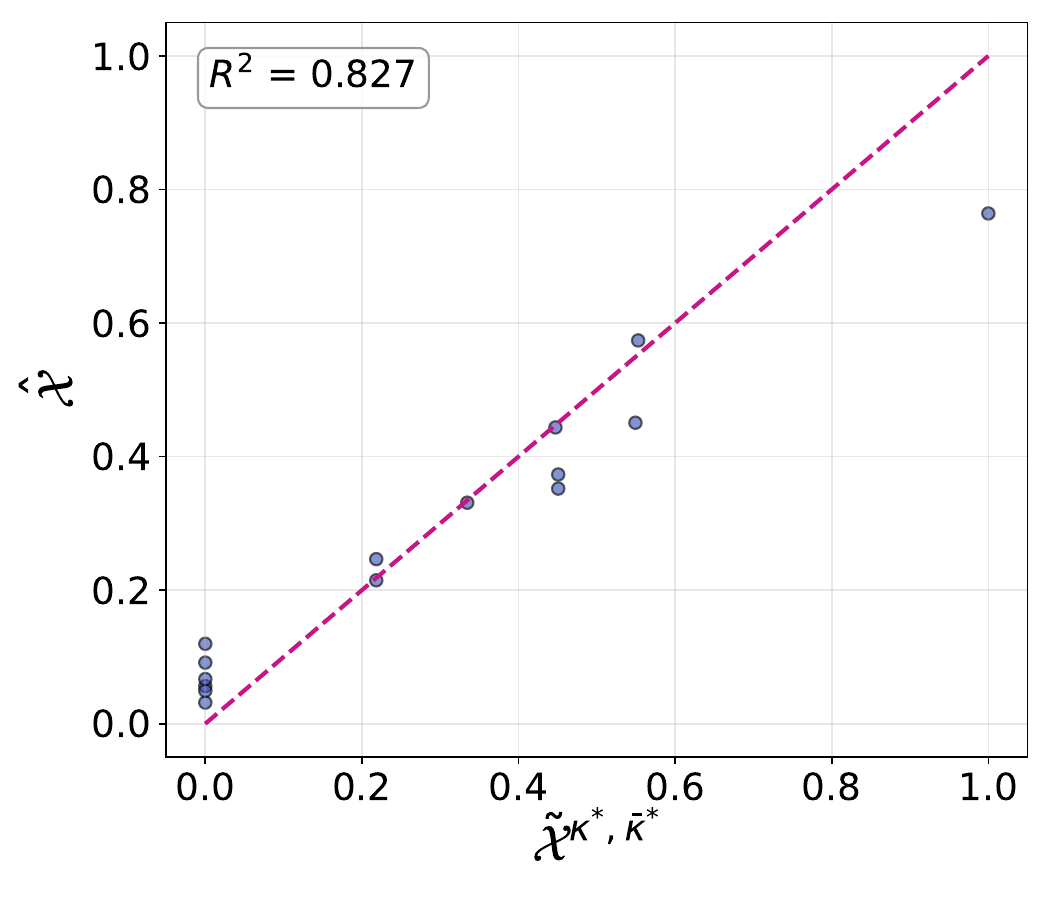}
    \end{minipage}
    \hfill
    \begin{minipage}[t]{0.45\textwidth}
        \centering
        \includegraphics[width=\linewidth, height=0.35\textheight, keepaspectratio]{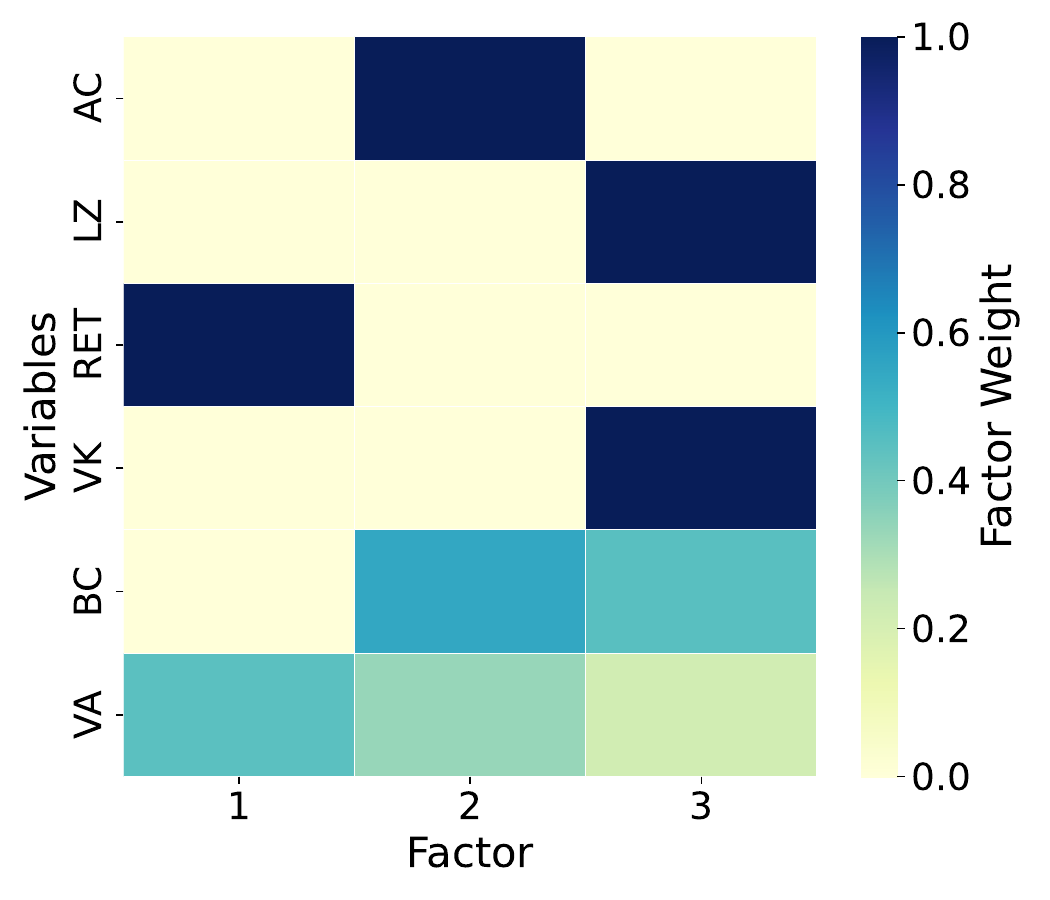}
    \end{minipage}    
    \caption{Results for the dietary data set with $d=6$. Left: empirical correlations from $\hat{\mathcal{X}}$ vs. fitted extremal correlations from $\tilde{\mathcal{X}}^{\hyperpurevar^*, \hypersparsity^*}$. Right: Estimated loading matrix $\triangleA{}^{\hyperpurevar^*, \hypersparsity^*}$.}
    \label{fig:complete_analysis_dietary}
\end{figure}

\subsection{Wind gusts in Schleswig-Holstein}
\label{sec:case-study-wind-speed}

Our second case study aims at analyzing tail dependence between hourly peak wind speeds (m/s) from $d = 22$ weather stations in Schleswig-Holstein (northern Germany) during winter seasons (December-January-February) from December 2013 through February 2018. Figure~\ref{fig:complete_analysis_spatial} (left) maps the station locations. The dataset is openly provided by the Deutscher Wetterdienst (DWD) at \url{https://opendata.dwd.de/climate_environment/CDC/observations_germany/climate/hourly/extreme_wind/historical/}. Restricting to complete records gives $n = 9447$ observations. As in the previous section, we set $k = \lfloor 0.25 (\ln(4dn^2))^{1/3} n^{2/3} \rfloor = 316$ for the number of threshold exceedances. Tuning parameters $\hyperpurevar$ and $\hypersparsity$ are chosen analogously to Section~\ref{sec:dietary}, resulting in $\hyperpurevar^* = \hypersparsity^* = 0.198$ and $\hat K=3$ estimated factors. Figure \ref{fig:complete_analysis_spatial} (right) compares empirical and model-implied extremal correlations. \emph{Pure–Pure} and \emph{Pure–Impure} pairs align well with the identity line, whereas \emph{Impure–Impure} pairs scatter below it, resulting in an overall poor fit.

The observed results can be explained as follows: the pure variable assumption requires that at least $K$ out of the $d$ variables are pairwise asymptotically independent (in particular, out of the $d(d-1)/2$ tail correlations, at least $K(K-1)/2$ must be zero). This restricts the size of $K$ in practice, with $\hat K=3$ for the wind gust data. However, a small $K$ often leads to many impure variables, all loaded by the same small set of factors. This typically results in large tail correlations among many impure variable pairs, explaining the numerous impure-impure points below the diagonal in Figure~\ref{fig:complete_analysis_spatial}.

This  suggests that requiring asymptotic independence among pure variables is too restrictive for many applications. Therefore, we outline a two-stage factor model in the Conclusion (Section~\ref{sec:conclusion}) that may allow to address this issue in future work.

\begin{figure}[thp!]
    \centering
    \begin{minipage}[t]{0.45\textwidth}
        \centering
        \includegraphics[width=\linewidth, height=0.35\textheight, keepaspectratio]{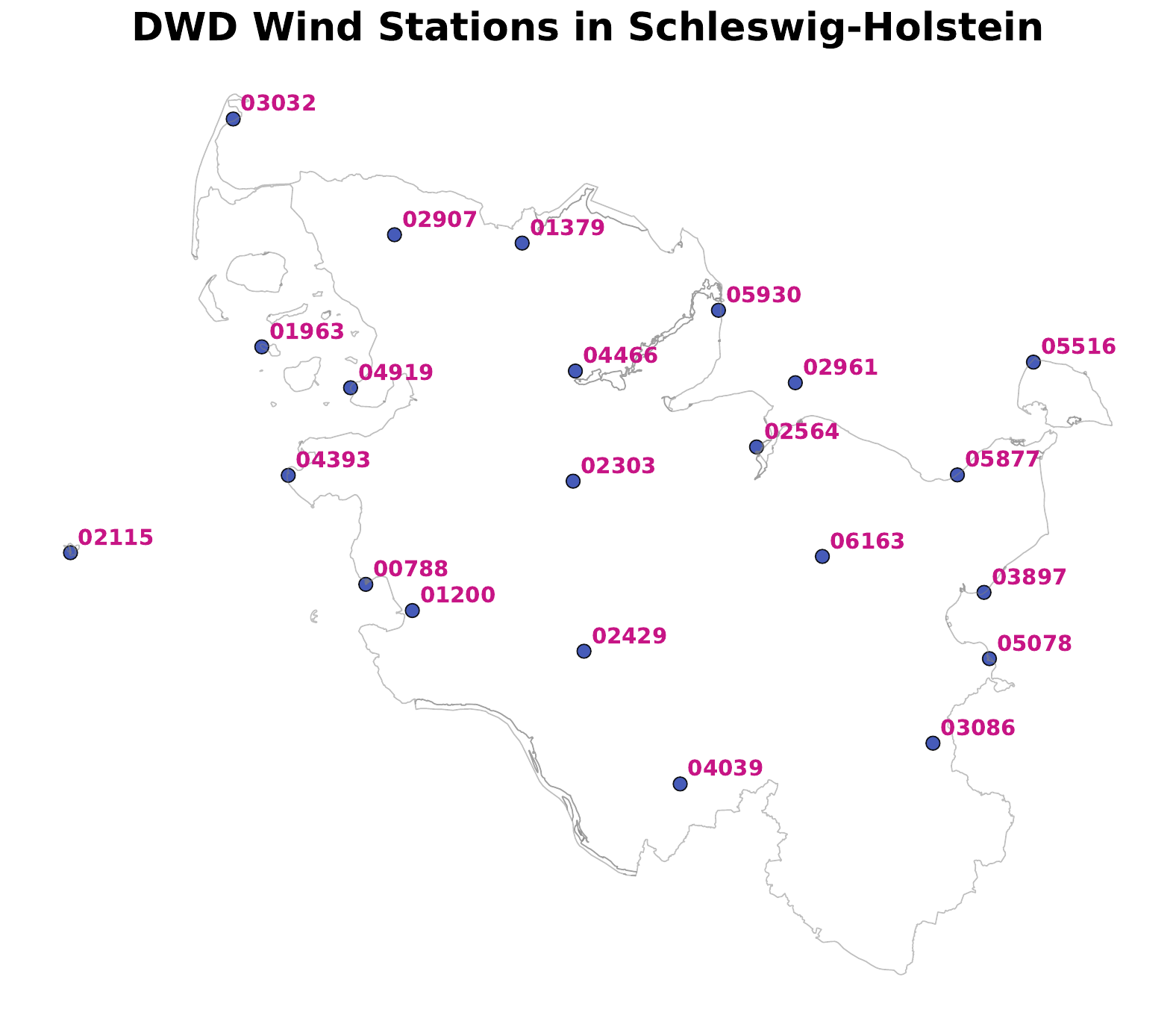}
    \end{minipage}
    \hfill
    \begin{minipage}[t]{0.45\textwidth}
        \centering
        \includegraphics[width=\linewidth, height=0.35\textheight, keepaspectratio]{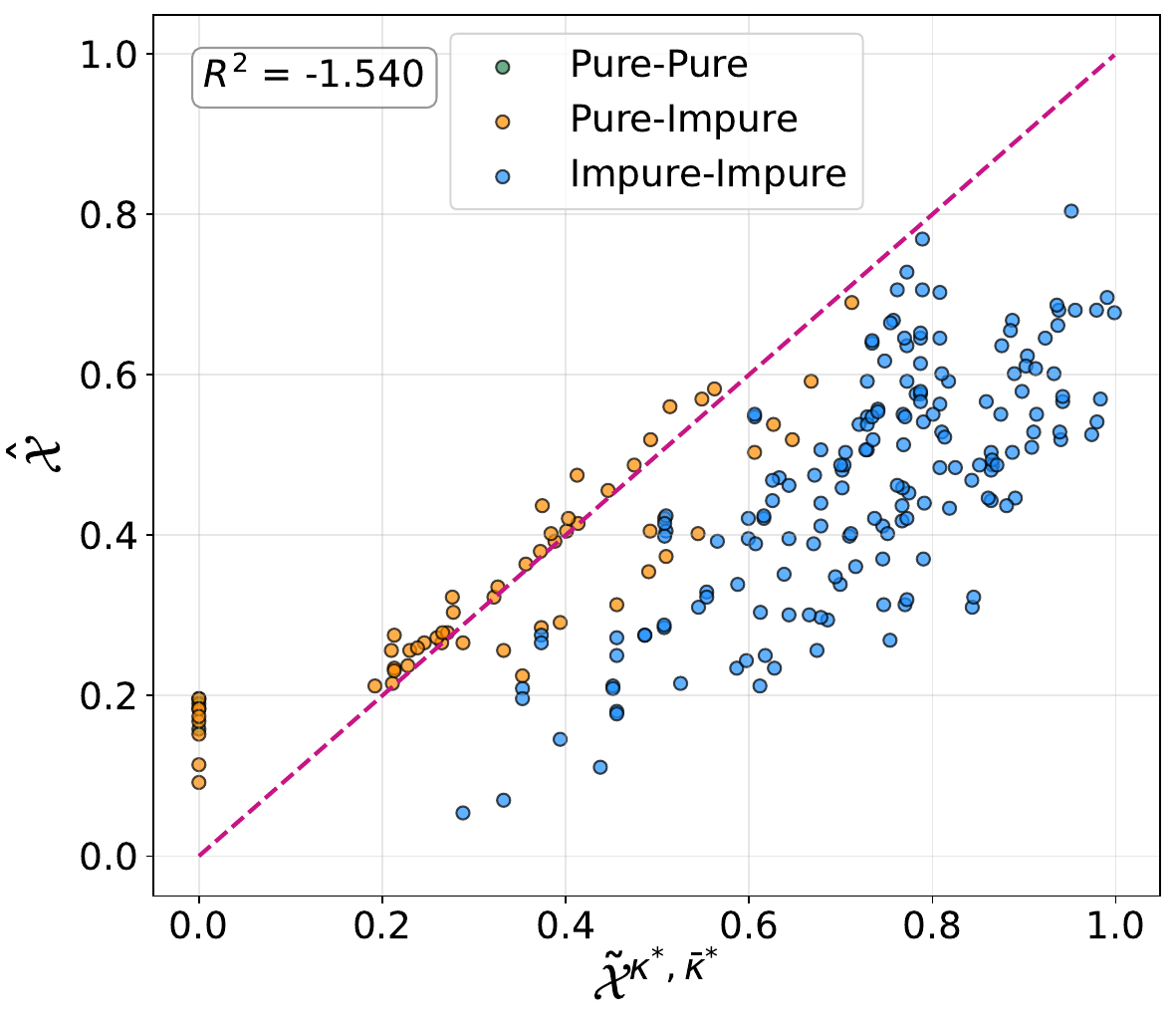}
    \end{minipage}    
    \caption{Results for the wind speed data set. Left: map of $d=22$ weather stations in Schleswig-Holstein. Right: empirical correlations from $\hat{\mathcal{X}}$ vs. fitted extremal correlations from $\tilde{\mathcal{X}}^{\hyperpurevar^*, \hypersparsity^*}$. }
    \label{fig:complete_analysis_spatial}
\end{figure}

\section{Conclusion}
\label{sec:conclusion}

A novel submodel of the classical heavy-tailed linear factor model in extremes has been introduced, leveraging a modified pure variable assumption introduced in \cite{Bin20}. Robust estimation methods for the unknown model parameters, including the number of latent factors $K$, have been developed. The methods have been equipped with finite sample statistical guarantees, accommodating cases where the dimension $d$ exceeds the sample size $n$. To the best of our knowledge, results of that kind are still scarce in the literature on extreme value analysis.

An important goal for future work is to extend the model and methods to better suit real-world data applications. The second case study in Section~\ref{sec:case_studies} illustrates that the pure variable assumption can be too restrictive, primarily due to the asymptotic independence between pure variables. To address this limitation, one may consider imposing a two-stage factor model of the form 
\[
\bm Y =   B_1  \bm{Z} + E,  \qquad \bm Z = B_2  \bm U,
\]
where the first-stage latent factors $\bm Z \in [0,\infty)^K$ are defined in terms of second-stage latent factors $\bm U \in [0,\infty)^K$. Asymptotic independence will only be imposed on the second-stage factor vector $\bm U$ (for instance as in (i)' of Remark~\ref{rem:fix-scale}), inducing asymptotic dependence in the first-stage factor vector $\bm Z$ unless $B_2 \in [0,\infty)^{K \times K}$ is diagonal. Note that our current model is a special case with $B_2$ as the identity matrix. 
Under suitable assumptions on $B_2$ (likely requiring it to be upper triangular) and the pure variable assumption $B_1 \in \Apure(K)$, we expect that $B_1$, $B_2$, and $A=B_1B_2$ are identifiable, subject to normalization constraints.

\appendix

\section{Proofs} \label{sec:proofs}

\subsection{Proofs for Section~\ref{sec:preliminaries}}

\begin{proof}[Proof of Lemma~\ref{lemma:stable-tail-dependence-spectral-measure}]
We start by proving `$\Longleftarrow$', so suppose that $\bm Y$ is regularly varying. We may hence choose a scaling sequence $(c_n)_n$ with associated non-degenerate exponent measure $\nu_{\bm Y}$ such that \eqref{eq:regular_variation} holds. Let $\alpha>0$ denote the tail index of $\bm Y$. Fix $j\in [d]$ and $y_j>0$. On the one hand we have,
by \eqref{eq:regular_variation} and homogeneity of $\bm \nu_Y$, 
\begin{align*} 
    \lim_{n\to\infty}  n \mathbb{P}\left\{ c_n^{-1} Y_j > y_j \right\}  
    =
    \nu_{\bm{Y}}\left( [0,\infty)^{j-1} \times (y_j, \infty) \times [0,\infty)^{d-j}\right)
    =
    y_j^{-\alpha} \kappa_j,
\end{align*}
where
$\kappa_j
:=
\nu_{\bm{Y}}\left( [0,\infty)^{j-1} \times (1, \infty) \times [0,\infty)^{d-j}\right) \ge 0.
$
On the other hand, by definition of $\bm Y$,
\[
n \mathbb{P}\left\{ c_n^{-1} Y_j > y_j \right\}
=
y_j^{-1}\frac{n}{c_n}.
\]
We conclude that $\lim_{n\to\infty} n/c_n =y_j^{1-\alpha} \kappa_j$ for all $y_j>0$ and $j\in[d]$. If $\alpha\ne 1$, this implies $\kappa_j=0$ for all $j\in[d]$, and hence $\nu_{\bm Y}([\bm 0, \bm 1]^c) \le \sum_{j\in[d]} \kappa_j =0$  by the union bound. This is a contradiction, as $\nu_{\bm Y}$ is non-degenerate by definition. Hence, $\alpha=1$, and then $\lim_{n\to\infty} n/c_n=\kappa_j$ for each $j \in [d]$; in particular, $\kappa=\kappa_j$ does not depend on $j$.

Now, for any $\bm x \in [0,\infty)^d$,
\begin{align} \label{eq:preasymptotic-stdf}
     n \mathbb{P}\left\{ F_1(X_1) > 1-\frac{x_1}{n} \textrm{ or } \dots \textrm{ or } F_d(X_d) > 1-\frac{x_d}{n} \right\} 
    &= \nonumber
    n \mathbb{P}\left\{ Y_1 > n/x_1 \textrm{ or } \dots \textrm{ or } Y_d > n/x_d \right\}
    \\&=
     n \Prob \{ c_n^{-1} \bm Y  \in (n/c_n)\cdot [\bm 0,  \bm 1/\bm x]^c\}.
\end{align}
A sandwich argument, invoking the convergence $\lim_{n \to \infty} n/c_n = \kappa$ and continuity of $t \mapsto \nu_{\bm Y}(tA)$ for any fixed $A$, implies that the left-hand side converges to
\[
\nu_{\bm Y}(\kappa [0, \bm 1/\bm x]^c)= \kappa^{-1} \nu_{\bm Y}([0,\bm 1/\bm x]^c).
\]
Hence, $L(\bm x)$ exists for all $\bm x \in [0,\infty)^d$.

We next prove `$\Longrightarrow$', so suppose that $L$ exists. Then, in view of \eqref{eq:preasymptotic-stdf} applied with $c_n=n$, we have, for any $\bm x \in \Eb_0$,
\[
n \Prob \{ c_n^{-1} \bm Y  \in  [\bm 0,  \bm x]^c\} = n \mathbb{P}\left\{ F_1(X_1) > 1-\frac{x_1^{-1}}{n} \textrm{ or } \dots \textrm{ or } F_d(X_d) > 1-\frac{x_d^{-1}}{n} \right\}, 
\]
which converges to $L(\bm 1/\bm x)$ for $\bm x \in (0, \infty)^d$, and to infinity if some coordinates of $\bm x$ are zero. 
Theorem 4.2 in \cite{hult2006regular} implies that $\bm Y$ is regularly varying with scaling sequence $c_n=n$, associated exponent measure defined by $\nu_{\bm Y}([\bm 0, \bm x]^c)=L(\bm 1/\bm x)$, and with tail index $1$.

The arguments in the previous paragraph also show that (i) and (ii) are met. It remains to prove (iii). Let $\nu_{\bm Y}$ denote the exponent measure from (ii). We then have, by the definition of the spectral measure and by \eqref{eq:relation-exponent-spectral-measure},
\begin{align*}
        L(\bm x) = \nu_{\bm Y}([0, \bm 1/ \bm x]^c) 
        &=
        \nu_{\bm Y} \Big( \Big\{ \bm y: \exists j \in [d] \text{ s.t. } y_j > x_j^{-1}  \Big\} \Big)
        \\&=
        \varsigma\cdot (\nu_{\alpha} \otimes \Phi_{\bm Y}) \left(\left\{ (r, \bm \lambda): \exists j \in [d] \text{ s.t. } r \lambda_j > x_j^{-1} \right\} \right)
        \\&=
        \varsigma
         \int_{\mathbb S_{+}^{d-1}} \max_{j \in [d]} \big( \lambda_j x_j \big)\, \diff  
        \Phi_{\bm Y}(\bm \lambda), \quad \bm x \in [0, \infty)^d,
    \end{align*}
as asserted. The claim for the $\| \cdot\|_\infty$-norm follows from  (ii), since $\varsigma=\nu_{\bm Y}(\{\bm y: \|\bm y\|_\infty > 1\}) = \nu_{\bm Y}([\bm 0, \bm 1]^c)=L(\bm 1)$. The claim for the $\|\cdot\|_1$-norm follows from the fact that $1=\int_{\mathbb S_{+}^{d-1}} \| \bm \lambda\|_ 1\, \diff  
        \Phi_{\bm Y}(\bm \lambda) = \sum_{j=1}^d \int_{\mathbb S_{+}^{d-1}} \lambda_j\, \diff  
        \Phi_{\bm Y}(\bm \lambda) = d \varsigma^{-1}$, where we have used that $\int_{\mathbb S_{+}^{d-1}} \lambda_j\, \diff  
        \Phi_{\bm Y}(\bm \lambda) = \varsigma^{-1}L(\bm e_j)=\varsigma^{-1}$ by the previous display.

        Finally, regarding uniqueness of $\Phi_{\bm Y}$, note that $\nu_{\bm Y}$ defined in (ii) is uniquely determined by $L$ as a consequence of Lemma B.1.32 in \cite{kulik2020heavy}.  Uniqueness of $\Phi_{\bm Y}$ then follows from the fact $\Phi_{\bm Y}$ is determined by $\nu_{\bm Y}$ by definition in \eqref{eq:def-spectral-measure-new}.
\end{proof}

\begin{proof}[Proof of Proposition~\ref{prop:spectral_measure_max_lin}]
Fix $(c_n)_n$ as in (i) of Model~\ref{cond:factormodel}, and note that $n \Prob(c_n^{-1} \bm Z \in \cdot) \to \nu_{\bm Z}$ in $\Mb(\Eb_0)$ with $\nu_{\bm Z}$ as in \eqref{eq:Lambda_z}. Define functions $f,g:[0,\infty)^K \setminus\{ \bm{0}\} \to \Eb_0$ by $f(\bm z):=A \bm z$ and $g(\bm z) = A \times_{\max} \bm{z}$.
The continuity and homogeneity of $f$ and $g$ imply that a set $B$ is separated from zero if and only if $f^{-1}(B)$, $g^{-1}(B)$ are separated from zero. Thus, Theorem B.1.21 in \cite{kulik2020heavy} implies that the vectors $f(\bm{Z})$ and $g(\bm{Z})$ are regularly varying  with respective exponent measures associated with $(c_n)_n$ given by $\nu_{A\bm{Z}} = \nu_{\bm{Z}} \circ f^{-1}$ and $\nu_{A \times_{\max} \bm{Z}} = \nu_{\bm{Z}} \circ g^{-1}$. Moreover, the tail index can easily seen to be $\alpha$ for both $f(\bm{Z})$ and $g(\bm{Z})$. 

Using that $f$ is a bounded linear operator, there exists a constant $L > 0$ such that $\|f(\bm{z})\| \le L \|\bm{z}\|$ for all $\bm{z} \in \mathbb{R}^K$, whence the tail of $f(\bm{Z})$ satisfies
\[
    \frac{\mathbb{P}\left\{ \| f( \bm{Z}) \| > x \right\}}{\mathbb{P}\left\{ \|\bm{E}\| > x \right\}} \le \frac{\mathbb{P}\left\{ \|\bm{Z} \| > x/L \right\}}{\mathbb{P}\left\{ \|\bm{E}\| > x \right\}} = \frac{\mathbb{P}\left\{ \|\bm{Z} \| > x/L \right\}}{\mathbb{P}\left\{ \|\bm{Z}\| > x \right\}} \frac{\mathbb{P}\left\{ \|\bm{Z} \| > x \right\}}{\mathbb{P}\left\{ \|\bm{E}\| > x \right\}} = O(1) o(1) = o(1),
\]
as $x \rightarrow \infty$. Thus, we can apply Lemma \ref{lemma:vanishing_noise} in Section \ref{sec:additional} to conclude that $\bm{Y}=_df(\bm Z) + \bm E$ is regularly varying with tail index $\alpha$ and exponent measure $\nu_{\bm{Y}} = \nu_{A \bm{Z}}$ associated with $(c_n)_n$. Likewise, since $\|g(\bm{z})\|_\infty \le (\max_{j \in [d], a\in [K]} A_{ja}) \|\bm{z}\|_\infty$ for any $\bm{z} \in \mathbb{R}^K$, we may replace $f$ by $g$ in the last display and then obtain, also by Lemma \ref{lemma:vanishing_noise}, that $\bm{Y}=_d g(\bm Z) \vee \bm E$ is regularly varying with tail index $\alpha$ and exponent measure $\nu_{\bm{Y}} = \nu_{A \times_{\max} \bm{Z}}$ associated with $(c_n)_n$.

We now compute the spectral measures of $A \bm{Z}$ and $A \times_{\max} \bm{Z}$. Since the involved computations are the same for both cases (mainly due to asymptotic independence), we only treat the first case. By the change-of-variable formula and the representation of $\nu_{\bm Z}$ in \eqref{eq:Lambda_z}, we have, 
\begin{align*}
\nu_{\bm Y}(B)  
=
\nu_{\bm Z} \circ f^{-1}(B)
=
\int_{[0,\infty)^d \setminus\{ \bm{0}\}} \bm 1_{B}(\bm{x}) \,\diff (\nu_{\bm Z} \circ f^{-1})(\bm{x})
&= \nonumber
\int_{[0,\infty)^K \setminus\{ \bm{0}\}} \bm 1_B \circ f(\bm{z}) \,\diff \nu_{\bm Z}(\bm{z})
\\&=  \nonumber
\sum_{a \in [K]} \int_{(0,\infty)} \bm 1_B(A_{1a}z_a,\dots,A_{da}z_a) \nu_\alpha(\diff z_a)
\\&=
\sum_{a \in [K]} \int_{(0,\infty)} \bm 1_B(z_a A_{\cdot a}) \nu_\alpha(\diff z_a)
\end{align*}
for every Borel set $B$ in $\mathbb{E}_0$ that is bounded away from zero. As a consequence, for any Borel set $C$ in $\mathbb{S}_+^{d-1}$, we have
\begin{align*}
\nu_{\bm{Y}} \circ T^{-1} ((1,\infty) \times C) 
&= 
\sum_{a \in [K]} \int_{(0,\infty)} \bm 1_{T^{-1}((1,\infty)\times C)}(z_a A_{\cdot a}) \nu_\alpha(\diff z_a)
\\&= 
\sum_{a \in [K]} \int_{(0,\infty)} \bm 1_{(1,\infty) \times C} \big(\| A_{\cdot a} \| z_a, A_{\cdot a} / \| A_{\cdot a}\| \big) \nu_\alpha(\diff z_a) 
\\ &= 
\sum_{a \in [K]} \| A_{\cdot a} \|^\alpha \delta_{A_{\cdot a}/ \| A_{\cdot a}\|}(C),
\end{align*}
which implies $\varsigma \equiv \nu_{\bm Y}(\{ \bm x \in \Eb_0: \| \bm x\| >1\}) =\sum_{a \in [K]} \| A_{\cdot a} \|^\alpha$ and hence \eqref{eq:Phi_A} by the definition of $\Phi_{\bm X}$ in \eqref{eq:def-spectral-measure-new}.

The assertion about $L$ in \eqref{eq:stable-tail-dp-A} now follows from Lemma~\ref{lemma:stable-tail-dependence-spectral-measure-old} and a straightforward calculation, 
after observing that $t_j$ from \eqref{eq:t_j-definition} can be written as $t_j=\kappa_j/\varsigma$ by \eqref{eq:t_j-representation}, with
\[
\kappa_j 
\equiv
\nu_{\bm{X}}\left( [0,\infty)^{j-1} \times (1, \infty) \times [0,\infty)^{d-j}\right)
=
\sum_{a \in [K]} A_{ja}^\alpha>0.
\]
    The proof is finished.
\end{proof}

\begin{proof}[Proof of Lemma~\ref{lem:tail-probabilities-maxlinear-stdf}]
Let $\bm Y$ be defined as in Lemma~\ref{lemma:stable-tail-dependence-spectral-measure}. Then, by elementary transformations and \eqref{eq:def-spectral-measure-new},
\begin{align*}
R^\cup_{\mathcal J}(\bm x)
&=
\lim_{n \to \infty}  n
\mathbb P \Big (\exists\, J \in \mathcal J\,  \forall j \in J: Y_j > \frac{n}{x_j} \Big)
\\&=
\nu_{\bm Y} \Big( \big\{ \bm y \in [0, \infty)^d \mid \exists\, J \in \mathcal J\,  \forall j \in J : y_j > x_j^{-1} \big\} \Big) 
\\&=
\varsigma (\nu_\alpha \otimes \Phi_{\bm Y} ) \Big( \big\{ (r, \bm \lambda) \in [0, \infty) \times \mathbb S_+^{d-1} \mid \exists\, J \in \mathcal J\,  \forall j \in J : r\lambda_j > x_j^{-1}  \big\} \Big) 
\\&=
\varsigma (\nu_\alpha \otimes \Phi_{\bm Y} ) \Big( \big\{ (r, \bm \lambda) \in [0, \infty) \times \mathbb S_+^{d-1} \mid  r >  \min_{J \in \mathcal J} \max_{j \in J} x_j^{-1} \lambda_j^{-1} \big\} \Big) 
\\&=
\varsigma \int_{\mathbb S_{+}^{d-1}} \int_{\min_{J \in \mathcal J} \max_{j \in J} x_j^{-1} \lambda_j^{-1}}^\infty \, \diff \nu_{\alpha}(r) \, \diff \Phi_{\bm Y}(\bm \lambda)
\\&=
\varsigma \int_{\mathbb S_{+}^{d-1}} \max_{J \in \mathcal J} \min_{j \in J} (x_j \lambda_j)\, \diff \Phi_{\bm Y}(\bm \lambda).
\end{align*}
Next, observe that Lemma~\ref{lemma:stable-tail-dependence-spectral-measure} and Proposition~\ref{prop:spectral_measure_max_lin} imply that the spectral measure of $\bm Y$ is uniquely determined by $L$ and hence by $(K, \bar A)$; and more specifically, 
\[
\Phi_{\bm Y}(\cdot) = \varsigma^{-1}  \sum_{a \in [K]} \| \bar A_{\cdot a} \| \delta_{\bar A_{\cdot a}/\| \bar A_{\cdot a} \|} (\cdot),
\]
where $\varsigma = \sum_{a \in [K]} \| \bar A_{\cdot a}\|$. As a consequence, by the previous two displays,
\begin{align*}
R^\cup_{\mathcal J}(\bm x)
&=
\sum_{a \in [K]} \| \bar A_{\cdot a} \| \max_{J \in \mathcal J} \min_{j \in J} (x_j \bar A_{ja}/\| \bar A_{\cdot a}\| )
=
\sum_{a\in[K]} \bigvee_{J \in \mathcal J} \bigwedge_{j \in J} \bar A_{ja}x_j
\end{align*}
as asserted.
The claim for $L^\cap_{\mathcal J}$ follows along similar lines.

Finally, regarding the extremal directions, we have
$
\Phi_{\bm Y}(\Sb_D) = \varsigma^{-1} \sum_{a \in [K]} \| \bar A_{\cdot a} \| \bm 1(\bar A_{\cdot a} \in \mathbb S_D),
$
which is positive if and only if, for some $a \in [K]$, $\supp(\bar A_{\cdot a}) = D$.
\end{proof}

\begin{proof}[Proof of Proposition~\ref{prop:injective-new}]
Let $\Xi \in \Phi_L(\Theta_L)$; i.e., there exists $(K,\bar A) \in \Theta_L$ such that  $\Phi_L(K,\bar A) = \Xi$. We need to show that $K$ is unique, and that $\bar A$ is unique up to column permutations. 
Let $I(\bar A)$ denote the pure variable set of $\bar A$, and let $I_a(\bar A)$ be defined as in \eqref{eq:Ia}, for $a\in[K]$.
Proposition \ref{prop:pur_var_ide-new2} implies that $\Xi$ uniquely determines $K$, $I(\bar A)$ and the partition $\{I_1(\bar A), \dots, I_K(\bar A)\}$ of $I(\bar A)$, up to permutation of labels. Subsequently, we fix one such permutation of labels.
We then obtain that the rows of $\bar A$ corresponding to $I(\bar A) \subset [d]$ are uniquely determined. Indeed, for any $\ell \in I(\bar A)$, there exists a unique $a=a_\ell$ such that $\ell \in I_a(\bar A)$. 
Hence, observing that $I_a(\bar A)= \{ j \in [d]: \bar A_{ja}=1, \bar A_{jb}=0 \forall b \ne a\}$, we obtain that $\bar A_{\ell \cdot}$ must be the unit vector $e_a \in \R^K$, with $1$ at the $a$th position.
Next, consider the rows of $\bar A$ corresponding to $[d] \setminus I(\bar A)$, so fix $j\in [d] \setminus I(\bar A)$. Then, for any $a\in[K]$, we have 
$
\Xi_{j,\ell} = \sum_{b \in [K]} \bar A_{jb} \wedge \bar A_{\ell b} = \bar A_{ja}
$
for all $\ell \in I_a(\bar A)$ by definition of $\mathcal X$ in \eqref{eq:def-phi-l}. Overall, we found that $\bar A$ is uniquely determined, up to the label permutation fixed earlier in the proof. Such a label permutation can be identified with a column permutation of $\bar A$.
\end{proof}

\begin{proof}[Proof of Proposition~\ref{prop:pur_var_ide-new2}]
(a) Fix $a\in [K]$. By the pure variable assumption, we may choose  an index $j=j_a \in [d]$ such that $\bar A_{ja}>0$ and $\bar A_{jb}=0$ for all $b\ne a$; since $\bar A$ has row sums one, we hence have  $\bar A_{ja}=1$ and $\bar A_{jb}=0$ for all $j\ne b$. We start by showing that the set $S=\{j_1, \dots, j_K\} \subseteq [d]$ is a clique in $G$. Indeed, since $\Xi=\Phi_L(K, \bar A)$ by assumption, for any distinct $j_a,j_{a'}\in S$, we have $\xi_{j_a,j_{a'}} = \chi(j_a, j_{a'}) = \sum_{b=1}^K \bar A_{j_ab} \wedge \bar A_{j_{a'} b} = \bar A_{j_aa}  \wedge \bar A_{j_{a'} a} + \bar A_{j_aa'}  \wedge \bar A_{j_{a'} a'}= 2 (1 \wedge 0)=0$ by Proposition~\ref{prop:spectral_measure_max_lin}. Hence, $S$ is a clique of cardinality $K$. In particular, $L \ge K$.

It remains to show that $L \le K$. For that purpose, let $C'$ be an arbitrary clique.
We need to show that $K':= |C'| \le K$.
For $j \in [d]$, let $S(j) = \{ a \in [K] : \bar A_{ja} > 0 \}$ denote the support of the $j$th row of $\bar A$. 
Now, again since $\Xi=\Phi_L(K, \bar A)$ by assumption and by Proposition~\ref{prop:spectral_measure_max_lin}, we have, for all distinct $j, \ell \in C'$,  
$
    0 = \xi_{j,\ell} = \chi(j, \ell) 
    = 
    \sum_{a \in [K]} \bar{A}_{ja} \wedge \bar{A}_{\ell a}.
$
As a consequence, ``for all $a \in [K]$: $\bar A_{ja}=0$ or $\bar A_{\ell a}=0$'', which in turn is equivalent to $S(j) \cap S(\ell) = \emptyset$. Hence, the sets $S(j)$ with $j\in C'$ are pairwise disjoint. Therefore, since $|S(j)| \ge 1$ by the non-zero row sum condition,
\begin{align} \label{eq:k-k}
K'=|C'| = \sum_{j \in C'} 1 \le \sum_{j\in C'} |S(j)| = \Big|\bigcup_{j\in C'} S(j)\Big| \le K,
\end{align}
which is the desired inequality.

(b) In view of (a), the clique $C$ has size $K$. Hence, by \eqref{eq:k-k} with $C'$ replaced by $C$, we have $K=|C| = \sum_{j \in C} 1 \le \sum_{j\in C} |S(j)| = |\bigcup_{j\in C} S(j)| \le K$, which is only possible if $|S(j)|=1$ for all $j\in C$. But $|S(j)|=1$ just means that $j$ is pure variable of $\bar A$.

(c) Fix $a \in [K]$ with $|I_a(\bar A)|\ge 2$ and $j\in I_a(\bar A)$; hence, $\bar A_{ja}=1$ and $\bar A_{jb}=0$ for all $b\ne a$. As a consequence, for any $\ell\in [K]$ with $\ell\ne j$, we have $\xi_{j,\ell} = \chi(j,\ell) = \sum_{b =1}^K \bar A_{jb} \wedge \bar A_{\ell b} = 1 \wedge \bar A_{\ell a} = \bar A_{\ell a}$. If $\ell \in I_a(\bar A)$, this expression is equal to 1. Conversely,  if $1=\xi_{j,\ell} = \chi(j,\ell) = \bar A_{\ell a}$, then $\bar A_{\ell a}>0$ and $\bar A_{\ell b}=0$ for all $b \ne a$, as $\bar A$ has row sums 1.

(d) By (a) and (b), the clique $C$ (which is uniquely determined by $\Xi$ by definition) has size $K$ and only consists of pure variables. Further, note that we have $|I_a(\bar A) \cap C| =1$ for all $a \in [K]$, because otherwise we would have $|I_a(\bar A) \cap C| \ge 2$ for some $a$, which is impossible by (c).
Now, fix $a \in [K]$, and let $j_a$ denote the unique element of $I_a(\bar A) \cap C$. Then, by part $c$, $I_a(\bar A) = \{j_a\} \cup \{\ell \in  [d]: \xi_{j_a,\ell}=1\}$, which implies the assertion.
\end{proof}

\subsection{Proofs for Section~\ref{sec:statistics-stdf-pure-variable}}

\begin{proof}[Proof of Theorem~\ref{thm:consistency_K_pure2}] 
    Let us consider the event
    \begin{equation}
    \label{eq:E-hp-event}
        \mathcal{E} := \mathcal{E}(\kappa_0) := \left\{ \underset{1 \leq j < \ell \leq d}{\max} \, |\hat{\chi}_{n, \knon}(j,\ell) - \chi(j,\ell)| \le \kappa_0 \right\}.
    \end{equation}
    Invoking the union bound 
    we know from Proposition \ref{prop:concentration-tail-correlation} that, with probability at least $1-\delta$, 
    \begin{multline*}
        \underset{1 \leq j < \ell \leq d}{\max} \, |\hat{\chi}_{n, \knon}(j,\ell) - \chi_{n, \knon}(j,\ell)| \le \frac{\sqrt{2}+ \sqrt{8 \ln(d(d-1)/\delta)} + \sqrt{8 \ln(4d(d-1)/\delta)}}{\sqrt{\knon}} \\ + \frac{6+2\ln(d(d-1)/\delta) + 4\ln(4d(d-1)/\delta)}{3\knon}.
    \end{multline*}
    Using the triangle inequality and the fact that $\ln(ad(d-1)) \leq 2\ln(ad)$ for any $a \geq 1$ we obtain that, with probability at least $1-\delta$,
    \begin{equation*}
        \underset{1 \leq j < \ell \leq d}{\max} \, |\hat{\chi}_{n, \knon}(j,\ell) - \chi(j,\ell)| \le \kappa_0,
    \end{equation*}
    or equivalently stated,
    $\mathbb{P}\left\{ \mathcal{E} \right\} \ge 1-\delta.$
    It is hence sufficient to show that (i) and (ii) from Theorem~\ref{thm:consistency_K_pure2} hold on the event $\mathcal E$.

Recall that $\hat{C}=\{\hat j_1, \dots, \hat j_{\hat K}\}$ is a maximum clique of size $\hat{K} = |\hat{C}|$ in the graph $\hat{G} = (V,\hat{E})$ with $V = [d]$ and edge set $\hat{E}=\{ (j,\ell) \, : \, \hat{\chi}_{n, \knon}(j,\ell) \le \hyperpurevar\}$. We start by pointing out that the following five claims are sufficient to prove \eqref{item:thm_consistency_K_pure_i} and \eqref{item:thm_consistency_K_pure_ii}:
    \begin{compactenum}[(a)]
\item \label{item:proof_thm_consistency_K_pure_1}
$\hat{K} = K$;
\item \label{item:proof_thm_consistency_K_pure_2}
$\hat{C} \subset I$;

\item \label{item:proof_thm_consistency_K_pure_3}
$\hat I \subset I$;

\item \label{item:proof_thm_consistency_K_pure_4}
There exists a permutation $\pi$ on $[K]$ such that $\hat j_{a} \in I_{\pi(a)}$ for all $a\in[K]$.

\item \label{item:proof_thm_consistency_K_pure_5}
With the permutation $\pi$ from \eqref{item:proof_thm_consistency_K_pure_4}, we have $I_{\pi(a)} \subset \hat I_{a}$ for all $a\in[K]$.

\end{compactenum}
Indeed, by \eqref{item:proof_thm_consistency_K_pure_3} and \eqref{item:proof_thm_consistency_K_pure_5}, we then have
\begin{align*}
    \hat I  \subset I = \bigcup_{a\in[K]} I_a \subset \bigcup_{a\in[K]} \hat I_{\pi^{-1}(a)} = \hat I,
\end{align*}
which yields $I = \hat I$ and hence \eqref{item:thm_consistency_K_pure_i}. Moreover, since both $(I_a)_a$ and $(\hat I_{\pi^{-1}(a)})_a$ are disjoint with $I_a \subset \hat I_{\pi^{-1}(a)}$ by \eqref{item:proof_thm_consistency_K_pure_5}, we must have equality $I_a = \hat I_{\pi^{-1}(a)}$ and hence \eqref{item:thm_consistency_K_pure_ii}.

To prove \eqref{item:proof_thm_consistency_K_pure_1}, let us first prove that $\hat{K} \ge K$, for which it is sufficient to find a clique in $\hat G$ of size $K$. By the pure variable assumption in Condition~\ref{cond:purevar}, we may choose, for any $a\in[K]$, an index $j = j_a \in [d]$ such that $\bar{A}_{ja} = 1$ and $\bar{A}_{jb} = 0$ for any $b \neq a$. Then ${S} = \{j_1,\dots,j_K\}$ is a clique in $\hat{G}$: indeed, for any $j, \ell \in {S}$ with $j \neq \ell$, we have $\chi(j,\ell) = 0$ by \eqref{eq:chiA}, whence, on the event $\mathcal{E}$,
$
    \hat{\chi}_{n, \knon}(j,\ell) \leq \chi(j,\ell) + \kappa_0 = \kappa_0 \leq \hyperpurevar
$
by the choice of $\hyperpurevar$. Hence, $S$ is a clique in $\hat G$ by the definition of $\hat G$.

We now show that $\hat{K} \leq K$. For that purpose, let $\hat{C}'$ be an arbitrary clique in $\hat{G}$.
We need to show that $\hat{K}' := |\hat{C}'| \leq K$. For $j\in [d]$, let $N(j) = \{a \in [K]: \bar{A}_{ja} \ge \eta\}$ denote the $\eta$-support of row $j$ of $\bar A$, i.e., the components in the $j$th row of $\bar{A}$ that are at least $\eta$. We start by showing that the sets $N(j)$ for $j \in \hat{C}'$ are disjoint. Indeed, suppose that there exist $j, \ell \in \hat{C}'$ with $N(j) \cap N(\ell) \ne \emptyset$. Then, by \eqref{eq:chiA}, 
\begin{equation*}
    \chi(j,\ell) = \sum_{a \in [K]} \bar{A}_{ja} \wedge \bar{A}_{\ell a} \geq \sum_{a \in N(j) \cap N(\ell)} \bar{A}_{ja} \wedge \bar{A}_{\ell a} \ge \eta,
\end{equation*}
which in turn implies that, on the event $\mathcal{E}$, $\hat{\chi}_{n, \knon}(j,\ell) \geq \chi(j,\ell) - \kappa_0 \ge \eta - \kappa_0 > \hyperpurevar$ by the assumption $\hyperpurevar < \eta/2 = \eta - \eta/2 < \eta- \hyperpurevar \le \eta -\kappa_0$.
This is a contradiction, as $j,\ell\in \hat C'$ implies that $\hat{\chi}_{n, \knon}(j,\ell) \leq \hyperpurevar$ by the clique property.
Next, observe that $|N(j)| \geq 1$ for all $j\in[d]$: indeed, if $j$ is pure, then $j\in I_a$ for some $a\in [K]$ and hence $\bar A_{ja}=1 \ge \eta$, and if $j$ is impure then $|N(j)| \geq 2$ by definition of $\eta$. Overall,
\begin{equation*}
    \hat{K}' =|\hat{C}'| = \sum_{j \in \hat{C}'}1 \leq \sum_{j \in \hat{C}'} |N(j)| = \Big|\bigcup_{j \in \hat{C}'} N(j)\Big|  \leq \big| [K] \big| = K.
\end{equation*}

To prove \eqref{item:proof_thm_consistency_K_pure_2} observe that, since $|\hat C|=K$, the previous display applied with $\hat C' = \hat C$ implies that $K=\sum_{j \in \hat C} |N(j)|$. Since each summand in the sum on right-hand side is at least 1, this is only possible if $|N(j)|=1$ for all $j \in \hat C$. Hence, since $|N(\ell)| \ge 2$ for $\ell\in J$, we must have $j\in I$ for all $j \in \hat C$.

To prove \eqref{item:proof_thm_consistency_K_pure_3}, or, equivalently, $J \subset \hat I^c$,
note that is sufficient to show that $\hat{\chi}_{n, \knon}(j,\ell) <1- \hyperpurevar$ for all $j \in I$  and  $\ell \in J$. Indeed, for any $\ell \in J$, we then have $\hat{\chi}_{n, \knon}(j,\ell) <1- \hyperpurevar$ for all $j \in \hat C$ by  \eqref{item:proof_thm_consistency_K_pure_2}, whence, by Step 5 in Algorithm~\ref{alg:purevar}, $\ell \notin \hat I$.
Hence, fix $j \in I$  and  $\ell \in J$. Choose $a\in[K]$ such that $j \in I_a$, which implies that $\bar A_{ja}=1$ and $\bar A_{jb}=0$ for all $b \ne a$ by Condition~\ref{cond:purevar}. Moreover, note that $\ell \in J$ implies that $\bar A_{\ell b} \le 1-\eta$ for all $b \in [K]$ by definition of $\eta$ in \eqref{eq:signal-strength-eta}. As a consequence, we obtain that 
$\chi(j,\ell) = \bar{A}_{\ell a} \le 1-\eta$ by \eqref{eq:chiA}. Hence, on the event $\mathcal E$, 
\begin{equation*}
    \hat{\chi}_{n, \knon}(j,\ell) \le \chi(j,\ell) +  \kappa_0 
    \le 1-\eta + \kappa_0 < 1-\hyperpurevar
\end{equation*}
by the assumption $\hyperpurevar < \eta-\kappa_0$.

Let us show \eqref{item:proof_thm_consistency_K_pure_4}. 
We start by showing that $|I_a \cap \hat{C}| = 1$ for all $a \in [K]$. Indeed, if this were not the case, there would exist some $a \in [K]$ such that $|I_a \cap \hat{C}| \geq 2$. This is impossible because, for $j, \ell \in I_a \cap \hat{C}$, the equality $\chi(j, \ell) = 1$ must hold by Proposition~\ref{prop:pur_var_ide-new2}(c). Hence, on the event $\mathcal{E}$ and observing that $\eta<1$ and $\hyperpurevar < \eta-\kappa_0$, we then have
\begin{equation*}
\hat{\chi}_{n, \knon}(j, \ell) \geq \chi(j,\ell) - \kappa_0 = 1 - \kappa_0 > \eta - \kappa_0 > \hyperpurevar,
\end{equation*}
which is a contradiction, as it is not possible for $j, \ell \in \hat{C}$.

Next, we have $\hat{K} = K$ by \eqref{item:proof_thm_consistency_K_pure_1}, which allows to write $\hat C=\{ \hat j_1, \dots, \hat j_K\}$. Moreover, $\hat C \subset I$ by \eqref{item:proof_thm_consistency_K_pure_2}, whence $\{ \hat j_1, \dots, \hat j_K\} = \hat C = \hat C \cap I = \bigcup_{b \in [K]} (\hat C \cap I_b)$. Hence, since the $K$ sets in the union on the right are disjoint and contain only one element as shown above, any $\hat j_a$ ($a \in [K]$) may be identified with exactly one $\hat C \cap I_b$ ($b \in [K]$); this defines the permutation $b=\pi(a)$ we are looking for.

It remains to prove \eqref{item:proof_thm_consistency_K_pure_5}. 
Fix $a\in[K]$ and note that $\hat j_a \in I_{\pi(a)}$ by \eqref{item:proof_thm_consistency_K_pure_4}.
If $|I_{\pi(a)}|=1$, we obtain that $I_{\pi(a)}=\{\hat j_a\} \subset \hat I_a$ . If $|I_{\pi(a)}| \ge 2$, 
let  $\ell \in I_{\pi(a)}$ be arbitrary with $\ell \ne \hat j_a$. 
Then, by Proposition~\ref{prop:pur_var_ide-new2}(c), we have $\chi(\hat j_a, \ell)=1$. Hence, on the event $\mathcal{E}$,
\[
\hat{\chi}_{n, \knon}(\hat{j_a},\ell) \ge \chi(\hat j_a, \ell) - \kappa_0 = 1- \kappa_0 \ge 1- \hyperpurevar
\]
by the assumption  $\hyperpurevar \ge \kappa_0$.
Therefore, by Step 5 in Algorithm~\ref{alg:purevar}, we have $\ell \in \hat I_a$. Since 
$\ell \in I_{\pi(a)} \setminus \{ \hat j_a\}$ was arbitrary and since $\hat j_a \in \hat I_a$ by definition, we have shown that  
$I_{\pi(a)} \subset \hat I_{a}$ as asserted.
\end{proof}

\begin{proof}[Proof of Theorem~\ref{thm:stat_guarantees_Abar}] 
Recall the event $\mathcal E = \mathcal E(\kappa_0)$ from \eqref{eq:E-hp-event} in the proof of Theorem~\ref{thm:consistency_K_pure2}. Since that event has probability at least $1-\delta$, it is sufficient to show that \eqref{item:thm_stat_guarantees_Abar_1} and \eqref{item:thm_stat_guarantees_Abar_3} hold on $\mathcal E$. Subsequently, we tacitly assume to work on this event. Recall that we then have $\hat{K} = K$, $\hat{I} = I$, and that there exists a permutation $\pi$ on the set $[K]$ such that $\hat{I}_a = I_{\pi(a)}$ for all $a \in [K]$; this follows from Theorem~\ref{thm:consistency_K_pure2}. In particular, we may define a matrix $\hat{P} \in S_K$ with entries $\hat{P}_{ba} = 1$ if $b = \pi(a)$ and $\hat{P}_{ba} = 0$ otherwise.

We start by proving \eqref{item:thm_stat_guarantees_Abar_1}, which will be an immediate consequence of the following two assertions:

\smallskip

\begin{compactenum}[(a)]
    \item \label{item:proof_thm_stat_guarantees_Abar_1}
    We have $\triangleA_{j\cdot} = (\bar A \hat P)_{j\cdot}$ for all $j \in I$.  As a consequence, 
     $
     \max_{j \in I} \| \triangleA_{j\cdot} - (\bar A \hat P)_{j\cdot} \|_2 = 0.
     $
    \item \label{item:proof_thm_stat_guarantees_Abar_3}
    We have
     $     
     \max_{j \in J} \| \triangleA_{j\cdot} - (\bar A \hat P)_{j\cdot} \|_2 \le  2\sqrt{s} \hyperpurevar.
     $
\end{compactenum}

\smallskip
For the proof of \eqref{item:proof_thm_stat_guarantees_Abar_1}, set $\tilde{A} = \bar A\hat{P}$.
We need to show that  $\triangleA_{ja} = \tilde{A}_{ja}$ holds for any $a \in [K]$ and $j \in \hat{I}$. 
For that purpose, fix $a \in [K]$ and consider the cases $j \in \hat I_a$ and $j \in \hat I \setminus \hat I_a$ separately. In the former case we have $\triangleA_{ja} = 1$ by the definition of $\triangleA$. 
Moreover, we have $\tilde{A}_{ja} = \sum_{b=1}^K \bar{A}_{jb} \hat{P}_{ba} = \bar{A}_{j\pi(a)} \hat{P}_{\pi(a)a} = \bar{A}_{j\pi(a)} =1$, where the final equality follows from the fact that $j \in \hat{I}_a = I_{\pi(a)}$. Overall, $\triangleA_{ja} = 1 = \tilde{A}_{ja}$ as asserted.
In the latter case, if $j \in \hat{I} \setminus \hat{I}_a$, we have $\triangleA_{ja} = 0$ by the definition of $\triangleA$. Following the same reasoning as above, we also find that $\tilde{A}_{ja} = \sum_{k=1}^K \bar{A}_{jk} \hat{P}_{ka} = \bar{A}_{j\pi(a)} \hat{P}_{\pi(a)a} = \bar{A}_{j\pi(a)} = 0$, since $j \in \hat I \setminus \hat I_a = I \setminus I_{\pi(a)}$. 

It remains to prove \eqref{item:proof_thm_stat_guarantees_Abar_3}.
For the ease of notation and without loss of generality, we make the blanket assumption that the permutation matrix $\hat{P}$ is the identity for the remainder of the proof. We will show below that the following three statements are true:

\smallskip
\begin{compactenum}
\renewcommand{\theenumi}{(B1)}
\renewcommand{\labelenumi}{\theenumi}
    \item \label{item:lemma_proof_thm_stat_guarantees_Abar_i}
        $\bar A_{\ell a} = 0 \implies \triangleAb_{\ell a } = 0$ for any $\ell \in J$ and $a \in [K]$;
\renewcommand{\theenumi}{(B2)}
\renewcommand{\labelenumi}{\theenumi}
        \item \label{item:lemma_proof_thm_stat_guarantees_Abar_ii}
        $\bar A_{\ell a} > 2 \hypersparsity  \implies \triangleAb_{\ell a } > 0$ for any $\ell \in J$ and $a \in [K]$;
\renewcommand{\theenumi}{(B3)}
\renewcommand{\labelenumi}{\theenumi}
        \item \label{item:lemma_proof_thm_stat_guarantees_Abar_iii}
        $\tau(\triangleAb_{\ell\cdot}) \le \kappa_0$ for all $\ell\in J$ with $\tau(\cdot)$ from \eqref{eq:P-and-tau}.
\end{compactenum}

\smallskip
\noindent
With these statements at hand, fix $\ell \in J$, and define $S_\ell = \supp(\bar{A}_{\ell \cdot})$. We then have
\begin{align}
\label{eq:alal}
\bar A_{\ell a} = \triangleA_{\ell a} = 0 \quad \forall a \in [K] \setminus S_\ell.
\end{align}
Indeed, for any $a\in [K] \setminus S_\ell$, we have $\bar A_{\ell a}=0$ by definition of $S_\ell$. Hence, by \ref{item:lemma_proof_thm_stat_guarantees_Abar_i}, we have $\triangleAb_{\ell a}=0$, which implies $\triangleA_{\ell a}=0$ by the definition in \eqref{eq:triangleA}. 
As consequence of \eqref{eq:alal}, we obtain that
\begin{equation*}
    \| \triangleA_{\ell \cdot} - \bar{A}_{\ell \cdot} \|_2^2 = \sum_{a \in S_\ell} \left(\triangleA_{\ell a} - \bar{A}_{\ell a} \right)^2.
\end{equation*}
Next, let $\hat S_\ell = \supp(\triangleAb_{\ell \cdot})$ and note that $\hat S_\ell \subset S_\ell$ by \ref{item:lemma_proof_thm_stat_guarantees_Abar_i}. Hence, 
\begin{equation}
\label{eq:Al-bound}
    \| \triangleA_{\ell \cdot} - \bar{A}_{\ell \cdot} \|_2^2 
    =  
    \sum_{a \in \hat S_\ell} \left(\triangleA_{\ell a} - \bar{A}_{\ell a} \right)^2 + \sum_{a \in S_\ell  \setminus \hat S_\ell} \left(\triangleA_{\ell a} - \bar{A}_{\ell a} \right)^2
    =: E_1 + E_2,
\end{equation}
where the second sum should be understand as zero if $S_\ell = \hat S_\ell$. 

We proceed by bounding $E_2$. For that purpose, note that $\triangleA_{\ell a}=0$ for any $a \in [K] \setminus \hat S_\ell$ by the definition of $\hat S_\ell$ and by definition of $\triangleA$ in \eqref{eq:triangleA}. Further, by \ref{item:lemma_proof_thm_stat_guarantees_Abar_ii}, we also have $\bar{A}_{\ell a} \le 2\hypersparsity $ for any $a \in [K] \setminus \hat S_\ell$. Hence,
\begin{equation}
\label{eq:E_2-bound}
    E_2 
    = \sum_{a \in S_\ell \setminus \hat S_\ell} \bar{A}_{\ell a}^2 
    \leq 
    |S_\ell \setminus \hat S_\ell| \cdot 4 \hypersparsity ^2.
\end{equation}

It remains to bound $E_1$. For that purpose, fix $a \in \hat S_\ell$ and note that $\tau(\triangleAb_{\ell\cdot}) \le \kappa_0 \le \hypersparsity $ by \ref{item:lemma_proof_thm_stat_guarantees_Abar_iii} and the choice of $\hypersparsity $. Hence, using the definitions of $\triangleAb$ and $\triangleA$ as given in \eqref{eq:Ahat2} and \eqref{eq:triangleA}, respectively, we have $\triangleA_{\ell a} = [\triangleAb_{\ell a} - \tau(\triangleAb_{\ell\cdot})]_+ = \triangleAb_{\ell a} - \tau(\triangleAb_{\ell\cdot}) =  \triangleAa_{\ell a} - \tau(\triangleAb_{\ell\cdot})$. Using the inequality $(x+y)^2 \le 2x^2 + 2y^2$,  which holds for any real numbers $x,y$, we obtain
\begin{equation}
\label{eq:E_1-bound1}
    E_1 
    = 
    \sum_{a \in \hat S_\ell} \left( \triangleAa_{\ell a} - \tau(\triangleAb_{\ell\cdot}) -\bar{A}_{\ell a} \right)^2 
    \le 
    2|\hat S_\ell| \hypersparsity ^2 +
    2 \sum_{a \in \hat S_\ell} \left( \triangleAa_{\ell a} -\bar{A}_{\ell a} \right)^2 . 
\end{equation}
Since we work on the event $\mathcal E$ from \eqref{eq:E-hp-event}, we have $|\hat{\chi}_{n,k}(j,\ell) - \chi(j,\ell)| \le \kappa_0$ for all $j\ne \ell$. Hence, 
since $\bar{A}_{\ell a} = |I_a|^{-1} \sum_{j \in I_a} \chi(j,\ell)$ and by the definition of $\triangleAa$ in \eqref{eq:Ahat1}, we obtain that $|\triangleAa_{\ell a} - \bar{A}_{\ell a} | \le |I_a|^{-1} \sum_{j \in I_a}|\hat{\chi}_{n,k}(j,\ell) - \chi(j,\ell)| \le \kappa_0 \le \hypersparsity $ for all $a \in [K]$. As a consequence, by \eqref{eq:E_1-bound1},
\begin{equation}
\label{eq:E_1-bound2}
E_1 \le  4|\hat S_\ell| \hypersparsity ^2,
\end{equation}
where we have used again that $\hat S_\ell \subset S_\ell$ by \ref{item:lemma_proof_thm_stat_guarantees_Abar_i}.
Overall, by \eqref{eq:Al-bound}, \eqref{eq:E_2-bound} and \eqref{eq:E_1-bound2}, we obtain that $$
    \| \triangleA_{\ell \cdot} - \bar{A}_{\ell \cdot}\|_2  
    \le 
    \sqrt{4 (|S_\ell \setminus \hat{S}_\ell| + |\hat{S}_\ell|) \hypersparsity ^2} 
    = 
    2 \sqrt{|S_\ell|} \hypersparsity 
    \le 
    2\sqrt{s} \hypersparsity ,
$$
and since $\ell \in J$ was arbitrary, we obtain the desired bound in \eqref{item:proof_thm_stat_guarantees_Abar_3}, which then in turn implies \eqref{item:thm_stat_guarantees_Abar_1}, 

It remains to prove \ref{item:lemma_proof_thm_stat_guarantees_Abar_i}-\ref{item:lemma_proof_thm_stat_guarantees_Abar_iii}.
    Regarding \ref{item:lemma_proof_thm_stat_guarantees_Abar_i}, let us fix $\ell \in J$ and $a \in [K]$. We then have $\chi(j,\ell) = \bar{A}_{\ell a}$ for any $j \in I_a$ by \eqref{eq:chiA}. As a consequence, since we work on the event $\mathcal{E}$, we have $\hat{\chi}_{n,k}(j,\ell) \le \chi(j,\ell) + \kappa_0 \le \hypersparsity $ where the last inequality follows from the choice of $\hypersparsity $ and $\bar{A}_{\ell a} = 0$. Since $j \in I_a = \hat{I}_a$ was arbitrary, we obtain that $\triangleAa_{\ell a} \le \hypersparsity $ by definition in \eqref{eq:Ahat1}, which in turn implies $\triangleAb_{\ell a} = 0$ by definition, see \eqref{eq:Ahat2}.

    To prove \ref{item:lemma_proof_thm_stat_guarantees_Abar_ii}, let us fix $\ell \in J$ and $a\in [K]$, we then have $\chi(j,\ell) = \bar{A}_{\ell a}$ for any $j \in I_a$ by \eqref{eq:chiA}. Then, on the event $\mathcal{E}$, we have $\hat{\chi}_{n,k}(j,\ell) \ge \bar{A}_{\ell a} - \kappa_0 > 2 \hypersparsity  - \kappa_0 \ge \hypersparsity $ where the last inequality follows from the choice of $\hypersparsity $. Since $j \in I_a = \hat{I}_a$ was arbitrary, we obtain that $\triangleAa_{\ell a} > \hypersparsity $ by definition in \eqref{eq:Ahat1}, which in turn implies that $\triangleAb_{\ell a} > \hypersparsity  > 0$ by definition given in \eqref{eq:Ahat2}.

    To prove \ref{item:lemma_proof_thm_stat_guarantees_Abar_iii}, let us fix $\ell \in J$. We can assume without loss of generality that the vector $\triangleAb_{\ell \cdot}$ is arranged in descending order, possibly after a reordering, i.e., $\triangleAb_{\ell 1} \ge \dots \ge \triangleAb_{\ell K}$, which in turn implies $\hat S_{\ell} := \supp(\triangleAb_{\ell \cdot}) = [p_\ell]$ where $p_\ell = |\hat S_\ell|$. With this notational convenience, we obtain, by \eqref{eq:P-and-tau},
    \begin{equation*}
        \tau(\triangleAb_{\ell \cdot}) 
        = \frac{1}{\rho}\Big(-1 + \sum_{a=1}^\rho \triangleAb_{\ell a} \Big),
        \qquad
        \rho = \max \Big\{ b \in [p_\ell] : \triangleAb_{\ell b} > \frac{1}{b} \Big( -1 + \sum_{a=1}^b \triangleAb_{\ell a} \Big) \Big\}.
    \end{equation*}
    For any $a \in [K]$, fix $j \in I_a$, and note that $\chi(j,\ell) = \bar{A}_{\ell a}$ by \eqref{eq:chiA} As a consequence, on the event $\mathcal{E}$, $\hat{\chi}_{n,k}(j,\ell) \le \bar{A}_{\ell a} + \kappa_0$. Since $j \in I_a =\hat{I}_a$ was arbitrary, we obtain that $\triangleAa_{\ell a} \le \bar{A}_{\ell a} + \kappa_0$ by definition in \eqref{eq:Ahat1}. Since $a$ was arbitrary, the inequality applies in particular to all $a \in \hat S_\ell$.
    For such $a$, we have $\triangleAa_{\ell a} = \triangleAb_{\ell a}$, see \eqref{eq:Ahat1} and \eqref{eq:Ahat2}, whence
    $\triangleAb_{\ell a} \le \bar{A}_{\ell a} + \kappa_0$. Hence, summing the above inequality over the first $b \in [p_\ell]$ terms of $\hat{\mathcal{S}}_\ell$ (noting that, by our notational convention, these correspond to the $b$ largest terms) yields 
    \[
    \sum_{a=1}^b \triangleAb_{\ell a} \leq \sum_{a=1}^b \bar{A}_{\ell a} + b \kappa_0 \le \sum_{a \in [K]} \bar{A}_{\ell a} + b \kappa_0 = 1 + b\kappa_0. 
    \]
    Rearranging terms, we obtain
    \begin{equation*}
        \frac{1}{b}\Big( -1 + \sum_{a=1}^b \triangleAb_{\ell a} \Big) \le \kappa_0.
    \end{equation*}
    Hence,
    \begin{equation*} 
        \triangleAb_{\ell b} - \frac{1}{b}\Big( -1 + \sum_{a=1}^b \triangleAb_{\ell a} \Big) \ge \triangleAb_{\ell b} - \kappa_0 > \hypersparsity  - \kappa_0 \ge 0,
    \end{equation*}
    where we used that $\triangleAb_{\ell b} > \hypersparsity $ since $b \in \hat{\mathcal{S}}_{\ell}$ and where the last inequality follows from the choice of $\hypersparsity $. 
    Since $b \in [p_\ell]$ was arbitrary, we obtain that $\rho = p_\ell$ and hence
    \begin{equation*}
        \tau(\triangleAb_{\ell \cdot}) = \frac{1}{\rho}\Big(-1 + \sum_{a=1}^\rho \triangleAb_{\ell a} \Big) \le \kappa_0,
    \end{equation*}
    as asserted in \ref{item:lemma_proof_thm_stat_guarantees_Abar_iii}.

We now prove \eqref{item:thm_stat_guarantees_Abar_3}, for which it is sufficient to show that
\begin{align} 
\label{eq:a2-a-support}
\supp(\triangleAb_{\ell \cdot}) = \supp(\triangleA_{\ell \cdot}) \qquad \forall \ell \in J .
\end{align}
Indeed, since $\eta > 2 \hypersparsity $ and $\hypersparsity  \ge \kappa_0$, we have $\supp(\bar{A}_{\ell \cdot}) = \supp(\triangleAb_{\ell\cdot})$ for any $\ell \in J$ by \ref{item:lemma_proof_thm_stat_guarantees_Abar_i} and \ref{item:lemma_proof_thm_stat_guarantees_Abar_ii}. For the proof of \eqref{eq:a2-a-support}, fix $\ell \in J$. By the definition of $\triangleA_{\ell a}$, we have $\triangleA_{\ell a} = 0$ whenever $\triangleAb_{\ell \cdot} = 0$ for some $a \in [K]$, which implies $\supp(\triangleA_{\ell \cdot}) \subseteq \supp(\triangleAb_{\ell \cdot})$. For the reverse inclusion, consider $a \in [K]$ such that $\triangleAb_{\ell a} > 0$. By definition in \eqref{eq:Ahat2}, this is equivalent to $\triangleAb_{\ell a} > \hypersparsity $. By \ref{item:lemma_proof_thm_stat_guarantees_Abar_iii}, we further know that $\tau := \tau(\triangleAb_{\ell \cdot}) \le \kappa_0$. Thus, by definition in \eqref{eq:triangleA}, $\triangleA_{\ell a} = [\triangleAb_{\ell a} - \tau]_+ = \triangleAb_{\ell a} - \tau > \hypersparsity  - \kappa_0 \ge 0$, where the last inequality follows from the choice of $\hypersparsity $.

Finally, the assertion in \eqref{item:thm_stat_guarantees_Abar_5} is an immediate consequence of $\hat{I} = I$, $\hat{K}=K$, and \eqref{item:thm_stat_guarantees_Abar_3}. 
\end{proof}

\begin{proof}[Proof of Proposition~\ref{prop:max-linear-frechet-second-order}]
    Fix $1 \le j < \ell \le d$. Using the properties of $\bm Z$ we have, for any $x_j, x_\ell > 0$,
    \begin{align*}
        F_{j,\ell}(x_j,x_\ell) 
        \equiv 
        \Prob\{ X_j \le x_j, X_\ell \le x_\ell \} 
        &=
        \mathbb{P}\Big\{ \max_{a \in [K]} A_{ja}Z_a \le x_j, \max_{a \in [K]} A_{\ell a}Z_a \le x_\ell \Big\}  \\
        &= 
        \prod_{a \in [K]} \mathbb{P}\Big\{ Z_a \le \min \Big\{ \frac{x_j}{A_{ja}}, \frac{x_\ell}{A_{\ell a}}\Big\} \Big\} 
        \\&= 
        \exp\Big\{ - \sum_{a \in [K]} \max\Big\{ \left( \frac{A_{j a}}{x_j} \right)^\alpha, \Big( \frac{A_{\ell a}}{x_\ell} \Big)^\alpha \Big\}\Big\},
    \end{align*}
with marginal cdfs
    \[
        F_k(x_k)= \Prob\{ X_k \le x_k\}
        =
        \mathbb{P}\Big\{ \max_{a\in[K]}  A_{ka} Z_a \le x_k \Big\} = \exp \Big\{ - \sum_{a \in [K]} \Big(\frac{A_{ka}}{x_k}\Big)^\alpha \Big\}, \quad k \in \{j,\ell\}.
    \]
Hence, for $u_k \in (0,1)$,
    \[
        F_k^{-1}(u_k) = \{ -\ln(u_k) \}^{-1/\alpha} \Big(\sum_{a \in [K]} A_{ka}^\alpha \Big)^{1/\alpha},
    \]
which implies that the copula $C_{j, \ell}$ of $(X_j, X_\ell)$ can be written as
\begin{align*}
        C_{j,\ell}(u_j,u_\ell) 
        = 
       F_{j, \ell} \big(( F_j^{-1}(u_j), F_\ell^{-1}(u_\ell) \big) 
       =
       \exp\Big\{ - \sum_{a \in [K]} \max\left\{ \bar{A}_{ja}(-\ln(u_j)), \bar{A}_{\ell a} (-\ln(u_\ell)) \right\} \Big\}.
\end{align*}
    Since the respective survival copula satisfies $\bar C_{j,\ell}(u_j,u_\ell) = u_j+v_j- 1 +C_{j,\ell}(1-u_j,1-u_\ell)$, we obtain that
    \[
        \chi_t(j, \ell) \equiv t\bar C_{j,\ell}\Big(\frac{1}{t}, \frac{1}{t} \Big)
        = 
        2 - t + t\exp\Big\{ \ln\Big( 1-\frac{1}{t} \Big)\sum_{a \in [K]}  \bar{A}_{ja} \vee \bar{A}_{\ell a}  \Big\}, \quad t > 1.
    \]
    Recalling $\chi(j,\ell) = \sum_{a \in [K]} \bar{A}_{ja} \wedge \bar{A}_{\ell a} = 2 - \sum_{a \in [K]} \bar{A}_{ja} \vee \bar{A}_{\ell a}$ by \eqref{eq:chiA}, we obtain that
    \begin{align*}
        \left| \chi_t(j, \ell) - \chi(j,\ell) \right| 
        &= \Big| \sum_{a \in [K]} \bar{A}_{j a} \vee \bar{A}_{\ell a} - t\Big( 1 -\exp\Big\{ \ln\Big( 1-\frac{1}{t} \Big)\sum_{a \in [K]}  \bar{A}_{ja} \vee \bar{A}_{\ell a}  \Big\}\Big)\Big|.
    \end{align*}
Consider the function $g:[1, \infty) \times [1,2]$ defined by 
    \[
        g(t,c) = t\Big[c - t\Big( 1 -\exp\Big\{ \ln\Big( 1-\frac{1}{t} \Big)c \Big\}\Big)\Big], \quad t \ge 1, c \in [1,2].
    \]
    Since $t \mapsto g(t,c)$ is a non-increasing function with
    $g(1,c) = c-1 \in [0,1]$ 
    we obtain that, for any $t > 1$,
    \begin{align*}
        \left| \chi_t(j, \ell) - \chi(j,\ell) \right|  
        &= 
        t^{-1} g\Big(t,\sum_{a \in [K]} \bar{A}_{ja} \vee \bar{A}_{\ell a} \Big)
        \\ & \le 
        t^{-1}
        g\Big(1, \sum_{a \in [K]} \bar{A}_{ja} \vee \bar{A}_{\ell a} \Big)
        = 
        t^{-1} \Big(\sum_{a \in [K]}\bar{A}_{ja} \vee \bar{A}_{\ell a}-1\Big)
        =
        t^{-1} \big( 1-\chi(j,\ell) \big).
    \end{align*}
    The proof is finished.
\end{proof}

\begin{proof}[Proof of Theorem~\ref{thm:high_dim2}]
Throughout the proof, we write $r=r_n, s=s_n$ etc. Since $C_{j,\ell}^{(n)} \in \mathcal{X}(r, M)$ for all $1\le j < \ell \le d$, we obtain that $D(k/n)$ from \eqref{eq:dkn} satisfies $D(k/n) \le M (k/n)^r$. Recall the definition of $\kappa_0$ from \eqref{eq:kappa0}. Observing that $2 \le \ln(8) \le \ln(4d) \le \ln(4d/\delta)$ for all $\delta \in (0,1)$ and $d \in \N_{\ge 2}$, we have
    \begin{align*}
        \kappa_0 
        &\le 
        M\Big( \frac{k}{n} \Big)^{r} + \frac{\sqrt{2}+ \sqrt{16 \ln(d/\delta)} + \sqrt{16 \ln(4d/\delta)}}{\sqrt{k}} + \frac{6+4\ln(d/\delta) + 8\ln(4d/\delta)}{3k}
        \\&\le
        M\Big( \frac{k}{n} \Big)^{r} + 9 \frac{\sqrt{\ln(4d/\delta)}}{\sqrt k} + 5 \frac{\ln(4d/\delta)}{ k}.
    \end{align*}
    Subsequently, we apply this inequality with $\delta=n^{-2}$ and $k$ as in \eqref{eq:def-k-asymptotic-2}. Since $c_k$ is assumed to be bounded away from zero, we have $k \ge 2$ for sufficiently large $n$ and hence
    \[
    (c_k/2)\{ \ln(4dn^2) \}^{1/(2r+1)} n^{2r/(2r+1)}  \le k \le c_k\{ \ln(4dn^2) \}^{1/(2r+1)} n^{2r/(2r+1)}.
    \]
    As a consequence,  the upper bound in the penultimate display can be upper bounded by
    \[
    M c_k^r \Big( \frac{\log(4dn^2)}{n} \Big)^{r/(2r+1)}
    +
    \frac{9}{\sqrt{c_k}} \Big( \frac{\log(4dn^2)}{n} \Big)^{r/(2r+1)}
    +
    \frac{5}{c_k}\Big( \frac{\log(4dn^2)}{n} \Big)^{2r/(2r+1)}.
    \]
    Since $\log(4dn^2)  \le n$ for all sufficiently large $n$ (by our assumption $\log d = o(n)$), we overall obtain that
    \[
    \kappa_0 
    \le 
    \left( Mc_k^r+ \frac{9}{\sqrt{c}_k} + \frac{5}{c_k}\right) \Big( \frac{\log(4dn^2)}{n} \Big)^{r/(2r+1)} \le  \hyperpurevar,
    \]
with $\hyperpurevar$ as defined in \eqref{eq:def-k-asymptotic-2}. Further, note that $\hyperpurevar$ converges to zero by assumption on $M, d$ and $r$. As a consequence, since $\liminf_{n \to \infty} \eta_n>0$ and since $c_k,c_\hyperpurevar$ are bounded in $n$, the inequality $\hyperpurevar< \eta_n/2$ is met for all sufficiently large $n$. Hence, all assumptions of Theorems~\ref{thm:consistency_K_pure2} and \ref{thm:stat_guarantees_Abar} are met, for sufficiently large $n$. For such $n$, we obtain that 
\[
    \mathbb{P}\big\{ \hat{I}_n = I_n, \hat{K}_n = K_n \big\} \ge 1-n^{-2}, 
    \quad 
    \mathbb{P}\big\{ \hat{s}_n = s_n \big\}  \ge 1-n^{-2}, 
    \quad
    \mathbb{P}\big\{ L_{\infty,2}(\triangleA, \bar{A}) \le 2 \sqrt{s} \hyperpurevar \big\}  \ge 1-n^{-2}.
\]
The assertion then follows from the Borel-Cantelli Lemma, observing that $c_\hyperpurevar$ is bounded in $n$ by assumption. 
\end{proof}

\section{Concentration results for the empirical tail correlation matrix} \label{sec:concentration}

Throughout, let $\bm X_1, \dots, \bm X_n$ be an i.i.d.\ sample of $\bm X$, a $d$-dimensional random vector with continuous marginal cdfs $F_1, \dots, F_d$. 
Recall the empirical pairwise extremal correlation with parameter $k \in [n]$ from \eqref{eq:empirical_correlation}, that is, 
\begin{align*}
    \hat{\chi}_{n, \knon}(j,\ell) = \frac1{\knon} \sum_{i=1}^n  \bm 1 ( R_{i,j}>n-\knon, R_{i,\ell}>n-\knon ), \qquad j, \ell\in[d]. 
\end{align*}
Further, let
\begin{equation*}
    \chi_{n, \knon}(j,\ell) = \frac{n}{\knon} \mathbb{P}\left\{ F_j(X_j) > 1-\knon /n , F_\ell(X_\ell) > 1- \knon/n \right\}.
\end{equation*}

\begin{proposition}
    \label{prop:concentration-tail-correlation}
    Let $\bm X_1, \dots, \bm X_n$ be i.i.d.\ $d$-dimensional random vectors with continuous marginal cdfs $F_1, \dots, F_d$. Fix $k\in[n]$ and $\delta \in (0,1)$. For any $j,\ell \in [d], j \ne \ell$, with probability larger than $1-\delta$, we have
    \begin{equation*}
        |\hat{\chi}_{n, \knon}(j,\ell) - \chi_{n, \knon}(j,\ell)| \leq \frac{\sqrt{2}+ \sqrt{8 \ln(2/\delta)} + \sqrt{8 \ln(8/\delta)}}{\sqrt{\knon}} + \frac{6+2\ln(2/\delta) + 4\ln(8/\delta)}{3\knon}.
    \end{equation*}
\end{proposition}

The proof of Proposition \ref{prop:concentration-tail-correlation} draws inspiration from \cite[Propositions 4.1 and 4.2]{clemencon2024regular}. Their approach relies on concentrating a covariance operator through a Bernstein-type inequality, as established in \cite[Theorem 3.8]{McDiarmid1998}, which we restate here for clarity. Here and throughout we adopt the shorthand notation $z_{i:j} = (z_i,\dots,z_j)$.

\begin{lemma}
    \label{lem:ber_mcd}
    Let $Z = (Z_{1:n})$ with $Z_i$ taking their values in a set $\mathcal{Z}$ and let $f$ be a real-valued function defined on $\mathcal{Z}^n$. Consider the positive deviation functions, defined for $i \in [n]$ and for $z_{1:i} \in \mathcal{Z}^i$
    \begin{equation*}
        g_i(z_{1:i}) = \mathbb{E}\left[ f(Z_{1:n} ) \mid Z_{1:i} = z_{1:i} \right] - \mathbb{E}\left[ f(Z_{1:n}) \mid  Z_{1:i-1} = z_{1:i-1} \right].
    \end{equation*}
    Denote by $b$ the maximum deviation
    \begin{equation*}
        b = \underset{i \in [n]}{\max} \, \underset{z_{1:i} \in \mathcal{Z}^i}{\sup} \, g_i(z_{1:i}),
    \end{equation*}
    and let ${v}$ denote the supremum of the sum of variances,
    \begin{equation*}
        {v} = \underset{(z_1,\dots,z_n) \in \mathcal{Z}^n}{\sup} \sum_{i=1}^n \Var[g_i(z_{1:i-1}, Z_i)]
    \end{equation*}
    If $b$ and ${v}$ are both finite, then, for all $\eps>0$,
    \begin{equation} \label{eq:mcdiarmid}
        \mathbb{P}\left\{ f(Z_{1:n}) - \mathbb{E}[f(Z_{1:n})] \geq \eps \right\} \leq \exp \left\{ -\frac{\eps^2}{2({v}+b\eps/3)}\right\}.
    \end{equation}
\end{lemma}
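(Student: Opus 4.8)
The plan is to recognize $f(Z_{1:n}) - \mathbb{E}[f(Z_{1:n})]$ as the terminal increment of a Doob martingale and to run the classical Freedman/Bernstein exponential-moment argument on its increments. First I would fix the filtration $\mathcal{F}_i = \sigma(Z_{1:i})$ (with $\mathcal{F}_0$ trivial) and set $D_i = \mathbb{E}[f(Z_{1:n}) \mid \mathcal{F}_i]$. The tower property gives $D_i - D_{i-1} = g_i(Z_{1:i})$, so $(g_i(Z_{1:i}))_i$ is a martingale-difference sequence with $\mathbb{E}[g_i(Z_{1:i}) \mid \mathcal{F}_{i-1}] = 0$ and
\[
f(Z_{1:n}) - \mathbb{E}[f(Z_{1:n})] = D_n - D_0 = \sum_{i=1}^n g_i(Z_{1:i}).
\]
By hypothesis each increment satisfies $g_i \le b$, and its conditional variance, evaluated at the realized past $Z_{1:i-1}$, equals $\Var[g_i(Z_{1:i-1}, Z_i)]$ (the conditional law of $Z_i$ given $Z_{1:i-1}$; in the iid application this is just the marginal law). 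The defining supremum of $v$ then yields the almost-sure bound $\sum_{i=1}^n \Var[g_i(Z_{1:i}) \mid \mathcal{F}_{i-1}] \le v$.

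The analytic core is a one-step exponential-moment estimate for a bounded, mean-zero random variable. For $\lambda > 0$ and any real $X$ with $X \le b$, $\mathbb{E}[X]=0$ and $\Var(X) = \sigma^2$, the monotonicity of $x \mapsto (e^{\lambda x} - 1 - \lambda x)/x^2$ on $\mathbb{R}$ gives $e^{\lambda x} \le 1 + \lambda x + \psi(\lambda)\, x^2$ for all $x \le b$, where $\psi(\lambda) := (e^{\lambda b} - 1 - \lambda b)/b^2 \ge 0$; taking expectations and using $1 + y \le e^y$ yields $\mathbb{E}[e^{\lambda X}] \le \exp(\psi(\lambda)\sigma^2)$. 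Applying this conditionally to each increment, I would introduce the process
\[
M_i = \exp\Big( \lambda \sum_{j=1}^i g_j(Z_{1:j}) - \psi(\lambda) \sum_{j=1}^i \Var[g_j(Z_{1:j}) \mid \mathcal{F}_{j-1}] \Big),
\]
whose compensator is $\mathcal{F}_{i-1}$-measurable, so that $\mathbb{E}[M_i \mid \mathcal{F}_{i-1}] \le M_{i-1}$; hence $(M_i)$ is a supermartingale with $\mathbb{E}[M_n] \le M_0 = 1$. Bounding the compensator below by $\psi(\lambda) v$ (using the almost-sure variance bound together with $\psi(\lambda) \ge 0$) gives the aggregated moment-generating-function estimate
\[
\mathbb{E}\big[ e^{\lambda (D_n - D_0)} \big] \le \exp\big( \psi(\lambda)\, v \big) = \exp\Big( \tfrac{v}{b^2}(e^{\lambda b} - 1 - \lambda b)\Big).
\]

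Finally I would apply the Chernoff bound $\mathbb{P}\{D_n - D_0 \ge \eps\} \le e^{-\lambda \eps}\, \mathbb{E}[e^{\lambda(D_n - D_0)}]$ and optimize over $\lambda > 0$. The minimizer $\lambda = b^{-1}\ln(1 + b\eps/v)$ produces the Bennett rate $\exp\!\big(-\tfrac{v}{b^2} h(b\eps/v)\big)$ with $h(u) = (1+u)\ln(1+u) - u$, and the elementary inequality $h(u) \ge u^2/(2 + \tfrac{2}{3}u)$ converts this into the stated Bernstein form $\exp\!\big(-\eps^2/(2(v + b\eps/3))\big)$. I expect the main obstacle to lie not in any single deep step but in careful bookkeeping around two scalar inequalities — the per-step bound $e^{\lambda x} \le 1 + \lambda x + \psi(\lambda)x^2$ via monotonicity, and the rate-function inequality $h(u) \ge u^2/(2 + \tfrac23 u)$ — together with ensuring that the conditional-variance control $\sum_i \Var[g_i \mid \mathcal{F}_{i-1}] \le v$ holds almost surely so that it can be extracted deterministically from the supermartingale estimate.
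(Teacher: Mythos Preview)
Your argument is correct and follows the standard Freedman--Bennett route (Doob martingale, one-step MGF bound via monotonicity of $(e^{\lambda x}-1-\lambda x)/x^2$, supermartingale, Chernoff optimization, then $h(u)\ge u^2/(2+2u/3)$). Note, however, that the paper does not prove this lemma at all: it is simply quoted as \cite[Theorem 3.8]{McDiarmid1998} and used as a black box, so there is no ``paper's own proof'' to compare against beyond the cited reference, whose argument is essentially the one you sketch.

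One small wording point: when you say ``bounding the compensator below by $\psi(\lambda)v$'', what you actually need is $\sum_{j} \Var[g_j(Z_{1:j})\mid \mathcal F_{j-1}] \le v$ almost surely, i.e., an \emph{upper} bound on the (positive) variance sum, so that $e^{\lambda(D_n-D_0)} = M_n\, e^{\psi(\lambda)\sum_j \Var[\cdot\mid\mathcal F_{j-1}]} \le M_n\, e^{\psi(\lambda)v}$ and then $\mathbb{E}[e^{\lambda(D_n-D_0)}]\le e^{\psi(\lambda)v}$. The mathematics you intend is right; just tighten the phrasing.
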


\begin{proof}[Proof of Proposition \ref{prop:concentration-tail-correlation}] 
Fix $j,\ell\in[d]$ with $j\ne \ell$. Let  $U_j = F_j(X_j)$ and $U_{i,j} = F_j(X_{i,j})$ for $i \in [n]$. Denote by $\hat F_{n,j}$ the empirical distribution function of $U_{1,j},\dots,U_{n,j}$.

In view of the fact the empirical extremal correlation is a rank-based estimator, we have, almost surely,
\begin{equation*}
    \hat{\chi}_{n, \knon}(j,\ell) = \knon^{-1} \sum_{i=1}^n \mathds{1}_{\{ \hat F_{n,j}(U_{i,j}) > 1-k/n \textrm{ or } \hat F_{n,\ell}(U_{i,\ell}) > 1 - k/n  \}}.
\end{equation*}
Next, by the inclusion-exclusion principle and the triangle inequality, we have
\begin{align}
    \label{eq:inc_exc}
        &\phantom{{}={}} \nonumber
        |\hat{\chi}_{n, \knon}(j,\ell) - \chi_{n, \knon}(j,\ell)|  
        \\ &= 
        \Big|\frac{1}{k} \sum_{i=1}^n \mathds{1}_{\{ \hat F_{n,j}(U_{i,j}) > 1-k/n \textrm{ or } \hat F_{n,\ell}(U_{i,\ell}) > 1 - k/n  \}} - \frac{n}{k} \mathbb{P}\left\{ U_{j} > 1-k/n \textrm{ or } U_{\ell} > 1-k/n \right\} \Big| \nonumber \\
        &\le D_1 + D_2,
\end{align}
where 
\begin{align*}
    D_1 
    &= 
    \Big|\frac{1}{k} \sum_{i=1}^n \mathds{1}_{\{ \hat F_{n,j}(U_{i,j}) > 1-k/n \textrm{ or } \hat F_{n,\ell}(U_{i,\ell}) > 1 - k/n  \}} - \frac{1}{k} \sum_{i=1}^n \mathds{1}_{\{ U_{i,j} > 1-k/n \textrm{ or } U_{i,\ell} > 1 - k/n  \}}\Big|, \\ 
    D_2 
    &=
    \Big|\frac{1}{k} \sum_{i=1}^n \mathds{1}_{\{ U_{i,j} > 1-k/n \textrm{ or } U_{i,\ell} > 1 - k/n  \}} - \frac{n}{k} \mathbb{P}\left\{ U_{j} > 1-k/n \textrm{ or } U_{\ell} > 1-k/n \right\} \Big|.
\end{align*}
    We start by treating  $D_2$. Let $\bm{Z}_{1:n} = (\bm{Z}_1,\dots,\bm{Z}_n)$ where $\bm{Z}_i = (U_{i,j}, U_{i,\ell})$ and $\bm{z}_{1:n} = (\bm{z}_1,\dots,\bm{z}_n)$ where $\bm{z}_i = (u_{i,j}, u_{i,\ell}) \in [0,1]^2$ for $i \in [n]$. Consider the map $f_{n} \colon ([0,1]\times [0,1])^n \rightarrow \mathbb{R}$ defined by
    \begin{align*}
f_n(\bm{z}_{1:n}) = \frac{1}{k}\left| \sum_{i=1}^n \left( \mathds{1}_{\{ u_{i,j} > 1-k/n \textrm{ or } u_{i,\ell} > 1 - k/n  \}}  - \mathbb{P}\left\{ U_{j} > 1-k/n \textrm{ or } U_{\ell} > 1-k/n \right\} \right) \right|.
\end{align*}
We will apply Lemma \ref{lem:ber_mcd} to the function $f = f_n$. To do so, we derive an upper bound on the maximum deviation term $b$ and on the maximum sum of variances $v$ from the statement. For that purpose, note that, by the inverse triangle inequality, 
\begin{align}
\label{eq:fnidff}
|f_n(\bm z_{1:n}) - f_n(\bm z'_{1:n})| 
\le 
\frac1k \Big|\sum_{i=1}^n \mathds{1}_{\{u_{i,j} > 1-k/n \textrm{ or } u_{i,\ell} > 1 - k/n\}} - \mathds{1}_{\{u'_{i,j} > 1-k/n \textrm{ or } u'_{i,\ell} > 1 - k/n\}} \Big|
\end{align}
for any $\bm{z}_{1:n}, \bm z'_{1:n} \in ([0,1] \times [0,1])^n$. As a consequence, the maximum deviation $b$ is bounded by $1/k$: indeed, for any $i\in[n]$ and any $\bm{z}_{1:i} \in ([0,1]\times [0,1])^i$, we have, by independence among $\bm{Z}_i$'s,
\begin{multline*}
    g_i(\bm{z}_{1:i}) = \mathbb{E}\left[ f_n(\bm{z}_1,\dots,\bm{z}_{i-1}, \bm{z}_i, \bm{Z}_{i+1},\dots,\bm{Z}_n) - f_n(\bm{z}_1,\dots,\bm{z}_{i-1},\bm{Z}_i,\bm{Z}_{i+1},\dots,\bm{Z}_n) \right] \\
    \leq \frac{1}{k} \mathbb{E}\Big[ \Big| \mathds{1}_{\{ u_{i,j} > 1-k/n \textrm{ or } u_{i,\ell} > 1-k/n \}} - \mathds{1}_{ \{ U_{i,j} > 1-k/n \textrm{ or } U_{i,\ell} > 1-k/n \} }\Big| \Big] 
    \leq \frac{1}{k}.
\end{multline*}
It remains to bound the variance term. Since $\mathbb{E}[g_i(\bm{z}_{1:i-1}, \bm{Z}_i)] = 0$ for any $i\in[n]$, we may write, for $\bm{Z}_i' = (U_{i,j}', U_{i,\ell}')$ an independent copy of $\bm{Z}_i = (U_{i,j}, U_{i,\ell})$ and by \eqref{eq:fnidff},
\begin{align*}
\Var(g_i(\bm{z}_{1:i-1}, \bm{Z}_i)) 
&= 
\mathbb{E}\left[ g_i^2(\bm{z}_{1:i-1},\bm{Z}_i') \right] 
\\&= 
\mathbb{E}\Big[ \Big| \mathbb{E}\big[f_n(\bm{z}_{1:i-1}, \bm{Z}_i', \bm{Z}_{i+1},\dots,\bm{Z}_n) - f_n(\bm{z}_{1:i-1}, \bm{Z}_i, \bm{Z}_{i+1},\dots,\bm{Z}_n)\big] \Big|^2 \Big] 
\\&\leq 
\frac{1}{k^2} \mathbb{E}\Big[ \Big| \mathds{1}_{\{ U_{i,j}' > 1-k/n \textrm{ or } U_{i,\ell}' > 1-k/n \}} - \mathds{1}_{\{ U_{i,j} > 1-k/n \textrm{ or } U_{i,\ell} > 1-k/n \}}\Big|^2 \Big] 
\\&\leq 
\frac{2}{k^2} \mathbb{P}\left\{ U_{i,j} > 1 - k/n \textrm{ or } U_{i,\ell} > 1-k/n \right\} \\
&\leq 
\frac{4}{kn}.
\end{align*}
Hence ${v}$ is bounded from above by $4/k$. As a consequence, since the upper bound in \eqref{eq:mcdiarmid} is increasing in $b$ and $v$, we obtain, by an application of Lemma \ref{lem:ber_mcd},
\begin{equation}
\label{eq:bern1}
    \mathbb{P}\left\{ f(\bm{Z}_{1:n}) - \mathbb{E}[f(\bm{Z}_{1:n})] \geq \eps \right\} 
    \leq 
    \exp\left\{- \frac{k\eps^2}{8+2\eps/3}\right\}.
\end{equation}

Next, let us bound $\mathbb{E}[f(\bm{Z}_{1:n})]$. Writing
\begin{equation*}
    \Delta_i = \mathds{1}_{ \{ U_{i,j} > 1-k/n \textrm{ or } U_{i,\ell} > 1-k/n \} } - \mathbb{P}\left\{ U_{i,j} > 1-k/n \textrm{ or } U_{i,\ell} > 1-k/n \right\}, \quad i \in [n],
\end{equation*}
we have, by Jensen's inequality and independence of $\Delta_1, \dots, \Delta_n$,
\begin{align*}
    \big( \mathbb{E}[f(\bm{Z}_{1:n})]   \big)^2
    = 
    \frac{1}{k^2} \Big( \mathbb{E}\Big[ \sum_{i=1}^n \Delta_i \Big] \Big)^2
    &\leq 
    \frac{1}{k^2} \mathbb{E}\Big[ \Big( \sum_{i=1}^n \Delta_i \Big)^2 
    \Big] 
    =
    \frac{1}{k^2} \Var  \Big( \sum_{i=1}^n \Delta_i \Big) 
    =
    \frac{n}{k^2}  \Var(\Delta_1)  
    \\&\leq 
    \frac{n}{k^2}  \mathbb{P}\left\{ U_{1,j} > 1-k/n \textrm{ or } U_{1,\ell} > 1-k/n \right\} 
    \leq 
    \frac{2}{k}.
\end{align*}
Hence, $|\mathbb{E}[f(\bm{Z}_{1:n})] | \le \sqrt{2/ k}$, so that, 
\begin{align}
\label{eq:bound2}
\Prob\big(D_2 > \eps_2 + \sqrt{2/k} \big)
&= \nonumber
\mathbb{P}\left\{ f(\bm{Z}_{1:n}) - \mathbb{E}[f(\bm{Z}_{1:n})] + \mathbb{E}[f(\bm{Z}_{1:n})] > \eps_2 + \sqrt{2/k} \right\} 
\\&\le 
\mathbb{P}\left\{ f(\bm{Z}_{1:n}) - \mathbb{E}[f(\bm{Z}_{1:n})]  > \eps_2 \right\} \le \delta/2, 
\end{align}
where, in view of \eqref{eq:bern1}, the last inequality is correct if we choose
\[
\eps_2= \sqrt{\frac{8 \ln(2/\delta)}k} + \frac{2\ln(2/\delta)}{3k}.
\]
Indeed, a simple calculation shows that $\eps_2^2 \ge 8 \ln(2/\delta)/k + 2 \eps_2  \log(2/\delta)/(3k)$, which in turn is equivalent to $\exp(-k\eps_2^2/(8+2\eps_2/3)) \le \delta/2$.

It remains to bound $D_1$ from the right-hand side in \eqref{eq:inc_exc}. One has
\begin{align*}
    D_1
    &\leq 
    \frac{1}{k} \sum_{i=1}^n \left| \mathds{1}_{ \{ \hat F_{n,j}(U_{i,j}) > 1-k/n \textrm{ or } \hat F_{n,\ell}(U_{i,\ell}) > 1-k/n \} } -  \mathds{1}_{\{ U_{i,j} > 1-k/n \textrm{ or } U_{i,\ell} > 1 - k/n  \}} \right| \\
    &= 
    \frac{1}{k} \sum_{i=1}^n \left| \mathds{1}_{ \{ \hat F_{n,j}(U_{i,j}) \leq 1-k/n \textrm{ and } \hat F_{n,\ell}(U_{i,\ell}) \leq 1-k/n \} } -  \mathds{1}_{\{ U_{i,j} \leq 1-k/n \textrm{ and } U_{i,\ell} \leq 1 - k/n  \}} \right|
    \leq D_1(j) + D_1(\ell)
\end{align*}
where
\begin{align*}
    D_1(j) = \frac{1}{k} \sum_{i=1}^n \left| \mathds{1}_{ \{ \hat F_{n,j}(U_{i,j}) \leq 1-k/n\} } -  \mathds{1}_{\{ U_{i,j} \leq 1-k/n\}} \right| 
\end{align*}
Hence, by equidistribution, $\Prob(D_1> \eps) \le 2 \Prob\{ D_1(j)>\eps/2 \}$, so it suffices to bound $D_1(j)$. We have $D_1(j) \le E_1 + E_2$,
where
\begin{align*}
    E_1 = 
    \frac{1}{k} \sum_{i=1}^n \left| \mathds{1}_{ \{ \hat F_{n,j}(U_{i,j}) \leq 1-k/n\} } -  \mathds{1}_{\{ U_{i,j} \leq \hat F_{n,j}^\leftarrow(1-k/n)\}} \right|, \qquad 
    E_2
    = 
    \frac{1}{k} \sum_{i=1}^n \left| \mathds{1}_{ \{ U_{i,j} \leq \hat F_{n,j}^\leftarrow(1-k/n)\} } -  \mathds{1}_{\{ U_{i,j} \leq 1-k/n\}} \right|,
\end{align*}
and where $\hat F_{n,j}^\leftarrow$ denotes the left-continuous generalized inverse of $\hat F_{n,j}$.  
Some thoughts reveal that, from the $n$ summands in $E_1$, only one is non-zero, which gives the bound $E_1 \le 1/k$. Regarding $E_2$, note that
\begin{equation*}
    \hat F_{n,j}^\leftarrow(1-k/n) = U_{(n:n-k),j},
\end{equation*}
where $U_{(n:1),j} < \dots < U_{(n:n),j}$ denotes the order statistics of the sample $U_{1,j},\dots,U_{n,j}$. Then, writing $A=\{ 1-k/n \leq U_{(n:n-k),j} \}$,
\begin{align*}
    E_2
    &= 
    \frac{1}{k} \sum_{i=1}^n \left| \mathds{1}_{ \{ U_{i,j} > U_{(n:n-k),j} \} } - \mathds{1}_{ \{ U_{i,j} > 1-k/n \} }\right| \\
    &= 
    \mathds{1}_{A} \times \frac{1}{k} \sum_{i=1}^n \left( \mathds{1}_{ \{ U_{i,j} > 1-k/n \} } - \mathds{1}_{ \{ U_{i,j} > U_{(n:n-k),j} \} }\right) 
    +
    \mathds{1}_{ A^c } \times \frac{1}{k} \sum_{i=1}^n \left( \mathds{1}_{ \{ U_{i,j} > U_{(n:n-k),j} \} } - \mathds{1}_{ \{ U_{i,j} > 1-k//n \} }\right)
    \\&= 
     \mathds{1}_{ A } \times  \Big( \frac{1}{k} \sum_{i=1}^n \mathds{1}_{ \{ U_{i,j} > 1-k/n \} }  - 1 \Big)
    +
    \mathds{1}_{A^c } \times \Big( 1 - \frac{1}{k} \sum_{i=1}^n  \mathds{1}_{ \{ U_{i,j} > 1-k//n \} }\Big),
\end{align*}
where we used that $\sum_{i=1}^n \mathds{1}_{ \{ U_{i,j} > U_{(n:n-k),j} \} } = k$ as a consequence of the fact that there are no ties with probability one. Therefore,
\begin{align*}
    E_2
    &=  \Big| \frac{1}{k} \sum_{i=1}^n \mathds{1}_{ \{ U_{i,j} > 1-k/n \}} - 1  \Big|,
\end{align*}
where the last equality follows from the fact that, on the event $A=\{1-k/n \leq U_{(n:n-k),j}\}$, we have $k^{-1}\sum_{i=1}^n \mathds{1}_{ \{ U_{i,j} \ge 1-k/n \} } \ge k^{-1}\sum_{i=1}^n \mathds{1}_{ \{ U_{i,j} \ge U_{(n,n-k),j} \} } = k$, while on the complementary event, the reverse inequality holds. 
Next, note that $\sum_{i=1}^n \mathds{1}_{ \{ U_{i,j} > 1-k/n \} }$ follows a binomial distribution with success probability $k/n)$. As a consequence, by the classical Bernstein inequality \cite[Theorem 2.7]{McDiarmid1998},
\begin{align*}
    \mathbb{P}\big\{E_2 \geq \eps \big\}
    &\leq 
    \mathbb{P}\Big\{ \Big| \sum_{i=1}^n \mathds{1}_{ \{ U_{i,j} > 1-k/n \} } - k \Big| \geq k\eps \Big\} 
    \leq 
    2 \exp \Big\{ -\frac{k\epsilon^2}{2+2\eps/3} \Big\}.
\end{align*}
Overall, 
\begin{align} \label{eq:bound1}
\Prob\big\{ D_1 > \eps_1 + 2/k \big\}
\le 
2 \Prob\big\{ D_1(j) > \eps_1/2 + 1/k\big\}
\le
2 \Prob \big\{E_2 > \eps_1/2 \big\}
\le
4 \exp \Big\{ -\frac{k\epsilon_1^2}{8+4\eps_1/3} \Big\},
\end{align}
and similar as in the bound for $D_2$, the right-hand side is bounded by $\delta/2$ if we choose
\[
\eps_1= \sqrt{\frac{8 \ln(8/\delta)}{k}} + \frac{4\ln(8/\delta)}{3k}.
\]
The assertion then follows from \eqref{eq:bound2} and \eqref{eq:bound1}.
\end{proof}

\section{Further auxiliary results}
\label{sec:additional}

\begin{lemma} \label{lemma:stable-tail-dependence-spectral-measure-old}
    Suppose $\bm Y \in [0,\infty)^d$ is regularly varying with tail index $\alpha>0$ and spectral measure $\Phi_{\bm Y}$, and has continuous marginal cdfs $G_1, \dots, G_d$. Then the limits 
    \begin{align} \label{eq:t_j-definition}
    t_j :=  \lim_{x \to \infty} \frac{\Prob(Y_j > x)}{\Prob(\|Y\|>x)} \in [0, \infty), \quad j \in [d],
    \end{align}
    exist. 
    If $t_j>0$ for all $j \in [d]$, the stable tail dependence function of $\bm Y$ is given by 
    \begin{align*}
        L(\bm x) 
        =
        \int_{\mathbb S_{+}^{d-1}} \max \Big( \frac{\lambda_j^{\alpha} x_j}{t_j} \Big)\, \diff  
        \Phi_{\bm Y}(\bm \lambda), \qquad \bm x \in [0,\infty)^d.
    \end{align*}
\end{lemma}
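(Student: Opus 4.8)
The plan is to prove the two assertions in turn. For the existence of the ratios $t_j$, fix a scaling sequence $(c_n)_n$ with associated exponent measure $\nu_{\bm Y}$, so $n\Prob\{c_n^{-1}\bm Y\in\cdot\}\to\nu_{\bm Y}$ in $\Mb(\Eb_0)$, and set $W_j(x)=\Prob(Y_j>x)$ and $V(x)=\Prob(\|\bm Y\|>x)$. The sets $\{\bm y:y_j>1\}$ and $\{\bm y:\|\bm y\|>1\}$ are bounded away from $\bm 0$ and are $\nu_{\bm Y}$-continuity sets: their boundaries $\{y_j=1\}$ and $\{\|\bm y\|=1\}$ carry no $\nu_{\bm Y}$-mass, because by homogeneity a positive-mass boundary would produce uncountably many disjoint dilates of positive mass inside a set bounded away from $\bm 0$, contradicting finiteness there. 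Hence $nW_j(c_n)\to\kappa_j:=\nu_{\bm Y}(\{y_j>1\})$ and $nV(c_n)\to\varsigma:=\nu_{\bm Y}(\{\|\bm y\|>1\})$, where $\varsigma>0$ (otherwise homogeneity together with $\Eb_0=\bigcup_{m\ge1}\{\|\bm y\|>1/m\}$ forces $\nu_{\bm Y}\equiv0$). Since $(c_n)_n$ is regularly varying of index $1/\alpha$ (property (a) in Section~\ref{subsec:tail-dependence}), $c_{n+1}/c_n\to1$, so for $x\in[c_n,c_{n+1})$ monotonicity gives $W_j(c_{n+1})/V(c_n)\le W_j(x)/V(x)\le W_j(c_n)/V(c_{n+1})$, and both outer terms converge to $\kappa_j/\varsigma$; thus $t_j=\kappa_j/\varsigma\in[0,\infty)$ exists, and is intrinsic since rescaling $\nu_{\bm Y}$ rescales $\kappa_j$ and $\varsigma$ equally. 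In particular $V$ is regularly varying of index $-\alpha$, and so is $W_j$ whenever $t_j>0$.

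Now assume $t_j>0$ for all $j\in[d]$ and fix $\bm x\in(0,\infty)^d$ (the case of some $x_j=0$ is handled at the end). Continuity of $G_j$ gives $1-G_j(y)=W_j(y)$, and the event $\{G_j(Y_j)>1-x_j/n\}$ agrees up to a $\Prob$-null set with $\{Y_j>y_{n,j}\}$, where $y_{n,j}:=\inf\{y:W_j(y)<x_j/n\}$. Since $W_j$ is regularly varying of index $-\alpha$ with $nW_j(c_n)\to\kappa_j=t_j\varsigma$, the inversion property of regularly varying functions yields $y_{n,j}/c_n\to(t_j\varsigma/x_j)^{1/\alpha}=:r_j\in(0,\infty)$. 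Hence $n\Prob\{\exists\,j:G_j(Y_j)>1-x_j/n\}=n\Prob\{c_n^{-1}\bm Y\in A_n\}$ with $A_n=\{\bm y:\exists\,j,\ y_j>y_{n,j}/c_n\}$, and for any $\eps\in(0,1)$ and all large $n$ one has $[\bm 0,(1+\eps)\bm r]^c\subseteq A_n\subseteq[\bm 0,(1-\eps)\bm r]^c$; these bounding sets are bounded away from $\bm 0$ and are $\nu_{\bm Y}$-continuity sets, so the convergence $n\Prob\{c_n^{-1}\bm Y\in\cdot\}\to\nu_{\bm Y}$ together with $\nu_{\bm Y}([\bm 0,(1\pm\eps)\bm r]^c)=(1\pm\eps)^{-\alpha}\nu_{\bm Y}([\bm 0,\bm r]^c)$ gives, upon $\eps\downarrow0$, that $L(\bm x)=\nu_{\bm Y}([\bm 0,\bm r]^c)$.

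The last step converts this to spectral form. Using $\nu_{\bm Y}\circ T^{-1}=\varsigma\,(\nu_\alpha\otimes\Phi_{\bm Y})$ from \eqref{eq:relation-exponent-spectral-measure}, the identity $\{(\rho,\bm\lambda):\exists\,j,\ \rho\lambda_j>r_j\}=\{(\rho,\bm\lambda):\rho>\min_{j}r_j/\lambda_j\}$, the formula $\nu_\alpha((u,\infty))=u^{-\alpha}$, and the substitution $r_j^{-\alpha}=x_j/(t_j\varsigma)$, one obtains
\[
L(\bm x)=\varsigma\int_{\Sb_+^{d-1}}\max_{j\in[d]}\frac{\lambda_j^{\alpha}}{r_j^{\alpha}}\,\diff\Phi_{\bm Y}(\bm\lambda)=\int_{\Sb_+^{d-1}}\max_{j\in[d]}\frac{\lambda_j^{\alpha}x_j}{t_j}\,\diff\Phi_{\bm Y}(\bm\lambda),
\]
which is the claimed identity. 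When $x_j=0$ for some $j$, then $y_{n,j}$ equals the (infinite) right endpoint of the support of $Y_j$, so that coordinate drops out of the union, consistently with the vanishing of the $j$th term in the maximum. I expect the main obstacle to be the two soft-analysis points in the middle paragraph: the inversion asymptotic $y_{n,j}/c_n\to r_j$ and the exchange of the limit with the moving sets $A_n$ in the $\mathrm{vague}^{\#}$ topology, including verification that the relevant boundaries are $\nu_{\bm Y}$-null; both are routine regular-variation arguments but carry essentially all the analytic content.
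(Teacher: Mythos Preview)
Your proof is correct and follows essentially the same route as the paper: identify $t_j=\kappa_j/\varsigma$ via the exponent measure, establish the quantile asymptotic $y_{n,j}/c_n\to(\kappa_j/x_j)^{1/\alpha}$ by inverting the regular variation of $W_j$, pass to the limit to obtain $L(\bm x)=\nu_{\bm Y}([\bm 0,\bm r]^c)$, and then rewrite in spectral form via \eqref{eq:relation-exponent-spectral-measure}. The only cosmetic difference is that the paper cites Proposition~0.1 in \cite{Res07} for the inversion step and Theorem~1(v) in \cite{SchSta06} (local uniform convergence) for passing the moving sets $A_n$ to the limit, whereas you carry out both steps by explicit sandwich arguments; your version is thus more self-contained, though you should note that the sandwich for $t_j$ tacitly assumes $(c_n)_n$ is eventually increasing, which can always be arranged by taking quantiles of $\|\bm Y\|$ as the scaling sequence.
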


\begin{proof}[Proof of Lemma~\ref{lemma:stable-tail-dependence-spectral-measure-old}]
The limit in the definition of $t_j$ exists by regular variation. More precisely, choosing an arbitrary scaling sequence $(c_n)_n$ with associated exponent measure $\nu_{\bm Y}$ such that \eqref{eq:regular_variation} holds, we have
\begin{align} \label{eq:t_j-representation}
t_j 
= 
\frac{\nu_{\bm{Y}}\left( [0,\infty)^{j-1} \times (1, \infty) \times [0,\infty)^{d-j}\right)}{\nu_{\bm{Y}}\left( \{ \bm y \in \Eb_0: \| \bm y \| > 1\}\right)}
= 
\frac{\kappa_j}{\varsigma};
\end{align}
where $\varsigma =\nu_{\bm{Y}}\left( \{ \bm y \in \Eb_0: \| \bm y \| > 1 \}\right)>0$, where
\[
\kappa_j
:=
\nu_{\bm{Y}}\left( [0,\infty)^{j-1} \times (1, \infty) \times [0,\infty)^{d-j}\right) \ge 0,
\]
and where the fraction $\kappa_j / \varsigma$ does not depend on $(c_n)_n$ by property (a) after \eqref{eq:regular_variation}.

Subsequently, assume $t_j>0$ (and hence $\kappa_j>0$) for all $j \in [d]$.
We only prove the assertion regarding the stable tail dependence function $L$ for $\bm x \in(0,\infty)^d$; the remaining points can be treated similarly.
By \eqref{eq:regular_variation} and homogeneity of $\nu_{\bm Y}$, we have, for any $j \in [d]$ and any $x_j>0$,
\begin{align} \label{eq:cdf-inverse-convergence}
    \lim_{n\to\infty}  \frac{1}{n \mathbb{P}\left\{ c_n^{-1} Y_j > x_j \right\}}  
    =
    \frac{1}{\nu_{\bm{Y}}\left( [0,\infty)^{j-1} \times (x_j, \infty) \times [0,\infty)^{d-j}\right)}
    =
    x_j^{\alpha} \kappa_j^{-1}.
\end{align}
Write $Q_j$ for the left-continuous generalized inverse of $1/(1-G_j)$. By Proposition 0.1 in \cite{Res07}, the convergence in \eqref{eq:cdf-inverse-convergence} is equivalent to
\[
    \lim_{n \to \infty}  \frac{Q_j(nx_j)}{c_n} = (x_j\kappa_j)^{1/\alpha}.
\]
By a change of variable, we have shown that, for all $j \in [d]$ and all $x_j>0$,
\[
q_{nj}(x_j) := \frac{Q_j(n/x_j)}{c_n} \to x_j^{-1/\alpha} \kappa_j^{1/\alpha} =: q_j(x_j), \quad n \to \infty.
\]
Then, by eventual continuity of the margins of $\bm{Y}$, we have, for any $\bm x \in(0,\infty)^d$,
\begin{align*}
    L(\bm{x}) &= \underset{n \rightarrow \infty}{\lim} n \mathbb{P}\left\{ G_1(Y_1) > 1-\frac{x_1}{n} \textrm{ or } \dots \textrm{ or } G_d(Y_d) > 1-\frac{x_d}{n} \right\} 
    \\&=
    \underset{n \rightarrow \infty}{\lim} n \mathbb{P}\left\{ Y_1 > Q_1(n/x_1) \textrm{ or } \dots \textrm{ or } Y_d > Q_d(n/x_d) \right\}
    \\&=
    \lim_{n \to \infty} n \Prob \{ c_n^{-1} \bm Y \in [\bm 0, \bm q_n]^c\},
\end{align*}
where $\bm q_n = \bm q_n(\bm x)= (q_{n1}, \dots, q_{nd})^\top$ converges to $\bm q = \bm q(\bm x)=(x_1^{-1/\alpha} \kappa_{1}^{1/\alpha}, \dots, x_d^{-1/\alpha}\kappa_{d}^{1/\alpha})^\top$. 
Since the convergence in the definition of $L$ is locally uniform (see, e.g., Theorem 1(v) in \citealp{SchSta06}), 
we obtain
\begin{align*}
    L(\bm{x}) = 
    \lim_{n \to \infty} n \Prob \{ c_n^{-1} \bm Y \in  [\bm 0, \bm q_n]^c\}
    &=
    \nu_{\bm Y}( [\bm 0, \bm q]^c).
\end{align*}
Finally, by the definition of the spectral measure, by \eqref{eq:relation-exponent-spectral-measure} and by the fact that $\kappa_j = \varsigma t_j$, we obtain
\begin{align*}
        L(\bm x) = \nu_{\bm Y}(K(\bm q(\bm x))) 
        &=
        \nu_{\bm Y} \Big( \Big\{ \bm y: \exists j \in [d] \text{ s.t. } y_j > x_j^{-1/\alpha} \kappa_j^{1/\alpha} \Big\} \Big)
        \\&=
        \nu_{\bm Y} \left(\varsigma^{1/\alpha} \left\{ \bm y: \exists j \in [d] \text{ s.t. } y_j > x_j^{-1/\alpha} t_j^{1/\alpha} \right\} \right)
        \\&=
        \varsigma^{-1} \nu_{\bm Y} \left(\left\{ \bm y: \exists j \in [d] \text{ s.t. } y_j > x_j^{-1/\alpha} t_j^{1/\alpha} \right\} \right)
        \\&=
        (\nu_{\alpha} \otimes \Phi_{\bm Y}) \left(\left\{ (r, \bm \lambda): \exists j \in [d] \text{ s.t. } r \lambda_j > x_j^{-1/\alpha} t_j^{1/\alpha} \right\} \right)
        \\&=
        \int_{\mathbb S_{+}^{d-1}} \max \Big( \frac{\lambda_j^{\alpha} x_j}{t_j} \Big)\, \diff  
        \Phi_{\bm Y}(\bm \lambda)
    \end{align*}
as asserted.
\end{proof}

\begin{lemma}
    \label{lemma:vanishing_noise}
    Let $\bm{X}$ and $\bm{Y}$ be two random vectors with values in $[0,\infty)^d$. Assume $\bm{X}$ is regularly varying with tail index $\alpha>0$, scaling sequence $(c_n)_n$ and associated exponent measure $\nu_{\bm{X}}$. Moreover, assume that 
    $\lim_{n \rightarrow \infty}n\mathbb{P}\left\{ c_n^{-1}\|\bm{Y}\| > a \right\} = 0$ for all $a>0$ 
    (which is a straightforward consequence of regular variation and the assumption $\Prob(\|\bm Y \|>x) / \Prob(\| \bm X\| > x)=o(1)$ as $x \to \infty$). 
    Then $(\bm X, \bm Y)$ is regularly varying  with tail index $\alpha>0$, scaling sequence $(c_n)_n$ and associated exponent measure $\nu_{\bm{X}}  \otimes \delta_{\bm 0}$. As a consequence, both $\bm{X} \vee \bm{Y}$ and $\bm{X} + \bm{Y}$ are regularly varying with tail index $\alpha>0$, scaling sequence $(c_n)_n$ and associated exponent measure $\nu_{\bm{X}}$.
\end{lemma}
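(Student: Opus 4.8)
The plan is to first prove that $(\bm X,\bm Y)$ is regularly varying on $\mathbb{E}_0' := [0,\infty)^{2d}\setminus\{\bm 0\}$ with scaling sequence $(c_n)_n$ and exponent measure $\nu_{\bm X}\otimes\delta_{\bm 0}$, and then to deduce the statements for $\bm X\vee\bm Y$ and $\bm X+\bm Y$ by the continuous mapping theorem for $\mathbb{M}$-convergence (Theorem B.1.21 in \cite{kulik2020heavy}), exactly along the lines of the proof of Proposition~\ref{prop:spectral_measure_max_lin}.

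By the very definition of convergence in $\mathbb{M}(\mathbb{E}_0')$, the first step amounts to showing that $n\Exp[f(c_n^{-1}\bm X,c_n^{-1}\bm Y)]\to\int f\,\diff(\nu_{\bm X}\otimes\delta_{\bm 0})$ for every $f\in\mathcal{C}(\mathbb{E}_0')$. Fix such an $f$ and choose $r>0$ with $\supp f\subseteq\{\|(\bm x,\bm y)\|\ge r\}$, using the norm $\|(\bm x,\bm y)\|:=\|\bm x\|\vee\|\bm y\|$ (WLOG by equivalence of norms). Set $g(\bm x):=f(\bm x,\bm 0)$; then $g\in\mathcal{C}(\mathbb{E}_0)$ (its support is contained in $\{\|\bm x\|\ge r\}$), $\int g\,\diff\nu_{\bm X}=\int f\,\diff(\nu_{\bm X}\otimes\delta_{\bm 0})$, and $n\Exp[g(c_n^{-1}\bm X)]\to\int g\,\diff\nu_{\bm X}$ by regular variation of $\bm X$. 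It thus remains to verify that $n\Exp\big|f(c_n^{-1}\bm X,c_n^{-1}\bm Y)-g(c_n^{-1}\bm X)\big|\to 0$. For $0<\eps<r/2$ and $R>r$ I would split this expectation over the three events $\{\|\bm Y\|>\eps c_n\}$, $\{\|\bm Y\|\le\eps c_n,\ \|\bm X\|>Rc_n\}$ and $\{\|\bm Y\|\le\eps c_n,\ \|\bm X\|\le Rc_n\}$. On the first, bound the integrand by $2\|f\|_\infty$ and use the hypothesis $n\Prob\{\|\bm Y\|>\eps c_n\}\to 0$; on the second, again bound by $2\|f\|_\infty$ and use $n\Prob\{\|\bm X\|>Rc_n\}\to\nu_{\bm X}(\{\|\bm x\|>R\})$ (a $\nu_{\bm X}$-continuity set bounded away from $\bm 0$, with $\nu_{\bm X}(\{\|\bm x\|=R\})=0$ by homogeneity); on the third, observe that if the integrand does not vanish then at least one of $(c_n^{-1}\bm X,c_n^{-1}\bm Y)$ and $(c_n^{-1}\bm X,\bm 0)$ lies in $\supp f$, which together with $\|c_n^{-1}\bm Y\|\le\eps<r/2$ forces $\|c_n^{-1}\bm X\|\ge r/2$, so that $(c_n^{-1}\bm X,c_n^{-1}\bm Y)$ lies in the compact set $\{\,r/2\le\|\bm x\|\le R,\ \|\bm y\|\le\eps\,\}\subset\mathbb{E}_0'$ on which $f$ is uniformly continuous; hence this contribution is at most $\omega_f(\eps)\,n\Prob\{\|\bm X\|\ge (r/2)c_n\}$, with $\omega_f(\eps)\downarrow 0$ as $\eps\downarrow 0$. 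Taking $\limsup_n$, then letting $R\to\infty$ and $\eps\downarrow 0$, all three contributions vanish. This gives the asserted joint convergence; non-degeneracy of $\nu_{\bm X}\otimes\delta_{\bm 0}$ on $\mathbb{E}_0'$ and its $(-\alpha)$-homogeneity (hence tail index $\alpha$, with the same scaling sequence) are inherited from $\nu_{\bm X}$.

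For the consequences, I would apply Theorem B.1.21 in \cite{kulik2020heavy} to the maps $h_\vee,h_+:[0,\infty)^{2d}\to[0,\infty)^d$ given by $h_\vee(\bm x,\bm y)=\bm x\vee\bm y$ and $h_+(\bm x,\bm y)=\bm x+\bm y$: both are continuous and positively homogeneous of degree $1$, so a set is bounded away from $\bm 0$ if and only if its preimage is, and the theorem yields that $\bm X\vee\bm Y$ and $\bm X+\bm Y$ are regularly varying with scaling sequence $(c_n)_n$ and exponent measures $(\nu_{\bm X}\otimes\delta_{\bm 0})\circ h_\vee^{-1}$ and $(\nu_{\bm X}\otimes\delta_{\bm 0})\circ h_+^{-1}$. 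Since $\nu_{\bm X}\otimes\delta_{\bm 0}$ is concentrated on $\{\bm y=\bm 0\}$ and $h_\vee(\bm x,\bm 0)=h_+(\bm x,\bm 0)=\bm x$, both pushforwards equal $\nu_{\bm X}$, which finishes the proof.

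The main obstacle is the third region in the splitting above: there one cannot use the negligibility of $\bm Y$ directly, and must instead combine the fact that $f$ has support bounded away from zero (to keep the relevant arguments away from the origin) with an upper truncation of $\|\bm X\|$ at level $Rc_n$ (to keep them inside a compact set) before invoking uniform continuity of $f$; the truncation costs only a term of order $\nu_{\bm X}(\{\|\bm x\|>R\})$, which is sent to zero at the end. The remaining steps are routine manipulations with $\mathbb{M}$-convergence.
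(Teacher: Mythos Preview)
Your proof is correct and follows the same strategy as the paper: establish joint regular variation of $(\bm X,\bm Y)$ with exponent measure $\nu_{\bm X}\otimes\delta_{\bm 0}$ by testing against functions with support bounded away from the origin, then push forward via the mapping theorem (Theorem~B.1.21 in \cite{kulik2020heavy}) applied to the continuous homogeneous maps $h_\vee$ and $h_+$. The only difference is in bookkeeping: the paper restricts to \emph{Lipschitz} test functions (Theorem~B.1.17 in \cite{kulik2020heavy}), which yields the direct bound $|f(\bm x,\bm y)-f(\bm x,\bm 0)|\le L\|\bm y\|_\infty$ on $\{\|\bm Y\|\le\eps c_n\}$ and thereby dispenses with your upper truncation $\|\bm X\|\le Rc_n$ and the compactness/uniform-continuity step; your three-region split works as well, provided you send $\eps\downarrow 0$ before (or jointly with) $R\to\infty$, since your modulus $\omega_f(\eps)$ is really $\omega_f(\eps;R)$ through the compact set on which uniform continuity is invoked.
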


\begin{proof}
Write $\nu_n = n \Prob(c_n^{-1}(\bm X, \bm Y) \in \cdot)$. 
We need to show that $\nu_n \to \nu: = \nu_{\bm X} \otimes \delta_{\bm 0}$ in $\mathbb M([0,\infty)^{2d} \setminus \{\bm 0\})$.
By Theorem B.1.17 in \cite{kulik2020heavy}, it is sufficient to show that $\nu_nf \to \nu f$ for all $f: [0,\infty)^{2d} \setminus \{\bm 0\} \to [0, \infty)$ that are Lipschitz continuous with respect to the metric induced by the maximum norm on $[0,\infty)^{2d}\setminus \{\bm 0\}$ (which is \textit{compatible}) and have support bounded away from zero; note that $\nu f = \int_{\Eb_0} f(\bm x, \bm 0) \, \nu_{\bm X}(\diff \bm x)$. 
For that purpose, note that at least one of the following two cases applies: either, the support of $f$ is contained in $A \times [0, \infty)^d$ with $A$ bounded away from zero, or it is contained in $[0, \infty)^d \times B$ with $B$ bounded away from zero. In the latter case, we have $\nu f=0$. Moreover, since $B$ is bounded away from zero, there exists $\kappa>0$ such that $f(\bm x, \bm y)=0$ for all $\bm y$ with $\|\bm y\|_\infty\le \kappa$. As a consequence,
\begin{align*} 
    \nu_n f 
    = 
    n \Exp[ f(c_n^{-1}\bm X, c_n^{-1}\bm Y) ]
    &=
    n \Exp[  f(c_n^{-1}\bm X, c_n^{-1}\bm Y)  \bm 1( c_n^{-1}\| \bm Y \|_\infty > \kappa) ]
    \\&\le
    \sup_{\bm z \in [0,\infty)^{2d} \setminus \{\bm 0\}} f(\bm z) \times  n \Prob(c_n^{-1} \|\bm Y\|_\infty>\kappa)
\end{align*}
which converges to $0=\nu f$ by assumption and equivalence of norms.

Next, consider the case where the support of $f$ is contained in $A \times [0, \infty)^d$ with $A$ bounded away from zero. In that case, we may decompose
\begin{align*}
    \nu_n f 
    &= 
    n \Exp[ f(c_n^{-1}\bm X, c_n^{-1}\bm Y) ]
    \\&=
    n \Exp[f(c_n^{-1}\bm X, \bm 0) ]
    +
    n \Exp[ f(c_n^{-1}\bm X, c_n^{-1}\bm Y) - f(c_n^{-1}\bm X, \bm 0) ] 
    \equiv 
    E_n + R_n.
\end{align*}
Since $\bm x \mapsto f(\bm x, \bm 0)$ is continuous and has support bounded away from zero, the summand $E_{n}$ converges to $\int f(\bm x, \bm 0) \, \nu_{\bm X}(\diff \bm x)=\nu f$ by regular variation of $\bm X$. It remains to show that $R_n$ converges to zero. For that purpose, let $\eps>0$, and decompose
\begin{align*}
    R_n
    &=
    n \Exp[ \{ f(c_n^{-1}\bm X, c_n^{-1}\bm Y) - f(c_n^{-1}\bm X, \bm 0) \}  \bm 1( c_n^{-1}\| \bm Y \|_\infty \le \eps) ] 
    \\& \hspace{2cm}+ 
    n \Exp[ \{ f(c_n^{-1}\bm X, c_n^{-1}\bm Y) - f(c_n^{-1}\bm X, \bm 0) \}  \bm 1( c_n^{-1}\| \bm Y \|_\infty > \eps) ]
    \equiv R_{n1} + R_{n2}.
\end{align*}
Since $f$ is bounded, the second expectation on the right-hand side can be bounded by
\[
R_{n2} \le 2 \sup_{\bm z \in [0,\infty)^{2d} \setminus \{\bm 0\}} f(\bm z) \times  n \Prob(c_n^{-1} \|\bm Y\|_\infty>\eps),
\]
which converges to zero by assumption and equivalence of norms (for any fixed $\eps>0$). 
Regarding $R_{n1}$, writing $g(\bm x) = \sup_{\|\bm y \|_\infty \le \eps} |f(\bm x, \bm y) - f(\bm x, \bm 0)|$, we have
\[
|R_{n1}| \le n \Exp[ g(c_n^{-1}\bm X)].
\]
We will show below that $g(\bm x) \le L \eps \times \bm 1(\bm x \in A)$ where $L>0$ is the Lipschitz constant of $f$. As a consequence, $|R_{n1}| \le L \eps \times n \Prob\{ c_n^{-1} \bm X \in A\} \le L \eps \times \nu_{\bm X}(\overline A) + \eps$ for all sufficiently large $n$ by the Portmanteau theorem (Theorem B.1.17 in \citealp{kulik2020heavy}), which can be made arbitrary small by decreasing $\eps$. It remains to show the bound on $g$. By the assumption on $f$, we have $g(\bm x)=0$ for $\bm x\in A^c$, and otherwise, 
for all $\| \bm y \|_\infty \le \eps$, Lipschitz continuity of $f$ (with Lipschitz constant $L$) implies
\begin{align*}
|f(\bm x, \bm y) - f(\bm x, \bm0)|
&\le
L \| (\bm 0, \bm y)\|_\infty 
\le L \eps.
\end{align*}
This yields $g(\bm x) \le L\eps  \times \bm 1(\bm x \in A)$ as asserted.

Finally, the assertions regrading $(\bm X \vee \bm Y)$ and $(\bm X + \bm Y)$ are straightforward consequences of the mapping theorem, see Theorem B.1.21 in \cite{kulik2020heavy}.
\end{proof}

\section{Additional simulation results}
\label{sec:additional-simulation-results}

Figure~\ref{fig:result_sum_noise} contained simulation results for the sum-linear model with noise only. Similar results were obtained for the sum-linear model without noise (Figure \ref{fig:result_sum_no_noise}), the max-linear model without noise (Figure~\ref{fig:result_max_no_noise}) and the max-linear model with noise (Figure~\ref{fig:result_max_noise}).

\begin{figure}[thp!]
    \centering
    \includegraphics[scale=0.37]{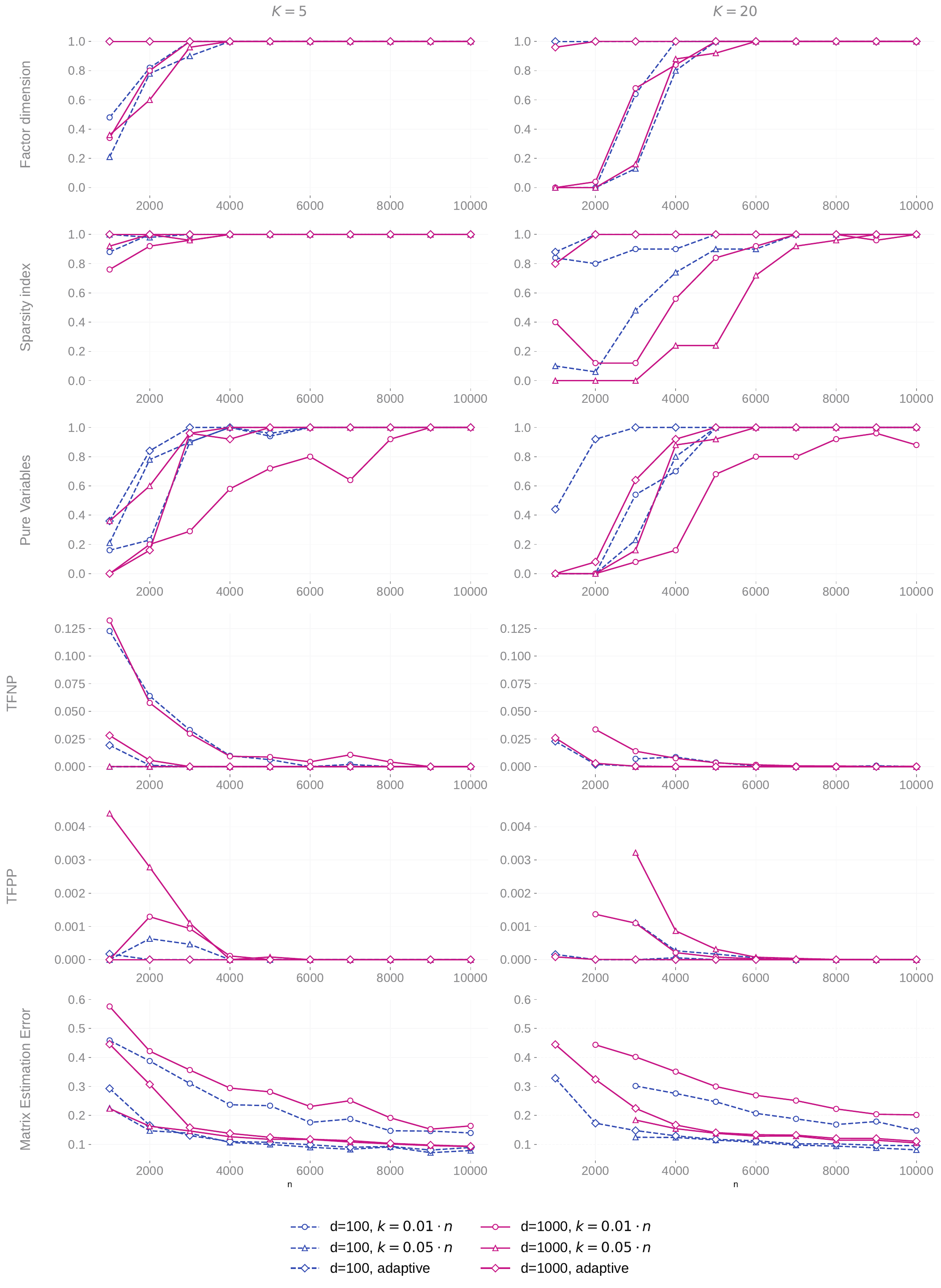}
    \caption{Performance metrics for the \textbf{linear model without noise} across different parameter combinations. Each rows depicts in order: (1) recovery rate of latent factors, 
        (2) recovery rate of sparsity, (3) recovery rate of pure variables, 
        (4) TFNP, (5) TFPP, and (6) matrix estimation Error. Each metric is plotted as a function of sample size $n$, 
        comparing dimensions $d \in \{100, 1000\}$, and the three choices of $(k,\hyperpurevar, \hypersparsity)$ described in the main text. Results are stratified by $K=5$ (left column) and $K=20$ (right column).}
    \label{fig:result_sum_no_noise}
\end{figure}

\begin{figure}[thp!]
    \centering
    \includegraphics[scale=0.37]{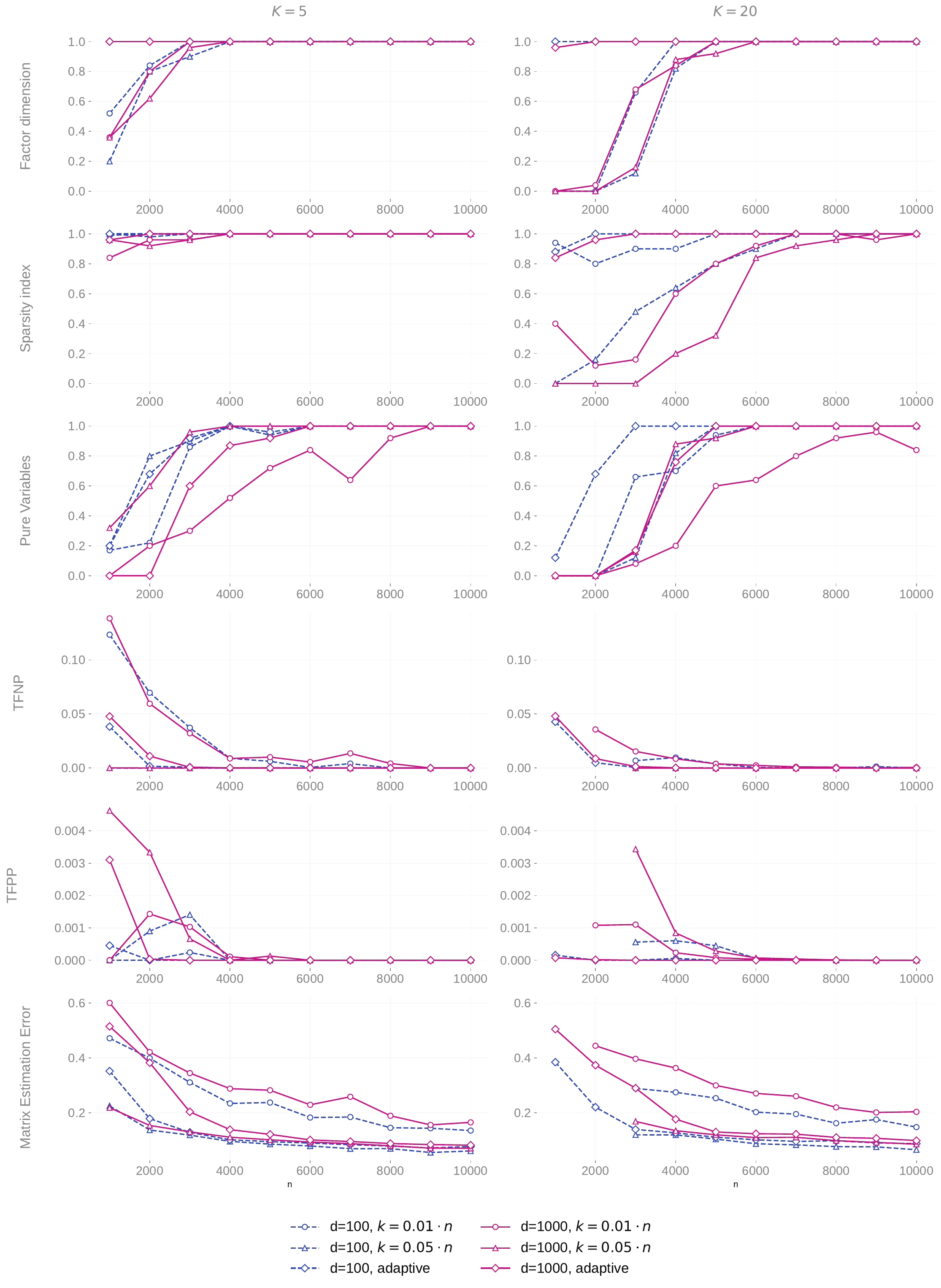}
    \caption{Same as Figure~\ref{fig:result_sum_no_noise}, but for the \textbf{max-linear model without noise}.}
    \label{fig:result_max_no_noise}
\end{figure}

\begin{figure}[thp!]
    \centering
    \includegraphics[scale=0.37]{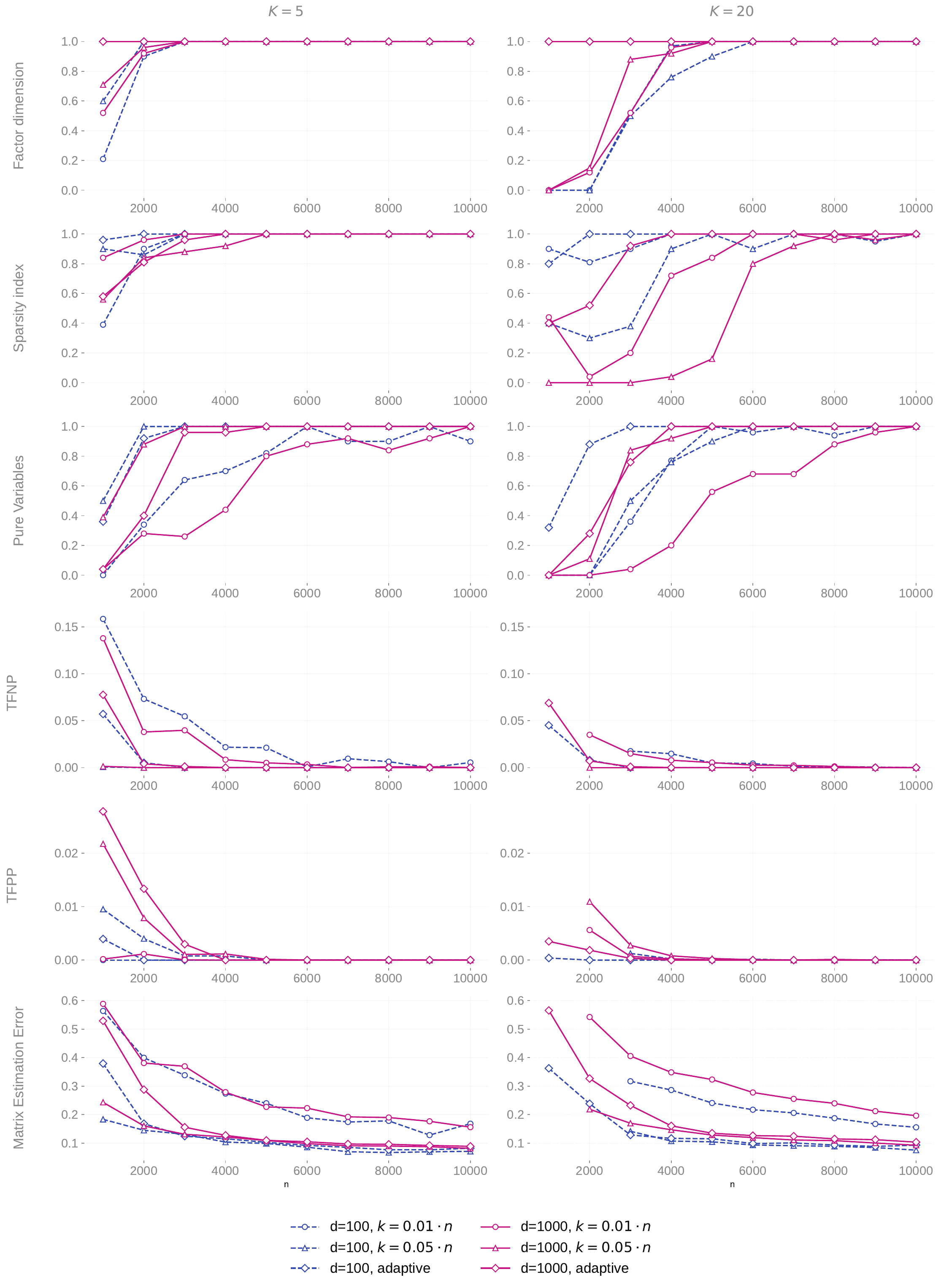}
    \caption{Same as Figure~\ref{fig:result_sum_no_noise}, but for the \textbf{max-linear model with noise}.}
    \label{fig:result_max_noise}
\end{figure}

\section{Additional results for the case study on dietary data}
\label{sec:supplement_case_studies}

Our method is applied to the full $d=39$ dimensional dietary intakes dataset introduced in Section~\ref{sec:case_studies}. We maintain the same $k$ value while fine-tuning $\hyperpurevar$ and $\hypersparsity$ following the procedure outlined in that section which results in $\hyperpurevar^* = 0.1449$ and $\hypersparsity^*=0.035$ with $\hat{K} = \hat{s} = 7$. Figure \ref{fig:Chi_fitted_vs_Chi_colored_full_dietary} compares the empirical extremal correlation to the corresponding extremal correlations implied by the fitted model. Two patterns emerge: (1) \emph{Pure-Pure} and \emph{Pure-Impure} pairs demonstrate excellent agreement, clustering tightly around the identity line, confirming that our method reliably captures dependencies between those pairs. (2) As in the case study on wind speed gusts from Section~\ref{sec:case-study-wind-speed}, \emph{Impure-Impure} pairs exhibit a poor fit, with values scattered broadly below from the identity line. An explanation is provided in Section~\ref{sec:case-study-wind-speed}.

\begin{figure}
    \centering
    \includegraphics[width=0.48\linewidth]{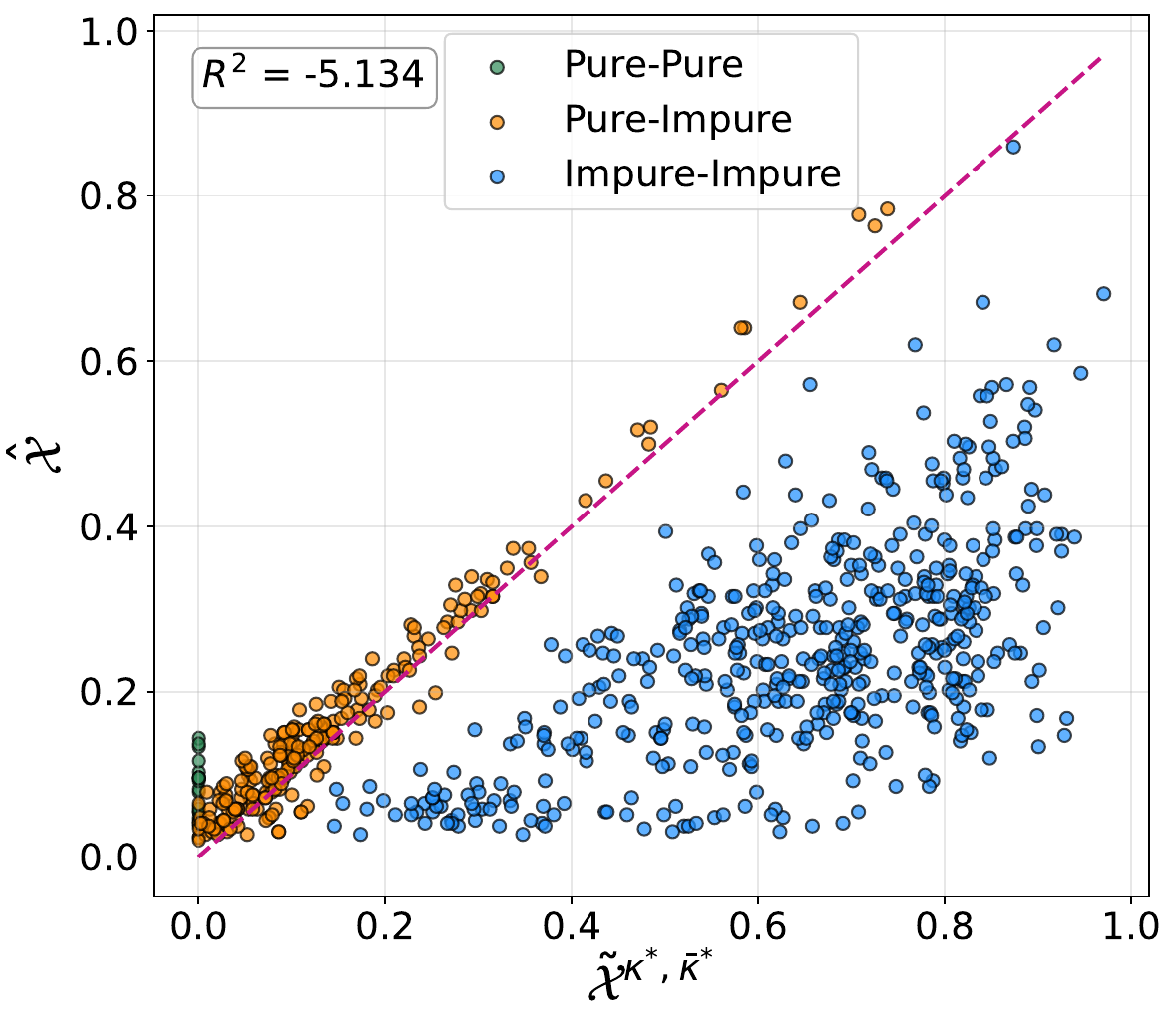}
    \caption{Results for the dietary data set with $d=39$: empirical correlations from $\hat{\mathcal{X}}$ vs. fitted extremal correlations from $\tilde{\mathcal{X}}^{\hyperpurevar^*, \hypersparsity^*}$.}
    \label{fig:Chi_fitted_vs_Chi_colored_full_dietary}
\end{figure}

\section*{Acknowledgements}
The authors are grateful to two unknown referees and an associate editor for their constructive comments that helped to improve the presentation substantially.

\section*{Declarations}

\begin{itemize}
\item\textbf{Ethical Approval.} Not applicable.
\item\textbf{Availability of supporting data. } Code for the simulation study and case studies are available in this Github repository \url{https://github.com/Aleboul/structured_linear_factor_models}. The datasets used in Section~\ref{sec:case_studies} are also publicly available on the websites mentioned in the main text.
\item\textbf{Competing interests.} The authors declare that they have no conflict of interest.
\item\textbf{Funding.} Both authors were supported by the Deutsche For\-schungsgemeinschaft (DFG, German Research Foundation; Project-ID 520388526;  TRR 391:  Spatio-temporal Statistics for the Transition of Energy and Transport) which is gratefully acknowledged. Calculations for this publication were performed on the HPC cluster Elysium of the Ruhr University Bochum, subsidised by the DFG (INST 213/1055-1).
\item\textbf{Authors' contributions.} 
A.\ Boulin conceived the idea for the study, developed the initial draft of the manuscript, and implemented the methods.
A.\ Bücher provided critical revisions to the manuscript and improved the content. 
Both authors contributed to the final version of the manuscript.
\end{itemize}

\bibliographystyle{apalike}
\bibliography{biblio}

\end{document}